\newtheorem{theorem}{Theorem}[section]
\newtheorem{prop}[theorem]{Proposition}
\newtheorem{lemma}[theorem]{Lemma}
\newtheorem{defn}[theorem]{Definition}
\newtheorem{corollary}[theorem]{Corollary}
\theoremstyle{remark}
\newtheorem{remark}[theorem]{Remark}
\newcommand{\R}{\mathbb{R}}
\newcommand{\C}{\mathbb{C}}
\newcommand{\Z}{\mathbb{Z}}
\newcommand{\Q}{\mathbb{Q}}
\newcommand{\cp}{\mathbb{C}\mathbb{P}^2}
\newcommand{\cpbar}{\overline{\mathbb{C}\mathbb{P}}^2}
\newcommand{\xin}{\bar{x}}
\newcommand{\yin}{\bar{y}}
\newcommand{\zin}{\bar{z}}
\title[Definite lattices bounded by integer surgeries on knots with small slice genus]{On definite lattices bounded by integer surgeries along knots with slice genus at most 2}
\author{Marco Golla}
\email{marco.golla@univ-nantes.fr}
\address{CNRS, Laboratoire de Math\'ematiques Jean Leray, Nantes, France}
\thanks{MG acknowledges support from CNRS though a ``Jeunes chercheurs et jeunes chercheuses'' grant, and hospitality from the Simons Center for Geometry and Physics.}
\author{Christopher Scaduto}
\email{cscaduto@scgp.stonybrook.edu}
\address{Simons Center for Geometry and Physics, Stony Brook, NY, U.S.A.}
\thanks{CS was supported by NSF grant DMS-1503100.}
\date{}
\begin{document}

\maketitle 

\begin{abstract}
We classify the positive definite intersection forms that arise from smooth 4-manifolds with torsion-free homology bounded by positive integer surgeries on the right-handed trefoil.
A similar, slightly less complete classification is given for the $(2,5)$-torus knot, and analogous results are obtained for integer surgeries on knots of slice genus at most two.
The proofs use input from Yang--Mills instanton gauge theory, Heegaard Floer correction terms, and the topology of singular complex plane curves.
\end{abstract}


\section{Introduction}\label{sec:intro}

A {\emph{lattice}} is a free abelian group of finite rank equipped with a symmetric bilinear form. A lattice is {\emph{integral}} if the pairing has values in $\Z$. Let $Y$ be a rational homology 3-sphere, that is, a closed, oriented and connected 3-manifold with $b_1(Y)=0$.
Let $X$ be a smooth, compact and oriented 4-manifold bounded by $Y$ with no torsion in its homology.
The intersection of 2-dimensional cycles in $X$ induces the structure of an integral lattice on $H_2(X;\Z)$, which we denote by $L_X$, the {\emph{intersection form}} of $X$. This situation is summarized in:
\begin{defn}
	We say a rational homology $3$-sphere $Y$ {\emph{bounds}} a lattice $L$ and that $L$ {\emph{fills}} $Y$ if $L=L_X$ for a smooth, compact, oriented $4$-manifold $X$ with no torsion in its homology and $\partial X =Y$.\label{def:filling}
\end{defn}

What are the possible {\emph{definite}} fillings of a fixed rational homology 3-sphere $Y$? When $Y$ is the 3-sphere, Donaldson's Theorem \cite{d-connections} says any such filling is diagonal. When $Y$ is the Poincar\'{e} sphere, Fr\o yshov's Theorem \cite{froyshov-thesis} says the only such non-diagonal fillings are $-E_8\oplus \langle - 1\rangle^k$. These theorems are reproved in \cite{scaduto-forms}, where further results in this direction are provided, all for {\emph{integer}} homology 3-spheres. The purpose of the current article is to provide analogous results for when $Y$ has non-trivial first homology, and in particular when $Y$ is a positive, integral surgery along a knot. For a knot $K$ in the 3-sphere, we write $S^3_n(K)$ for the 3-manifold obtained as $n$-surgery along $K$.

\begin{theorem}\label{thm:trefoil}
{\emph{(i)}} Let $K\subset S^3$ be a knot of slice genus at most $1$, and $n\in \mathbb{Z}_{> 0}$. If a positive definite lattice fills $S_n^3(K)$, then it is isomorphic to one of the following, for some $k\geqslant 0$:
    \begin{align*}
        n=1: \qquad& \;\, \phantom{\langle 1 \rangle \oplus} \langle 1 \rangle^k  \qquad \text{or} \qquad E_8\oplus \langle 1 \rangle^k \qquad\\
        n=2: \qquad& \langle 2 \rangle \oplus \langle 1 \rangle^k  \qquad \text{or} \qquad E_7\oplus \langle 1 \rangle^k \qquad\\
        n=3: \qquad& \langle 3 \rangle \oplus \langle 1 \rangle^k  \qquad \text{or} \qquad E_6\oplus \langle 1 \rangle^k \qquad\\
        n=4:\qquad & \langle 4 \rangle \oplus \langle 1 \rangle^k  \qquad \text{or} \qquad D_5\oplus \langle 1 \rangle^k \qquad\\
        n=5: \qquad& \langle 5 \rangle \oplus \langle 1 \rangle^k  \qquad \text{or} \qquad A_4\oplus \langle 1 \rangle^k \qquad\\
        n = 6: \qquad& \langle 6 \rangle \oplus \langle 1 \rangle^k   \qquad \text{or} \qquad A_1\oplus A_2\oplus \langle 1 \rangle^k \qquad\\
        n = 7: \qquad& \langle 7 \rangle \oplus \langle 1 \rangle^k   \qquad \text{or} \qquad \Lambda(2,4)\oplus \langle 1 \rangle^k \qquad\\
        n\geqslant 8: \qquad& \langle n \rangle \oplus \langle 1 \rangle^k 
    \end{align*}
    {\emph{(ii)}} Furthermore, when the knot is the right-handed trefoil, all of these possibilities are realized, except for the trivial lattice.
\end{theorem}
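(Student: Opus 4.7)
The plan is to reduce the filling problem to a lattice embedding problem controlled by Heegaard Floer correction terms, and then classify the resulting lattices by combinatorial analysis, invoking instanton Floer theory where correction terms alone do not suffice. First I would compute the Heegaard Floer $d$-invariants of $Y = S_n^3(K)$. The slice-genus-one hypothesis forces $V_0(K) \in \{0,1\}$ and $V_i(K) = 0$ for $i \geq 1$, so the surgery formula gives that the $d$-invariants of $S_n^3(K)$ differ from those of the lens space $S_n^3(U)$ by at most $2$, and only at the spin-c structure $\mathfrak{s}_0$. Hence the profile of $d$-invariants of $Y$ takes one of at most two possible forms.

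Given a positive definite filling $X$ of $Y$, I would reverse orientation to obtain a negative definite filling $-X$ of $-Y$, and invoke the Ozsv\'ath--Szab\'o correction-term inequality. For each spin-c structure $\mathfrak{s}_i$ on $-Y$, parametrized by $i\in\Z/n$, this yields a characteristic covector $\xi_i \in L_X^*$ whose square is controlled by $d(Y, \mathfrak{s}_i)$ and $b_2(X)$. Inside the closed 4-manifold $Z = X \cup_Y (-W_n(K))$, which has intersection form $L_X \oplus \langle -n \rangle$ and whose $\langle -n\rangle$-summand is represented by the torus obtained by capping off a genus-one slice surface for $K$ with the core of the 2-handle, these covectors become a prescribed family of short characteristic vectors. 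Combined with a Donaldson-type lattice embedding into $\langle -1 \rangle^N$ (after suitable modification to restore definiteness), one obtains a concrete lattice embedding problem. Yang--Mills instanton input, following \cite{scaduto-forms}, is then required to bound the rank of the non-diagonal indecomposable summand of $L_X$ and to rule out configurations not obstructed by correction terms alone.

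The main obstacle is the combinatorial classification of lattices admitting the prescribed family of short characteristic covectors. For $n \geq 8$ the two possible $d$-invariant profiles both pin the covectors to a single distinguished norm-$n$ vector; splitting it off and applying the embedding constraint forces the orthogonal complement to be diagonal, yielding $L_X \cong \langle n \rangle \oplus \langle 1 \rangle^k$. For $n \leq 7$ the shifted profile (which occurs when $V_0(K) = 1$) permits a second configuration whose lattice envelope is forced to be one of the indecomposable summands $E_8, E_7, E_6, D_5, A_4, A_1 \oplus A_2, \Lambda(2,4)$ for $n = 1, 2, \ldots, 7$ respectively. Excluding all other candidate indecomposable summands requires a delicate case-by-case orbit analysis of characteristic vectors in each rank, and this is where I expect the bulk of the technical work to lie.

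For part (ii), the realization for the right-handed trefoil $T_{2,3}$: the diagonal family $\langle n \rangle \oplus \langle 1 \rangle^k$ is given by the 2-handle trace $W_n(T_{2,3})$ blown up $k$ times. The exceptional fillings are realized via classical Seifert-fibered plumbings; for instance $-S_1^3(T_{2,3}) = \Sigma(2,3,5)$ bounds the $-E_8$-plumbing, so reversing orientation produces the positive $E_8$-filling, and analogous plumbings realize the remaining cases. The $\langle 1 \rangle^k$-stabilizations are obtained by blowing up.
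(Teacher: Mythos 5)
Your part (ii) is essentially the paper's own construction (Seifert-fibered plumbings realizing the $\mathscr{T}_n$, then blow-ups for the $\langle 1\rangle^k$ stabilizations), and your computation of the $d$-invariant profile of $S^3_n(K)$ via Rasmussen's bound on the $V_i$ and the Ni--Wu surgery formula is correct. But your plan for part (i) has two genuine gaps. First, you propose to invoke the instanton obstructions of~\cite{scaduto-forms} directly on a filling of $S^3_n(K)$ ``to bound the rank of the non-diagonal indecomposable summand of $L_X$''. Those obstructions are only available when the boundary is an integer homology sphere, i.e.\ for $n=1$; the paper stresses that the relevant instanton theory is undeveloped when $H_1\neq 0$, which is exactly why Theorem~\ref{thm:2,5} remains incomplete. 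The paper's way around this is the inductive use of the $2$-handle cobordism $W_n\colon S^3_n(K)\to S^3_{n-1}(K)$: any positive definite filling $L$ of $S^3_n(K)$ is exhibited as $v^\perp\subset M$ for a vector $v$ of square $n(n-1)$ in a filling $M$ of $S^3_{n-1}(K)$ (Lemma~\ref{lemma:cobordism}), so the instanton input is consumed exactly once, at $n=1$, and the rest is orthogonal-complement arithmetic (Lemmas~\ref{lemma:littlelattice} and~\ref{lemma:distinguished}), with correction terms needed only to kill $E_8\oplus\langle 2\rangle\oplus\langle 1\rangle^{k-2}$ at $n=2$. Your sketch contains no mechanism that plays this role.

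Second, without that inductive embedding, your ``combinatorial classification of lattices admitting the prescribed family of short characteristic covectors'' has no termination mechanism: the correction-term inequality only gives $\delta(L)\leqslant\delta(S^3_n(K))$, and for determinant $n\geqslant 2$ there is no Elkies-type theorem bounding the rank of the reduced part of a definite lattice satisfying such a bound. Already at $n=1$ the bound $\delta(L)\leqslant 8$ admits $\Gamma_{12}$, $E_7^2$, $A_{15}$ and the other long-shadow lattices, which is precisely what the instanton theorem is needed to exclude. Moreover, your closed manifold $Z=X\cup(-X_n(K))$ has $b^+=b_2(X)$ and $b^-=1$, so it is indefinite and Donaldson's theorem does not apply to it; realizing $L_X$ as the complement of a square-$(-n)$ class in ${\rm I}_{N,1}$ imposes essentially no constraint --- indeed, this is exactly how the paper \emph{constructs} its non-diagonal fillings in Section~\ref{sec:constructions}. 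For $n\geqslant 8$ your Owens--Strle-style argument does go through (the paper notes this as an alternative route), but for $2\leqslant n\leqslant 7$ the step you defer as ``delicate case-by-case orbit analysis'' is the heart of the matter and is not supplied.
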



Here we write $A_n$ ($n\geqslant 1$), $D_n$ ($n\geqslant 4$), and $E_6$, $E_7$, $E_8$ for the positive definite root lattices corresponding to Dynkin diagrams of type ADE, and $\langle n \rangle$ for the rank-1 lattice generated by a vector $v$ with $v\cdot v = v^2 = n$. Note that $A_1=\langle 2\rangle$. We write $\Lambda(a_1,\ldots,a_n)$ for the linearly plumbed lattice with basis $e_1,\ldots,e_n$ whose only nonzero pairings are $e_i\cdot e_{i+1}=-1$ for $1\leqslant i \leqslant n-1$ and $e_i\cdot e_i=a_i$ for $1\leqslant i \leqslant n$. When $a_1=\cdots = a_n=2$ we recover $A_n$.

The lattices in the right-hand column of Theorem~\ref{thm:trefoil} are obtained by starting with a root diagram for $E_8$ and successively plucking off the last root from the end of the long leg, as in Figure~\ref{f:Tns}. The exception to this procedure is passing from $A_1\oplus A_2$ to $\Lambda(2,4)$.

\begin{figure}[h]
\begin{center}
\begin{tikzpicture}[scale=.5]
	\draw (0,0) -- (0,6);
	\draw (-1,4) -- (0,4);
	
	\draw[fill=black] (0,0) circle(.1);
	\draw[fill=black] (0,1) circle(.1);
	\draw[fill=black] (0,2) circle(.1);
	\draw[fill=black] (0,3) circle(.1);
	\draw[fill=black] (0,4) circle(.1);
	\draw[fill=black] (0,5) circle(.1);
	\draw[fill=black] (0,6) circle(.1);
	\draw[fill=black] (-1,4) circle(.1);

    \draw (3,1) -- (3,6);
	\draw (2,4) -- (3,4);
	
	\draw[fill=black] (3,1) circle(.1);
	\draw[fill=black] (3,2) circle(.1);
	\draw[fill=black] (3,3) circle(.1);
	\draw[fill=black] (3,4) circle(.1);
	\draw[fill=black] (3,5) circle(.1);
	\draw[fill=black] (3,6) circle(.1);
	\draw[fill=black] (2,4) circle(.1);
	
	\draw (6,2) -- (6,6);
	\draw (5,4) -- (6,4);
	
	\draw[fill=black] (6,2) circle(.1);
	\draw[fill=black] (6,3) circle(.1);
	\draw[fill=black] (6,4) circle(.1);
	\draw[fill=black] (6,5) circle(.1);
	\draw[fill=black] (6,6) circle(.1);
	\draw[fill=black] (5,4) circle(.1);
	
	\draw (9,3) -- (9,6);
	\draw (8,4) -- (9,4);
	
	\draw[fill=black] (9,3) circle(.1);
	\draw[fill=black] (9,4) circle(.1);
	\draw[fill=black] (9,5) circle(.1);
	\draw[fill=black] (9,6) circle(.1);
	\draw[fill=black] (8,4) circle(.1);
	
	\draw (12,4) -- (12,6);
	\draw (11,4) -- (12,4);
	
	\draw[fill=black] (12,4) circle(.1);
	\draw[fill=black] (12,5) circle(.1);
	\draw[fill=black] (12,6) circle(.1);
	\draw[fill=black] (11,4) circle(.1);
	
	\draw (15,5) -- (15,6);
	
	\draw[fill=black] (15,5) circle(.1);
	\draw[fill=black] (15,6) circle(.1);
	\draw[fill=black] (14,4) circle(.1);

	\draw (18,6) -- (18,5);

	\draw[fill=black] (18,6) circle(.1);
	\draw[fill=black] (18,5) circle(.1);
	
	\node at (0,7) {$\mathscr{T}_1$};
	\node at (0,-1) {$E_8$};
	\node at (3,7) {$\mathscr{T}_2$};
	\node at (3,0) {$E_7$};
	\node at (6,7) {$\mathscr{T}_3$};
	\node at (6,1) {$E_6$};
	\node at (9,7) {$\mathscr{T}_4$};
	\node at (9,2) {$D_5$};
	\node at (12,7) {$\mathscr{T}_5$};
	\node at (12,3) {$A_4$};
	\node at (15,7) {$\mathscr{T}_6$};
	\node at (15,3) {$A_1\oplus A_2$};
	\node at (18,7) {$\mathscr{T}_7$};
	\node at (17.5,5) {$4$};
	\node at (18,4) {$\Lambda(2,4)$};
\end{tikzpicture}
\end{center}
\caption{The trefoil lattices $\mathscr{T}_n$.}\label{f:Tns}
\end{figure}
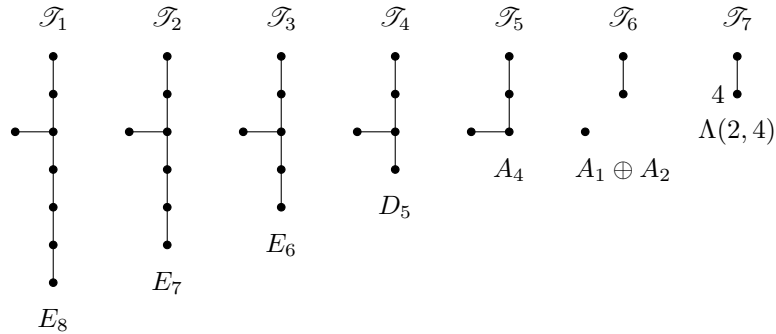

A description of this process in terms of successively taking orthogonal complements of vectors of squared norm $n(n+1)$ is given in Section~\ref{sec:lattices}. We next provide a partial analogue of Theorem~\ref{thm:trefoil} for knots of slice genus at most 2, building on the main result of~\cite{scaduto-forms}.

\begin{theorem}\label{thm:2,5}
{\emph{(i)}} Let $K\subset S^3$ be a knot of slice genus at most $2$, and $n\in\mathbb{Z}_{> 0}$. If a positive definite lattice fills $S_n^3(K)$, then it is either isomorphic to a lattice in Theorem~\ref{thm:trefoil} (i), or to one of the following lattices, for some $k\geqslant 0$:
    \[
        \mathscr{C}_{n}\oplus \langle 1\rangle^k \;\; (1\leqslant n \leqslant 11),   \qquad E_8 \oplus \langle 2 \rangle \oplus \langle 1\rangle \oplus \langle 1 \rangle^k  \;\; (n=2), \qquad E_8\oplus \langle 3\rangle \oplus \langle 1 \rangle^k \;\;(n=3),
    \]
or possibly $\Gamma_{12}\oplus \langle n \rangle \oplus \langle 1 \rangle^k$ if $n\in\{2,3\}$. Here $\mathscr{C}_n$ for $1\leqslant n\leqslant 11$ are the lattices defined by the plumbing diagrams in Figure~\ref{f:Cns} below.

    \noindent {\emph{(ii)}} Furthermore, when the knot is the right-handed cinquefoil, i.e. the positive $(2,5)$-torus knot, all of these possibilities are realized, except possibly $\Gamma_{12}\oplus \langle n \rangle \oplus \langle 1 \rangle^k$ where $n\in\{2,3\}$.
    \end{theorem}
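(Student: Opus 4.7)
The plan is to follow the pattern of the proof of Theorem~\ref{thm:trefoil}, with one added layer of stabilization to accommodate the increased genus. Let $X$ be a hypothetical positive definite filling of $Y=S_n^3(K)$ with $L_X=L$. A genus-$2$ slice surface $\Sigma\subset B^4$ for $K$ caps off, through the core of the 2-handle, to a closed genus-$2$ surface $\widehat\Sigma\subset X_n(K)$ of self-intersection $n$. After two ambient surgeries along handle curves of $\widehat\Sigma$, each resolved with a controlled number of blow-ups, one obtains an embedded 2-sphere $S$ of explicit self-intersection inside a stabilization $V=X_n(K)\#N\,\cpbar$. Deleting a neighborhood of $S$ produces a $4$-manifold $W$ bounded by $-Y$; gluing gives a closed smooth $4$-manifold $Z=X\cup_Y W$ whose intersection form is $L\oplus L_W$.

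The next step is to apply gauge-theoretic diagonalization. After blowing $X$ up enough times to arrange the signs, Donaldson's theorem yields an isometric embedding $L\oplus L_W \hookrightarrow \langle 1\rangle^{N'}$, and one then invokes the sharper obstructions of \cite{scaduto-forms} drawn from Yang--Mills instanton gauge theory together with the $d$-invariant inequalities of Ozsv\'ath--Szab\'o applied to each Spin$^c$ structure on $Y$. The heart of the argument is then a combinatorial enumeration of positive definite lattices $L$ compatible with these constraints. This enumeration mirrors the one behind the trefoil lattices $\mathscr{T}_n$, but with a rank-$2$ surplus in $L_W$; it produces one new family (the $\mathscr{C}_n$ of Figure~\ref{f:Cns} for $1\leqslant n\leqslant 11$), the two sporadic cases $E_8\oplus\langle 2\rangle\oplus\langle 1\rangle$ and $E_8\oplus\langle 3\rangle$, and a residual ambiguity $\Gamma_{12}\oplus\langle n\rangle$ which the current obstructions cannot eliminate. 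I expect this lattice enumeration to be the principal technical obstacle; the $\Gamma_{12}$ case arises precisely because $\Gamma_{12}$ and $\langle 1\rangle^{12}$ are indistinguishable from the point of view of all of our inputs, since they have the same rank and the same profile of short characteristic covectors.

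For part (ii) I would exhibit for each lattice in the list an explicit $4$-manifold bounded by $S_n^3(T_{2,5})$. For the $\mathscr{C}_n$ lattices this is the plumbing $4$-manifold associated with the graph in Figure~\ref{f:Cns}, and Kirby calculus --- blowdowns together with Rolfsen twists applied to the standard surgery picture for the cinquefoil --- identifies the boundary. The two sporadic lattices are realized by starting from the canonical $E_8$-filling of $S_n^3(T_{2,5})$ (inherited from the Milnor-fibre or $E_8$-plumbing construction of integer surgeries on $T_{2,5}$), and attaching one additional unknotted $2$-handle of framing $2$ or $3$ together with a suitable $\langle 1\rangle$ blow-up in the $n=2$ case, producing the required extra summand without changing the boundary. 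The $\Gamma_{12}$ case is omitted because no such natural construction is known. The main obstacle here is verifying the boundary identifications in each of the eleven $\mathscr{C}_n$ plumbings; this is a finite but tedious exercise in Kirby calculus.
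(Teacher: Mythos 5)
Your proposal diverges from the paper's argument in both parts, and in both places the divergence opens genuine gaps. For part (i), the paper does not diagonalize a closed manifold built from a genus-reduced slice surface; it argues by induction on $n$ using the positive definite surgery cobordism $W_n\colon S^3_n(K)\to S^3_{n-1}(K)$ (Lemma~\ref{lemma:cobordism}), which exhibits any filling of $S^3_n(K)$ as $v^\perp\subset M$ with $v^2=n(n-1)$ inside a filling $M$ of $S^3_{n-1}(K)$; the base case $n=1$ is the instanton result Theorem~\ref{thm:instanton}, the enumeration of complements is carried out against the Conway--Sloane classification (Table~\ref{fig:tableroots}, Lemmas~\ref{lemma:distinguished} and~\ref{lemma:clattices}), the $d$-invariant inequality enters only at $n=4$ via~\eqref{eq:dinv25}, and the extra $\langle 1\rangle$ summand at $n=2$ comes from a Rokhlin-theorem argument (Lemma~\ref{lemma:e821}). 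Your construction of the cap $W$ does not work as stated: the handle curves of $\widehat\Sigma$ need not bound embedded disks in the complement, so ambient surgery is not available; deleting a neighborhood of a sphere in $X_n(K)\#N\,\cpbar$ leaves a manifold with two boundary components, so $X\cup_Y W$ is not closed; and even after repairing this, the glued manifold is indefinite, so Donaldson's theorem gives no embedding into $\langle 1\rangle^{N'}$. The "combinatorial enumeration," which you correctly identify as the heart of the matter, is left entirely unspecified, and your stated reason for the $\Gamma_{12}$ ambiguity is incorrect: $\Gamma_{12}$ and $\langle 1\rangle^{12}$ are sharply distinguished by the inputs here ($\delta(\Gamma_{12})=8$ versus $\delta(\langle 1\rangle^{12})=0$); the ambiguity persists because the instanton theory for 3-manifolds with $H_1\neq 0$ is undeveloped, not because the two lattices look alike.

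For part (ii), realizing the $\mathscr{C}_n$ by the plumbings of Figure~\ref{f:Cns} is indeed what the paper does (via Moser's Seifert descriptions of $S^3_n(T_{2,5})$), but your construction of the sporadic lattices fails. Attaching a 2-handle along an unknot with framing $2$ or $3$ changes the boundary by connected sum with $L(2,1)$ or $L(3,1)$, so it cannot produce a filling of the same surgery; moreover there is no ``$E_8$-filling of $S^3_n(T_{2,5})$'' for $n=2,3$ to start from, since any filling of $S^3_n(K)$ has determinant $n$ while $\det E_8=1$. The paper instead realizes $E_8\oplus\langle 2\rangle\oplus\langle 1\rangle$ and $E_8\oplus\langle 3\rangle$ as complements of the class $6h-2e_1-\cdots-2e_8$ (further blown up as in Lemma~\ref{lemma:double}) represented by a PL sphere with a $T_{2,5}$ cusp, built from a rational cuspidal quartic with added lines (Proposition~\ref{prop:E823}); this algebro-geometric input is the genuinely new construction and is absent from your proposal.
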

    
\begin{figure}[h!]
        \begin{center}
\begin{tikzpicture}[scale=.4]
	\draw (0,-4) -- (0,6);
	\draw (-1,4) -- (0,4);
	
	\draw[fill=black] (0,0) circle(.1);
	\draw[fill=black] (0,1) circle(.1);
	\draw[fill=black] (0,2) circle(.1);
	\draw[fill=black] (0,3) circle(.1);
	\draw[fill=black] (0,4) circle(.1);
	\draw[fill=black] (0,5) circle(.1);
	\draw[fill=black] (0,6) circle(.1);
	\draw[fill=black] (0,-1) circle(.1);
	\draw[fill=black] (0,-2) circle(.1);
	\draw[fill=black] (0,-3) circle(.1);
	\draw[fill=black] (0,-4) circle(.1);
	\draw[fill=black] (-1,4) circle(.1);
	\node at (-.55,6) {$3$};

    \draw (3,-3) -- (3,6);
	\draw (2,4) -- (3,4);
	
	\draw[fill=black] (3,1) circle(.1);
	\draw[fill=black] (3,2) circle(.1);
	\draw[fill=black] (3,3) circle(.1);
	\draw[fill=black] (3,4) circle(.1);
	\draw[fill=black] (3,5) circle(.1);
	\draw[fill=black] (3,6) circle(.1);
	\draw[fill=black] (2,4) circle(.1);
	\draw[fill=black] (3, 0) circle(.1);
	\draw[fill=black] (3,-1) circle(.1);
	\draw[fill=black] (3,-2) circle(.1);
	\draw[fill=black] (3,-3) circle(.1);
	\node at (2.45,6) {$3$};
	
	\draw (6,-2) -- (6,6);
	\draw (5,4) -- (6,4);
	
	\draw[fill=black] (6,2) circle(.1);
	\draw[fill=black] (6,3) circle(.1);
	\draw[fill=black] (6,4) circle(.1);
	\draw[fill=black] (6,5) circle(.1);
	\draw[fill=black] (6,6) circle(.1);
	\draw[fill=black] (5,4) circle(.1);
	\draw[fill=black] (6, 1) circle(.1);
	\draw[fill=black] (6,-1) circle(.1);
	\draw[fill=black] (6,-2) circle(.1);
	\draw[fill=black] (6, 0) circle(.1);
	\node at (5.45,6) {$3$};
	
	\draw (9,-1) -- (9,6);
	\draw (8,4) -- (9,4);
	
	\draw[fill=black] (9,3) circle(.1);
	\draw[fill=black] (9,4) circle(.1);
	\draw[fill=black] (9,5) circle(.1);
	\draw[fill=black] (9,6) circle(.1);
	\draw[fill=black] (8,4) circle(.1);
	\draw[fill=black] (9,-1) circle(.1);
	\draw[fill=black] (9, 0) circle(.1);
	\draw[fill=black] (9, 1) circle(.1);
	\draw[fill=black] (9,2) circle(.1);
	\node at (8.45,6) {$3$};
	
	\draw (12,0) -- (12,6);
	\draw (11,4) -- (12,4);
	
	\draw[fill=black] (12,4) circle(.1);
	\draw[fill=black] (12,5) circle(.1);
	\draw[fill=black] (12,6) circle(.1);
	\draw[fill=black] (11,4) circle(.1);
	\draw[fill=black] (12,3) circle(.1);
	\draw[fill=black] (12,2) circle(.1);
	\draw[fill=black] (12,1) circle(.1);
	\draw[fill=black] (12,0) circle(.1);
	\node at (11.45,6) {$3$};
	
	\draw (15,1) -- (15,6);
	\draw (14,4) -- (15,4);
	
	\draw[fill=black] (15,5) circle(.1);
	\draw[fill=black] (15,6) circle(.1);
	\draw[fill=black] (14,4) circle(.1);
	\draw[fill=black] (15,3) circle(.1);
	\draw[fill=black] (15,2) circle(.1);
	\draw[fill=black] (15,1) circle(.1);
	\draw[fill=black] (15,4) circle(.1);
	\node at (14.45,6) {$3$};
	
	\draw (18,2) -- (18,6);
	\draw (17,4) -- (18,4);
	
	\draw[fill=black] (18,5) circle(.1);
	\draw[fill=black] (18,6) circle(.1);
	\draw[fill=black] (17,4) circle(.1);
	\draw[fill=black] (18,3) circle(.1);
	\draw[fill=black] (18,2) circle(.1);
	\draw[fill=black] (18,4) circle(.1);
	\node at (17.45,6) {$3$};
	
	\draw (21,3) -- (21,6);
	\draw (20,4) -- (21,4);
	
	\draw[fill=black] (21,5) circle(.1);
	\draw[fill=black] (21,6) circle(.1);
	\draw[fill=black] (20,4) circle(.1);
	\draw[fill=black] (21,3) circle(.1);
	\draw[fill=black] (21,4) circle(.1);
	\node at (20.45,6) {$3$};
	
	\draw (24,4) -- (24,6);
	\draw (23,4) -- (24,4);
	
	\draw[fill=black] (24,5) circle(.1);
	\draw[fill=black] (24,6) circle(.1);
	\draw[fill=black] (23,4) circle(.1);
	\draw[fill=black] (24,4) circle(.1);
	\node at (23.45,6) {$3$};
	
	\draw (27,5) -- (27,6);
	
	\draw[fill=black] (27,5) circle(.1);
	\draw[fill=black] (27,6) circle(.1);
	\draw[fill=black] (26,4) circle(.1);
	\node at (26.45,6) {$3$};

	\draw (30,5) -- (30,6);

	\draw[fill=black] (30,5) circle(.1);
	\draw[fill=black] (30,6) circle(.1);
	\node at (29.45,6) {$3$};
	\node at (29.45,5) {$4$};
	
	\node at (0,7.2) {$\mathscr{C}_{1}=\Gamma_{12}\qquad$};
	\node at (3,7.2) {$\mathscr{C}_{2}$};
	\node at (6,7.2) {$\mathscr{C}_{3}$};
	\node at (9,7.2) {$\mathscr{C}_{4}$};
	\node at (12,7.2) {$\mathscr{C}_{5}$};
	\node at (15,7.2) {$\mathscr{C}_{6}$};
	\node at (18,7.2) {$\mathscr{C}_{7}$};
	\node at (21,7.2) {$\mathscr{C}_{8}$};
    \node at (24,7.2) {$\mathscr{C}_{9}$};
    \node at (27,7.2) {$\mathscr{C}_{10}$};
	\node at (30,7.2) {$\mathscr{C}_{11}$};
\end{tikzpicture}
\end{center}
\caption{The cinquefoil lattices $\mathscr{C}_n$.}\label{f:Cns}
\end{figure}
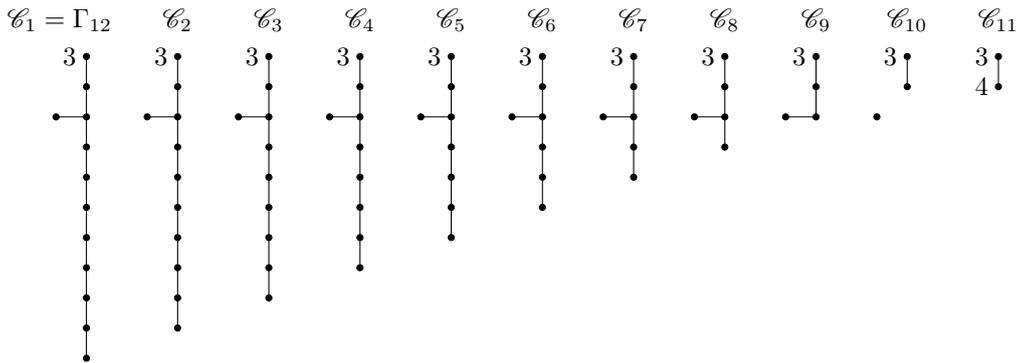
    
    \noindent The lattice $\mathscr{C}_n$ is the positive definite lattice of determinant $n$ and rank $13-n$ formed from the above indicated graph with $n$ vertices; the basis vectors are indexed by the vertices, with squared norm indicated by the weights. Two vectors have pairing $-1$ if their vertices are connected by an edge, and $0$ otherwise. Unmarked vertices have weight $2$. The lattice $\mathscr{C}_{1}$ is the unique positive definite indecomposable unimodular lattice of rank 12, and is called $\Gamma_{12}$. Note also that $\mathscr{C}_{9}=\Lambda(3,2,2,2)$, $\mathscr{C}_{10}=A_1\oplus \Lambda(2,3)$, and $\mathscr{C}_{11}=\Lambda(3,4)$.

Theorem~\ref{thm:2,5} is incomplete in that the authors do not know if the lattices $\Gamma_{12}\oplus \langle n \rangle \oplus \langle 1 \rangle^k$ with $n\in\{2,3\}$ ever occur under the given hypotheses. The obstructions used in this article do not rule these out, but constructions have not yet been found (see Section~\ref{ss:finalrmks}).

The case $n=1$ in Theorem~\ref{thm:trefoil} is a generalization of Fr\o yshov's Theorem, as given in \cite{scaduto-forms}, while the case $n=1$ in Theorem~\ref{thm:2,5} follows from \cite[Theorem 1.1]{scaduto-forms}. The cases $n\geqslant 8$ in Theorem~\ref{thm:trefoil} and $n\geqslant 12$ in Theorem~\ref{thm:2,5}, for which only $\langle n\rangle\oplus \langle 1\rangle^k$ occur, also follow by a computation of $d$-invariants and a theorem of Owens and Strle \cite{owensstrle-delta}; we give a different argument.

The problem of \emph{existence} of~\emph{negative} definite fillings for positive surgeries on knots, without the assumption of having torsion-free homology, was studied by Owens and Strle~\cite{OwensStrle}; among other things, they give explicit necessary and sufficient conditions for a Dehn surgery along a torus knot to bound a negative definite 4-manifold. In contrast, our results concern the \emph{classification} of \emph{positive} definite fillings for positive integral surgeries along certain knots.
As for the results in Owens--Strle, our results are in fact invariant under knot concordance and, more generally, integral homology cobordism of rational homology spheres; for example, the conclusion of Theorem~\ref{thm:trefoil} (ii) holds for any 3-manifold $Y$ that is integer homology cobordant to $S^3_n(T_{2,3})$, e.g. $Y$ is obtained as $n$-surgery along a knot $K$ concordant to $T_{2,3}$.

Note that in Theorem~\ref{thm:trefoil} (ii) we exclude the trivial lattice, i.e. the case when $n=1$ and $k=0$;
indeed, this corresponds to the case when $S^3_1(K)$ bounds an integer homology ball (i.e. it is trivial in the integer homology cobordism group), and in that case none of the trefoil lattices $\mathscr{T}_n$ appear for any $n$.
We also remark that there are knots with slice genus 1 for which $S^3_1(K)$ does bound an integer homology ball: the knot $K = 11n_{79}$ was shown by Akbulut to be $+1$-shake-slice (e.g. there is a sphere representing a generator of the second homology of the trace of $+1$-surgery along $K$), but not slice~\cite{Akbulut}.



We also observe that the assumption that the 4-manifolds have torsion-free homology in Theorem~\ref{thm:trefoil} is essential:
indeed, for instance, both $S^3_{4}(T_{2,3})$ and $S^3_{9}(T_{2,3})$ bound rational homology 4-balls (e.g. see~\cite[Theorem 1.4]{AcetoGolla}), which have trivial intersection form, since $H_2$ is torsion.
By means of Kirby calculus, it is easy to exhibit such rational homology 4-balls with first homology isomophic to $\Z/2\Z$ and $\Z/3\Z$, respectively.

The main content of the article naturally falls under the purview of two headers: {\emph{obstructions}} and {\emph{constructions}}. The obstructions, derived from instanton Floer and Heegaard Floer theory, prove parts (i) of Theorems~\ref{thm:trefoil} and~\ref{thm:2,5}. In fact, the only input from instanton theory in the proof of Theorem~\ref{thm:trefoil} (i) is the above-mentioned result  \cite[Theorem 1.3]{scaduto-forms}, which covers the case $n=1$. We then proceed as follows. Between two consecutive positive integer surgeries there is a 2-handle cobordism; this allows us to argue inductively, realizing a given positive definite lattice bounded by $n$-surgery as the orthogonal complement of a vector with squared norm $n(n-1)$ within a positive definite lattice bounded by $(n-1)$-surgery. This algebraic constraint is not enough: the Heegaard Floer correction terms of Ozsv\'{a}th and Szab\'{o} finish the job; in fact, this input is only needed for the case $n=2$. We use the surgery formula for the Ozsv\'{a}th--Szab\'{o} $d$-invariants due to Ni and Wu \cite[Proposition~1.6]{niwu} and an inequality of Rasmussen \cite[Theorem 2.3]{rasmussen} along the way.
 The proof of Theorem~\ref{thm:2,5} (i) is similar; the only input from instanton theory is for the case $n=1$, from  \cite[Theorem 1.1]{scaduto-forms}, and the Heegaard Floer correction terms are used for the case $n=4$.
 
To prove parts (ii) of Theorems~\ref{thm:trefoil} and~\ref{thm:2,5} we construct smooth 4-manifolds with prescribed boundary and intersection form. While the constructions realizing the lattices in Theorem~\ref{thm:trefoil} (ii) are standard, those for some lattices in Theorem~\ref{thm:2,5} (ii) seem new, and use ideas coming from the topology of plane algebraic curves with cuspidal singularities.

Finally, we remark on the incompleteness of Theorem~\ref{thm:2,5} (ii). The relevant instanton theory is rather undeveloped for rational homology 3-spheres with non-zero $H_1$. The authors were able to use some of the partially developed theory from \cite{froyshov-thesis} to define obstructions in special cases, but could not obstruct the lattices  $\Gamma_{12}\oplus \langle n \rangle \oplus \langle 1 \rangle^k$ with $n\in\{2,3\}$.
It is possible that further development of the theory yields useful obstructions not obtained in this paper.

\subsubsection*{Outline.} In Section~\ref{sec:lattices} we describe basic properties of the lattices $\mathscr{T}_n$ and $\mathscr{C}_n$ that appear in Theorems~\ref{thm:trefoil} and~\ref{thm:2,5}. In Section~\ref{sec:obstructions} we use obstructions to prove parts (i) of Theorems~\ref{thm:trefoil} and~\ref{thm:2,5}, and in Section~\ref{sec:constructions} we provide our constructions to prove parts (ii) of Theorems~\ref{thm:trefoil} and~\ref{thm:2,5}.

\subsubsection*{Acknowledgments.}
We like to thank Carlos Rito for his assistance with computations with rational cuspidal curves (which were unsuccessful and did not make it to the paper). We also thank the organizers of the July 2018 conference ``Gauge Theory and Applications'' at the University of Regensburg for their hospitality and support.

Finally, we thank the referee for their useful comments and suggestions.

\section{Distinguished lattices}\label{sec:lattices}

Let $L$ be an integral lattice. Write $x\cdot y\in \Z$ for the pairing on $L$ applied to $x,y\in L$, and $x^2=x\cdot x$. The dual lattice $L^\ast$ is defined to be the subset of $L\otimes \Q$ consisting of vectors $x$ such that $x\cdot y\in\Z$ for all $y\in L$. The {\emph{determinant}} or {\emph{discriminant}} of $L$ is the number $[L^\ast:L]=|L^\ast/L|$, and agrees with the absolute value of the determinant of a matrix representation for the pairing defined on $L$. Henceforth, a lattice should be assumed integral, unless it arises as the dual of an integral lattice.

One of the lattices that we will encounter is the odd, unimodular, indefinite lattice ${\rm I}_{a,b}$; this is diagonalizable, and more precisely it admits an orthogonal basis $h_1,\dots,h_a, e_1,\dots,e_b$ such that $h_i^2 = 1$ and $e_j^2 = -1$.
If $a=1$, we will simply write $h$ instead of $h_1$.

Let $L$ be a positive definite lattice. We say $L$ is {\emph{reduced}} if it has no vectors of norm 1. We may always write $L=L'\oplus \langle 1\rangle^k$ where $k\geqslant 0$ and $L'$ is reduced; we call $L'$ the reduced part of $L$. A {\emph{root}} in $L'$ is a vector of squared norm 2. The {\emph{root lattice}} $\mathsf{R}(L)\subset L'$ is the sublattice generated by roots. A classical result of Witt says any root lattice is a direct sum of the lattices $A_n$ ($n\geqslant 1)$, $D_n$ ($n\geqslant 4$), $E_6$, $E_7$, $E_8$. For the basic properties of these lattices, see \cite[Chapter~4]{conwaysloane}.

A coordinate system for $E_8$ is obtained by taking the lattice in $\R^8$ generated by $(x_1,\ldots,x_8)\in\Z^8$ with $\sum x_i \in 2\Z$ and $(\nicefrac{1}{2},\ldots,\nicefrac{1}{2})$. A corresponding root diagram for $E_8$ is:
\begin{center}
\begin{tikzpicture}
	\draw (-1,0) -- (5,0);
	\draw (3,0) -- (3,1);
	
	\draw[fill=black] (1,0) circle(.06);
	\draw[fill=black] (2,0) circle(.06);
	\draw[fill=black] (3,1) circle(.06);
	\draw[fill=black] (3,0) circle(.06);
	\draw[fill=black] (4,0) circle(.06);
	\draw[fill=black] (0,0) circle(.06);
	\draw[fill=black] (-1,0) circle(.06);
	\draw[fill=black] (5,0) circle(.06);
	
	\node[font=\tiny] at (-1,-.35) {$(-\nicefrac{1}{2},\nicefrac{1}{2}^6,-\nicefrac{1}{2})$};
	\node[font=\tiny] at (0,.35) {$(0^6,-1,1)$};
	\node[font=\tiny] at (1,-.35) {$(0^5,-1,1,0)$};
	\node[font=\tiny] at (2,.35) {$(0^4,-1,1,0^2)$};
	\node[font=\tiny] at (3,-.35) {$(0^3,-1,1,0^3)$};
	\node[font=\tiny] at (4,.35) {$(0^2,-1,1,0^4)$};
	\node[font=\tiny] at (3,1.35) {$(\nicefrac{1}{2}^4,-\nicefrac{1}{2}^4)$};
	\node[font=\tiny] at (5,-.35) {$(0,-1,1,0^5)$};
\end{tikzpicture}
\end{center}
Superscripts are to be interpreted as repeated entries. In the above graph, all vertices have weight 2. The lattice $E_8$ is unimodular, i.e. $\det(E_8)=1$.

The lattice $E_7$ is obtained by taking the complement of $(\nicefrac{1}{2}^8)\in E_8$. A root diagram for $E_7$ is obtained by removing the far left node of the above graph, and $\det(E_7)=2$. The lattice $E_6$ is obtained as the complement of $(\nicefrac{3}{2},-\nicefrac{1}{2}^6,\nicefrac{3}{2})\in E_7$, a vector of squared norm $6$. A root diagram for $E_6$ is obtained by removing the two far left nodes in the above graph for $E_8$, and $\det(E_6)=3$. 

The lattice $D_n$ ($n\geqslant 4$) is defined to be the set of $(x_1,\ldots,x_{n})\in\Z^n$ such that $\sum x_i\in 2\Z$. Writing $e_1,e_2,\ldots$ for the standard orthonormal basis of Euclidean space, a root diagram for $D_n$ is:
\begin{center}
\begin{tikzpicture}
	\draw (-1,0) -- (2,0);
	\draw (3,0) -- (4,0);
	\draw (4,0) -- (5,.5);
	\draw (4,0) -- (5,-.5);
	
	\draw[fill=black] (1,0) circle(.06);
	\draw[fill=black] (5,-.5) circle(.06);
	\draw[fill=black] (4,0) circle(.06);
	\draw[fill=black] (0,0) circle(.06);
	\draw[fill=black] (-1,0) circle(.06);
	\draw[fill=black] (5,.5) circle(.06);
	
	\node at (2.5,0) {$\ldots$};
	\node[font=\tiny] at (-1,-.35) {$e_1-e_2$};
	\node[font=\tiny] at (0,.35) {$e_2-e_3$};
	\node[font=\tiny] at (1,-.35) {$e_3-e_4$};
	\node[font=\tiny] at (3.6,-.35) {$e_{n-2}-e_{n-1}$};
	\node[font=\tiny] at (5.8,.5) {$e_{n-1}-e_n$};
	\node[font=\tiny] at (5.8,-.5) {$e_{n-1}+e_n$};
\end{tikzpicture}
\end{center}
We have $\det(D_n)=4$. In fact, $D_n^\ast/D_n$ is isomorphic to $\Z/4\Z$ if $n$ is odd and $\Z/2\Z\oplus \Z/2\Z$ if $n$ is even. Next, $D_5$ is isomorphic to the complement of $(-\nicefrac{3}{2},\nicefrac{1}{2}^5,-\nicefrac{5}{2},\nicefrac{3}{2})\in E_6$, a vector of square $12$; a basis of roots is obtained from the above graph for $E_8$ by removing the three far left nodes.

The lattice $A_n$ is the set of $(x_1,\ldots,x_{n+1})\in\Z^{n+1}$ with $\sum x_i=0$. A root diagram for $A_n$ is given by $n$ vertices in a straight line graph, represented by, say, the roots $e_i-e_{i+1}$ for $1\leqslant i\leqslant n$. We have $\det(A_n)=n+1$. Furthermore, $A_4$ is isomorphic to the orthogonal complement of $(2,2,2,2,2)\in D_5$, a vector of squared norm $20$. Next, $A_1\oplus A_2$ is isomorphic to the complement of the vector $(3,3,-2,-2,-2)\in A_4$ of squared norm $30$. Finally, $\Lambda(2,4)$ is isomorphic to the complement of the vector $((3,-3),(2,2,-4))\in A_1\oplus A_2$ of square $42$.

The above descriptions are unique in the following sense. Recall that we write $\mathscr{T}_1,\ldots,\mathscr{T}_7$ for the above distinguished lattices $E_8,E_7,E_6,D_5,A_4,A_1\oplus A_2,\Lambda(2,4)$. Note $\det(\mathscr{T}_n)=n$. It is convenient for the following statement to set $\mathscr{T}_8=\langle 8\rangle$.

\begin{lemma}\label{lemma:distinguished}
    Let $1\leqslant n\leqslant 7$. If $L=v^\perp \subset \mathscr{T}_n$, where $v^2=n(n+1)$, $\det(L)=n+1$, then $L\cong \mathscr{T}_{n+1}$.
\end{lemma}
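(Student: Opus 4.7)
The plan is to argue case by case for $n = 1, 2, \ldots, 7$, relying on the explicit coordinate descriptions of the $\mathscr{T}_n$ already recorded in Section~\ref{sec:lattices}. The first move is to convert the determinant hypothesis into an index statement. Since $v$ is necessarily primitive and $v^2 \ne 0$, the sublattice $\langle v \rangle \oplus v^\perp$ has finite index in $\mathscr{T}_n$, and the standard identity
\[
v^2 \cdot \det(v^\perp) = [\mathscr{T}_n : \langle v\rangle \oplus v^\perp]^2 \cdot \det(\mathscr{T}_n),
\]
combined with $v^2 = n(n+1)$, $\det(v^\perp) = n+1$, and $\det(\mathscr{T}_n) = n$, forces $[\mathscr{T}_n : \langle v\rangle \oplus v^\perp] = n+1$. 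I would also note that $v^\perp$, being primitive in the reduced lattice $\mathscr{T}_n$ (which contains no norm-$1$ vectors for $1 \le n \le 7$), is itself reduced.

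With these preliminaries, the task reduces to enumerating the $\mathrm{Aut}(\mathscr{T}_n)$-orbits of primitive norm-$n(n+1)$ vectors in $\mathscr{T}_n$, discarding those for which the index differs from $n+1$, and identifying the orthogonal complement of any surviving vector with $\mathscr{T}_{n+1}$. Multiple surviving orbits would be harmless, provided they all produce isomorphic $v^\perp$.

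The extremal cases are immediate. For $n=1$, $v$ is a norm-$2$ vector in $E_8$, hence a root; since $W(E_8)$ acts transitively on roots, $v^\perp \cong E_7 = \mathscr{T}_2$. For $n=7$, the Gram matrix $\bigl(\begin{smallmatrix} 2 & -1 \\ -1 & 4 \end{smallmatrix}\bigr)$ of $\Lambda(2,4)$ permits a direct solution of $v^2 = 56$: one finds precisely two $\pm$-orbits, and in each case the orthogonal complement is generated by a norm-$8$ vector, so $v^\perp \cong \langle 8\rangle = \mathscr{T}_8$.

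For the intermediate cases $n \in \{2,3,4,5,6\}$ the same strategy applies: using the coordinate realizations of $\mathscr{T}_n$ as sublattices of $\mathbb{Z}^m$ or $(\tfrac{1}{2}\mathbb{Z})^m$ from Section~\ref{sec:lattices}, I would list primitive norm-$n(n+1)$ vectors by their sorted coordinate profile. Each case yields only a handful of candidate orbits, and for every orbit a short Gram-matrix computation on a natural basis of $v^\perp$ produces its determinant; every orbit other than the cascade one turns out to have determinant strictly larger than $n+1$, so the hypothesis pins down the cascade orbit, for which $v^\perp \cong \mathscr{T}_{n+1}$ by the construction already described in Section~\ref{sec:lattices}. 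The step I expect to be the main obstacle is the enumeration for $n \in \{3,4\}$, where $E_6$ and $D_5$ admit several orbits of vectors of the relevant norm ($12$ and $20$); however, the determinant computation in each orbit is mechanical, and the index condition $n+1$ invariably singles out the expected orbit.
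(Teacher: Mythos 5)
Your proposal is correct and follows essentially the same route as the paper's proof: the $n=1$ case via transitivity of $\mathrm{Aut}(E_8)$ on roots, and the remaining cases by enumerating norm-$n(n+1)$ vectors in the coordinate models of $\mathscr{T}_n$ up to equivalence and discarding orbits whose complements have the wrong determinant (the paper carries this out explicitly for $n=4$ and notes the rest are similar). The paper also records a shortcut you don't use — the lemma follows from the Conway--Sloane classification of reduced positive definite lattices of low rank and determinant (Lemma~\ref{lemma:evenlattices} and Table~\ref{fig:tableroots}), since $v^\perp$ is even of rank $8-n$ and determinant $n+1$ — which spares the orbit enumeration you flag as the main obstacle for $n\in\{3,4\}$.
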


\begin{proof}
It is well-known that the automorphism group of $E_8$ acts transitively on roots, so the complement of any root is isomorphic to $E_7$. This covers the case $n=1$. The remaining cases may be seen directly. For example, let $n=4$. A vector in $D_5$ of squared norm $20$ is equivalent to one of $(2^5)$, $(4,1^4)$, $(3^2,1^2,0)$, or $(4,2,0^3)$. The first case yields $A_4$. For the cases $(4,1^4)$ and $(3^2,1^2,0)$, note that the complements taken within the larger lattice $\Z^5$ have determinant $20$. Since $v^\perp\subset D_5$ is a full-rank sublattice of this larger complement, the determinant is divisible by $20$, hence cannot be $5$. For the case of $(4,2,0^3)=2(2,1,0^3)$, the complement in $\Z^5$ has determinant $5$. As $v^\perp\subset D_5$ is a proper full-rank sublattice therein, its determinant must be strictly larger than $5$.

The other cases are also straightforward. Alternatively, the lemma follows from Lemma \ref{lemma:evenlattices} below, which follows from the classification of positive definite integral lattices of low determinant and rank, as given in \cite[Table 0]{cs-low}, and reproduced in Table~\ref{fig:tableroots}.
\end{proof}

\begin{lemma}\label{lemma:evenlattices}
    If $L$ is an even positive definite lattice with $\det(L)=9-\text{\emph{rk}}(L)$, then $L\cong \mathscr{T}_{\det(L)}$.
\end{lemma}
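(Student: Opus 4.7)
The plan is to reduce the lemma to a finite enumeration and then apply the classification of positive definite integral lattices of small rank and determinant, due to Conway--Sloane \cite[Table 0]{cs-low} (reproduced here as Table~\ref{fig:tableroots}). The condition $\det(L)=9-\mathrm{rk}(L)$ and positive definiteness restrict $(\mathrm{rk}(L),\det(L))$ to the seven pairs
\[
(8,1),\; (7,2),\; (6,3),\; (5,4),\; (4,5),\; (3,6),\; (2,7).
\]
For each such pair I would read off the even lattices appearing in the Conway--Sloane table and verify that in every case there is a unique one up to isomorphism, matching $\mathscr{T}_{\det(L)}$. In ranks $8$ through $3$ this is the standard uniqueness of $E_8$ as the even unimodular lattice of rank $8$, together with the observation that the complements of short vectors in $E_8$ give the classically known even lattices $E_7,E_6,D_5,A_4,A_1\oplus A_2$; these uniquenesses are already in the Conway--Sloane table, so no additional work is required.

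The only case that is not strictly a classical root lattice statement is rank $2$, determinant $7$, so I would treat it by direct Gram matrix reduction rather than relying on the table. If $L$ is even, positive definite of rank $2$ with $\det(L)=7$, I claim $L$ contains a vector of squared norm $2$. Indeed, suppose $v \in L$ has minimum squared norm $m\geqslant 2$, and extend $\{v\}$ to a basis $\{v,w\}$; after replacing $w$ by $w-kv$ for appropriate $k$, the Gram matrix becomes
\[
\begin{pmatrix} m & b \\ b & 2c \end{pmatrix}, \qquad |b|\leqslant m/2,
\]
with $2mc - b^2 = 7$. The inequalities $c\geqslant m/2$ (from positive definiteness of the minimum assumption) combined with $m\geqslant 2$ even quickly force $m=2$. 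Then $4c - b^2 = 7$ with $|b|\leqslant 1$ forces $b=\pm 1$, $c=2$, so $L\cong \Lambda(2,4) = \mathscr{T}_7$.

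The main obstacle I anticipate is bookkeeping rather than mathematical difficulty: one must be careful in the rank $2$ case to rule out the possibility of a larger minimum (this is where parity and the small determinant conspire), and one should confirm that the Conway--Sloane table as cited really does list each of the six even lattices in ranks $3$ through $8$ as unique with the stated determinant. Once these verifications are complete, the lemma follows immediately by matching invariants.
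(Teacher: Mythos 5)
Your overall strategy is the same as the paper's: the paper proves this lemma purely by appeal to the Conway--Sloane classification \cite[Table 0]{cs-low} (Table~\ref{fig:tableroots}), and your rank-$2$ Gram-matrix reduction is a correct, self-contained replacement for the table in the determinant-$7$ case. However, there is one step in your procedure that would fail if carried out literally: Table~\ref{fig:tableroots} lists only \emph{indecomposable} reduced lattices, so the cell $(\mathrm{rk},\det)=(3,6)$ is empty, and ``reading off the even lattices appearing in the table'' for that pair produces nothing --- whereas $\mathscr{T}_6=A_1\oplus A_2$ is decomposable and is in fact the unique reduced lattice there. To make the argument complete you must, for each pair $(\mathrm{rk},\det)$, also enumerate orthogonal sums of indecomposables from the table whose ranks add to $\mathrm{rk}(L)$ and whose determinants multiply to $\det(L)$; for the seven cells in question this quickly yields only $A_1\oplus A_2$ in the $(3,6)$ case and nothing new elsewhere (note that evenness of $L$ forces $L$ to be reduced, so no $\langle 1\rangle$ summands need be considered).

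Two smaller points. First, your list of pairs omits $(\mathrm{rk},\det)=(1,8)$, which is allowed by $\det(L)=9-\mathrm{rk}(L)$ and is covered by the statement since the paper sets $\mathscr{T}_8=\langle 8\rangle$; this case is immediate, as $\langle 8\rangle$ is the only rank-one lattice of determinant $8$ and it is even. Second, in ranks $3$ through $8$ one should also confirm that the single indecomposable entry in each relevant cell is even (e.g.\ $D_5$, $A_4$ are, while entries such as $A_1O_1$ in other cells need not be); this is a routine check but is part of the ``matching invariants'' step you defer.
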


Now we consider the lattices $\mathscr{C}_n$. The unimodular lattice in this family, $\mathscr{C}_1=\Gamma_{12}$, may be defined as the lattice in $\R^{12}$ generated by $D_{12}$ along with the vector $(1/2,\ldots,1/2)$. The plumbing description is obtained from this by choosing the vertices to be the following vectors:
\begin{center}
\begin{tikzpicture}
	\draw (-1,0) -- (9,0);
	\draw (1,0) -- (1,1);
	
	\draw[fill=black] (1,0) circle(.06);
	\draw[fill=black] (2,0) circle(.06);
	\draw[fill=black] (1,1) circle(.06);
	\draw[fill=black] (3,0) circle(.06);
	\draw[fill=black] (4,0) circle(.06);
	\draw[fill=black] (0,0) circle(.06);
	\draw[fill=black] (-1,0) circle(.06);
	\draw[fill=black] (5,0) circle(.06);
	\draw[fill=black] (6,0) circle(.06);
	\draw[fill=black] (7,0) circle(.06);
	\draw[fill=black] (8,0) circle(.06);
	\draw[fill=black] (9,0) circle(.06);
	
	\node[font=\tiny] at (-1,-.35) {$(\nicefrac{1}{2}^{12})$};
	\node[font=\tiny] at (0,.35) {$(-1,-1,0^{10})$};
	\node[font=\tiny] at (1,-.35) {$(0,1,-1,0^{9})$};
	\node[font=\tiny] at (2,.35) {$(0^{2},1,-1,0^{8})$};
	\node[font=\tiny] at (3,-.35) {$(0^3,-1,1,0^7)$};
	\node[font=\tiny] at (4,.35) {$(0^4,-1,1,0^6)$};
	\node[font=\tiny] at (1,1.35) {$(1,-1,0^{10})$};
	\node[font=\tiny] at (5,-.35) {$(0^5,1,-1,0^5)$};
	\node[font=\tiny] at (6,.35) {$(0^6,1,-1,0^4)$};
	\node[font=\tiny] at (7,-.35) {$(0^7,1,-1,0^3)$};
	\node[font=\tiny] at (8,.35) {$(0^8,1,-1,0^2)$};
	\node[font=\tiny] at (9,-.35) {$(0^9,1,-1,0)$};
\end{tikzpicture}
\end{center}
The only node not of weight 2 is the far left node of weight 3. It is convenient for the following statement to set $\mathscr{C}_{12}=\langle 12\rangle$.

\begin{lemma}\label{lemma:clattices}
    Let $1\leqslant n \leqslant 11$. If $L=v^\perp \subset \mathscr{C}_n$, where $v^2=n(n+1)$, $\det(L)=n+1$, and $n\neq 3$, then $L\cong \mathscr{C}_{n+1}$. If $n=3$ then $L\cong \mathscr{C}_{4}$ or $L\cong D_9$. There is no $v\in D_9$ with $v^2=20$ and $\det(v^\perp)=5$.
\end{lemma}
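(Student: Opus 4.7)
The plan is to follow the same strategy as Lemma~\ref{lemma:distinguished}. Starting from the index identity $v^{2}\cdot\det(L) = [\mathscr{C}_n:\langle v\rangle\oplus L]^{2}\cdot\det(\mathscr{C}_n)$ and substituting the given invariants $v^{2}=n(n+1)$, $\det(L)=n+1$, $\det(\mathscr{C}_n)=n$, we immediately get that the index equals $n+1$. In particular $L$ has rank $12-n$ and determinant $n+1$, matching the invariants of $\mathscr{C}_{n+1}$, and for $n=3$ also those of $D_{9}$.

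To identify $L$ up to isomorphism, I would invoke the Conway--Sloane classification of positive definite integral lattices of low rank and determinant \cite[Table~0]{cs-low}, reproduced in Table~\ref{fig:tableroots}, in the same spirit as the proof of Lemma~\ref{lemma:distinguished}. One further constraint I would exploit is that $L$ must be reduced, since $\mathscr{C}_n$ contains no vector of norm~$1$. Combined, these yield $L\cong\mathscr{C}_{n+1}$ for $n\neq 3$ and $L\in\{\mathscr{C}_{4}, D_{9}\}$ for $n=3$; this is a finite case-check across $n=1,\ldots,11$ parallel to the verifications given in the proof of Lemma~\ref{lemma:distinguished}. The main obstacle I anticipate is at small $n$, where the rank of $L$ is largest and the classification tables alone might not immediately pin down $\mathscr{C}_{n+1}$; some supplementary bookkeeping with the discriminant form induced by the embedding $\langle v\rangle\oplus L\hookrightarrow \mathscr{C}_n$ may be required.

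For the final nonexistence claim, suppose $v \in D_{9}$ is primitive with $v^{2}=20$. Since $[\Z^{9}:D_{9}]=2$, either (a) $v$ is primitive in $\Z^{9}$, or (b) $v=2w$ for some $w \in \Z^{9}\setminus D_{9}$ primitive in $\Z^{9}$ with $w^{2}=5$. In case (a), since $\Z^{9}$ is unimodular, $v^{\perp}\cap\Z^{9}$ has determinant $v^{2}=20$, and $v^{\perp}\cap D_{9}$ is either equal to it or an index-$2$ sublattice therein, so $\det(v^{\perp}\cap D_{9})\in\{20,80\}$. In case (b), a direct inspection shows that, up to sign and permutation, $w\in\{(2,1,0^{7}),(1^{5},0^{4})\}$; for each such $w$ one exhibits a vector in $w^{\perp}\cap\Z^{9}$ of odd coordinate sum (for instance $(1,-2,0^{7})$ or $e_{6}$), so that $v^{\perp}\cap D_{9}=w^{\perp}\cap D_{9}$ has index $2$ in $w^{\perp}\cap\Z^{9}$ and determinant $4\cdot 5=20$. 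In no subcase is $\det(v^{\perp})=5$, giving the required contradiction.
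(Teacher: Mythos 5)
Your argument for the first part has a genuine gap at its crux. Knowing that $L$ is reduced of rank $12-n$ and determinant $n+1$ does \emph{not} pin it down via the Conway--Sloane table: for most values of $n$ the table contains several lattices with exactly those invariants. For instance, at $n=3$ the rank-$9$, determinant-$4$ candidates are $D_8O_1=\mathscr{C}_4$, $D_9$, $E_7A_1O_1$, and also the \emph{decomposable} lattice $E_8\oplus A_1\oplus A_1$ (the table lists only indecomposables, so decomposable candidates such as $E_6\oplus A_1$ at $n=5$, $A_2\oplus A_2$ at $n=8$, and $A_1\oplus\Lambda(2,3)$ at $n=9$ must be enumerated separately); at $n=4$ one must exclude $E_7O_1$; at $n=6$ one must exclude $A_6$; and so on. The entire content of the lemma is the exclusion of these extra candidates, which the paper achieves by observing that the root lattice of $\mathscr{C}_n$ is $D_{12-n}$ for $2\leqslant n\leqslant 8$, so that any $L=v^\perp\subset\mathscr{C}_n$ has root lattice embedding into $D_{12-n}$ --- ruling out every candidate containing $E_6$, $E_7$, $E_8$, $A_6$, an extra $D_4$, etc. --- together with ad hoc determinant-divisibility arguments at $n=9$ and $n=10$ (e.g. $\Lambda(2,2,4)$ would have to embed into the $A_3$ spanned by the norm-$4$ vectors of $\mathscr{C}_9$, but $\det\Lambda(2,2,4)=10$ is not divisible by $\det A_3=4$). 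Your deferral to ``supplementary bookkeeping with the discriminant form'' does not identify this mechanism, so the claimed conclusion ``combined, these yield $L\cong\mathscr{C}_{n+1}$'' is unsupported; as written, the case-check would not close.

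By contrast, your proof of the final nonexistence claim is correct and is genuinely different from the paper's. The paper again consults the table (the only rank-$8$, determinant-$5$ candidates are $D_7O_1$, which is odd whereas $v^\perp\subset D_9$ must be even, and $E_7O_1$, whose root lattice does not embed in $D_9$). Your argument instead works directly inside $\Z^9\supset D_9$: after the (implicit, but easily justified) reduction to $v$ primitive in $D_9$, the dichotomy ``$v$ primitive in $\Z^9$'' versus ``$v=2w$ with $w^2=5$'' together with Lemma~\ref{lemma:littlelattice} and the index-$2$ inclusion $D_9\subset\Z^9$ forces $\det(v^\perp)\in\{20,80\}$ in the first case and $\det(v^\perp)=20$ in the second, never $5$. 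This is elementary and self-contained, at the cost of an explicit enumeration of norm-$5$ vectors; it would be worth stating the reduction to primitive $v$ explicitly (if $v=2w$ with $w\in D_9$ then $w^2=5$ is odd, impossible in the even lattice $D_9$).
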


\begin{proof}
We refer again to Table \ref{fig:tableroots}, now for the classification of reduced positive definite integral lattices with determinant $n$ and rank $13-n$. Note that the table only lists {\emph{indecomposable}} lattices, and that all lattices therein are listed by their root lattices. When the root lattice is of positive codimension $i$, the symbol $O_i$ is included in the notation.
Observe that, for $2\leqslant n \leqslant 8$, the root lattice of $\mathscr{C}_n$ is $D_{12-n}$.

We start with $n=1$. The lattice listed in Table \ref{fig:tableroots} as $D_{12}$ is none other than $\mathscr{C}_1=\Gamma_{12}$, and there is only one possibility for $L=v^\perp$; indeed, according to the table, there is a unique positive definite lattice of rank 11 and determinant 2, $D_{10}O_1$, which must be isomorphic to $\mathscr{C}_2$. Similarly, for $n=2$, the only possibility for $L$ is $\mathscr{C}_3=D_9O_1$. For $n=3$, however, there are several possibilities for $L$, namely $\mathscr{C}_4=D_8O_1$, $D_9$, $E_7A_1O_1$, and $E_8\oplus A_1\oplus A_1$. The latter two lattices cannot occur, as $E_7$ and $E_8$ do not embed into $D_9$, the root lattice of $\mathscr{C}_{3}$. To see that $D_9$ can occur, view $\mathscr{C}_{3}$ as the lattice generated by $(\nicefrac{1}{2}^{12})$, $(-1^2,0^{10})$, $(1,-1,0^{10})$, $\ldots$, $(0^{7},1,-1,0^3)$.  Then $v=(0^9,2^3)\in\mathscr{C}_{3}$ has $v^2=12$ and $v^\perp\cong D_9$; indeed, $v^\perp$ is spanned by $(-1^2,0^{10})$, $(1,-1,0^{10})$, $\ldots$, $(0^{7},1,-1,0^3)$.

Next, if $n=4$, there are a priori two possibilities for $L$ according to Table \ref{fig:tableroots}, $\mathscr{C}_5=D_7O_1$ and $E_7O_1$. However, the latter cannot occur, as $E_7$ does not embed into $D_8$, the root lattice of $\mathscr{C}_4=D_8O_1$. Similarly, if $n\in\{5,6,7,8\}$, there are in each case two possibilities: for $n=5$, $\mathscr{C}_6=D_6O_1$ or $E_6\oplus A_1$; for $n=6$, $\mathscr{C}_6=D_5O_1$ or $A_6$; for $n=7$, $\mathscr{C}_8=D_4O_1$ or $D_4\oplus A_1$; and for $n=8$, $\mathscr{C}_9=A_3O_1$ and $A_2\oplus A_2$. In each case, the lattice listed after $\mathscr{C}_{n+1}$ cannot occur, because it has a root lattice that does not embed into $D_{12-n}$, the root lattice of $\mathscr{C}_n$.

If $n=9$, then $L$ is either $\mathscr{C}_{10}=A_1\oplus\Lambda(2,3)$ or $A_2O_1=\Lambda(2,2,4)$. We claim that the latter lattice cannot occur. Note that $\mathscr{C}_9=\Lambda(2,2,2,3)$ embeds into $\mathbb{Z}^{6}$ as the sublattice generated by $(1^2,-1,0^3)$, $(0^2,1,-1,0^2)$, $(0^3,1,-1,0)$, $(0^4,1,-1)$. From this description, it is easily seen that the vectors of squared norm 4 in $\mathscr{C}_9$ are in the root lattice $A_3\subset \mathscr{C}_9$. Consequently, if $L$ were isomorphic to $\Lambda(2,2,4)$, then $\Lambda(2,2,4)$ would embed into $A_3$. However, the determinant of $\Lambda(2,2,4)$ is 10, which is not divisible by $\det(A_3)=4$, a contradiction, verifying the claim.

If $n=10$, then $L$ is either $\mathscr{C}_{11}=O_2$ or $A_1O_1=\Lambda(2,6)$. If the latter were to occur, it would embed into the $\Lambda(2,3)$-summand of $\mathscr{C}_{10}$. However, the determinant of $A_1O_1$ is 11, and is not divisible by $\det(\Lambda(2,3))=5$. Thus $A_1O_1$ cannot occur. If $n=11$, the only possibility for $L$ is $\langle 12\rangle$.

Finally, if $v\in D_9$ with $v^2=20$ and $\det(v^\perp)=5$, then $v^\perp$ must be one of $D_7O_1$ or $E_7O_1$. However, the lattice $\mathscr{C}_4=D_7O_1$ cannot occur because it is odd, while $v^\perp$ must be even; and the lattice $E_7O_1$ cannot occur because $E_7$ does not embed into $D_9$. Thus there is no such vector $v$.
\end{proof}


\section{Obstructions}\label{sec:obstructions}

In this section we prove Theorem~\ref{thm:trefoil} (i) and Theorem~\ref{thm:2,5} (i). The obstructions used are introduced in Sections~\ref{subsec:surgerycob}--\ref{subsec:instantons}, and the proofs are completed in Section~\ref{subsec:obsproofs}. All homology groups are taken with integer coefficients, unless specified otherwise.

Let $X$ and $Y$ be as in Definition~\ref{def:filling}, so that the intersection form $L_X=H_2(X)$ fills $Y$. As $H_2(Y)=0$, the long exact sequence for the pair $(X,Y)$ yields the exact sequence
\begin{equation}
0\longrightarrow H_2(X) \longrightarrow H_2(X,Y) \longrightarrow H_1(Y) \longrightarrow H_1(X)\label{eq:les}
\end{equation}

Since $H_\ast(X)$ has no torsion, the right-most map in~\eqref{eq:les} goes from the torsion group $H_1(Y)$ to the free abelian group $H_1(X)$, so it must be zero.
The map $H_2(X)\to H_2(X,Y)$ is an isomorphism over the rationals, and so the pairing on $H_2(X)$ may be extended as a $\Q$-valued pairing on $H_2(X,Y)$.
An application of Poincar\'{e}--Lefschetz duality shows that $H_2(X,Y)$ may be identified with the dual lattice $L_X^\ast$.
In this way, \eqref{eq:les} induces an isomorphism between $H_1(Y)$ and the discriminant group $L_X^\ast /L_X$, and in particular $\det L_X = |H_1(Y)|$.

\subsection{Algebraic constraints from surgery cobordisms}\label{subsec:surgerycob}

We now discuss constraints imposed by surgery 2-handle cobordisms. Let $K$ be a knot in the 3-sphere, and let $Y_n = S^3_n(K)$ be the result of $n$-surgery on $K$ where $n\in\Z$, and $K_n\subset Y_n$ be the dual knot.
For $n\geqslant 2$ there is a positive definite surgery cobordism $W_n:Y_{n}\to Y_{n-1}$ obtained by attaching a 2-handle to a meridian of the surgered neighborhood of $K_n\subset Y_{n}\times\{1\}\subset Y_{n}\times [0,1]$ with framing $+1$, as in Figure~\ref{fig:trefoilcob}.
From the long exact sequence of the pair $(W_n,Y_n)$, using the fact that $H_*(W_n,Y_n)$ is free abelian of rank 1, supported in degree 2, by Poincar\'e--Lefschetz duality we see that the group $L_n:=H_2(W_n)$, too, is free abelian of rank 1.
Observe that $L_{n}^\ast/L_{n}$ is isomorphic to $H_1(Y_n)\oplus H_1(Y_{n-1})$, which is cyclic of order $n(n-1)$. As $L_{n}$ is rank 1 and positive definite, it must be isomorphic to $\langle n(n-1)\rangle$.

\begin{figure}[t]
\labellist
\pinlabel $\langle n\rangle$ at 40 250
\pinlabel $+1$ at 380 50
\pinlabel $n-1$ at 625 250 
\endlabellist
\centering
\includegraphics[scale=.30]{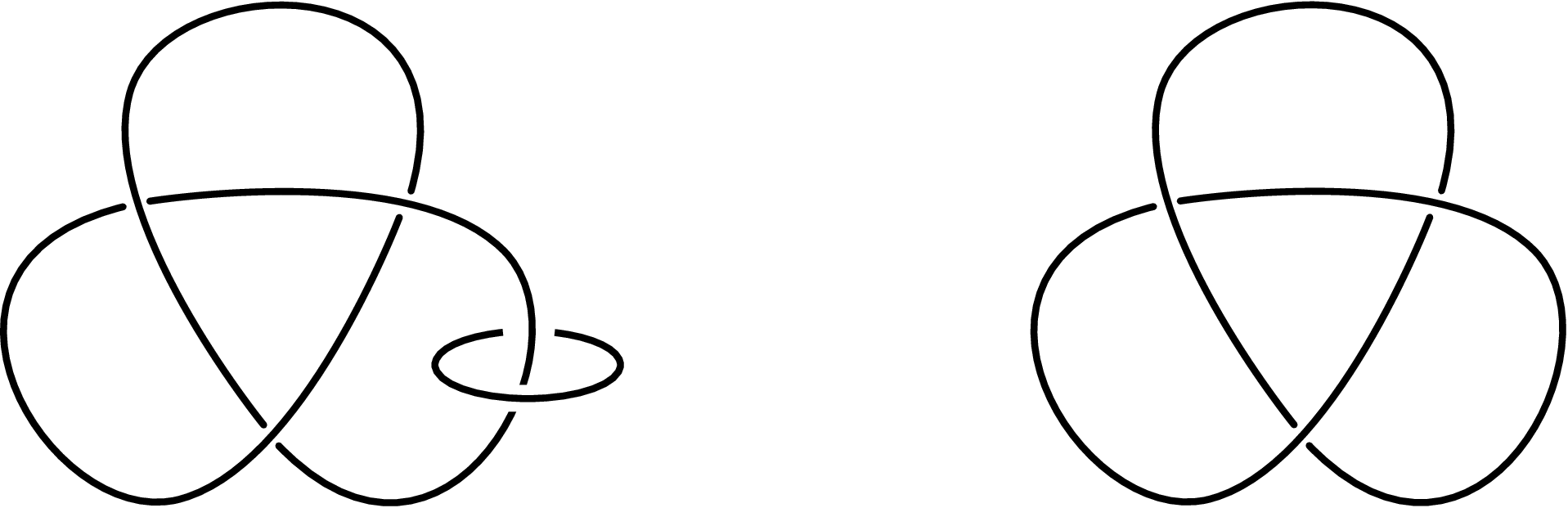}
\caption{\small The surgery cobordism $W_n$ is obtained by attaching a $+1$-framed meridian to $n$-surgery on $K$ (left), and the outgoing boundary of $W_n$ is homeomorphic to $(n-1)$-surgery (right). 
Here we use angular braces for relative handlebodies~\cite[Section 5.5]{GS}.}\label{fig:trefoilcob}
\end{figure}

Now suppose a positive definite smooth 4-manifold $X$ with no torsion in its homology has boundary $Y_n$. Then the Mayer--Vietoris sequence for the composite $Z=X\cup W_n$ yields
\[
0\longrightarrow H_2(X) \oplus H_2(W_n) \longrightarrow H_2(Z) \longrightarrow H_1(Y_n) \longrightarrow 0.
\]
Via Poincar\'{e}--Lefschetz duality we obtain an embedding of lattices $L_X\oplus L_n\hookrightarrow L_Z$.

\begin{lemma}\label{lemma:cobordism}
Let $n\geqslant 2$. Suppose $S_{n}^3(K)$ bounds a positive definite lattice $L$. Then $\det L=n$ and $L=v^\perp \subset M$ where $M$ is positive definite, fills $S_{n-1}^3(K)$, and $v\in M$ has $v^2=n(n-1)$.
\end{lemma}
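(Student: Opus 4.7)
The claim $\det L = n$ is immediate from the discussion after~\eqref{eq:les}, which gives $|L^\ast/L| = |H_1(Y_n)| = n$.

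For the structural statement, the plan is to construct $M$ as the intersection lattice of $Z := X \cup_{Y_n} W_n$, where $X$ is a smooth, torsion-free-homology filling of $Y_n$ with $L_X \cong L$. Since both $X$ and $W_n$ are positive definite, so is $Z$, and $\partial Z = Y_{n-1}$. The heart of the argument is to check that $H_\ast(Z)$ is torsion-free and that the intersection form of $Z$ decomposes as $L_Z \cong L \oplus \langle v \rangle$ with $v^2 = n(n-1)$. Granted this, setting $M = L_Z$ and taking $v$ to be a generator of the $L_n$-summand gives $L = v^\perp \subset M$, as required.

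The main obstacle will be ruling out torsion in $H_2(Z)$, since a priori the Mayer--Vietoris sequence
\[
0 \to H_2(X) \oplus H_2(W_n) \to H_2(Z) \to H_1(Y_n) \to H_1(X) \oplus H_1(W_n) \to H_1(Z) \to 0
\]
could have a nontrivial connecting map from $H_1(Y_n) \cong \Z/n\Z$. I would resolve this with the key geometric observation that the attaching circle of the 2-handle in $W_n$, being a meridian of the surgery solid torus around $K_n$, bounds a meridian disk of that solid torus and is therefore \emph{nullhomotopic} in $Y_n$. Consequently $H_1(Y_n) \to H_1(W_n)$ is an isomorphism, making the map $H_1(Y_n) \to H_1(X) \oplus H_1(W_n)$ injective; this forces $H_2(Z) \to H_1(Y_n)$ to vanish and yields $H_2(Z) \cong H_2(X) \oplus H_2(W_n)$, a torsion-free group. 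The same sequence gives $H_1(Z) \cong H_1(X)$, which is torsion-free by assumption.

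To conclude, the intersection pairing on $H_2(Z)$ respects the above decomposition: any class in $H_2(X)$ and any class in $H_2(W_n)$ can be represented by surfaces supported in the disjoint interiors of $X$ and $W_n$, and so meet with zero geometric intersection in $Z$; self-intersections are unchanged from those computed inside $X$ or $W_n$. Therefore $L_Z \cong L_X \oplus L_n = L \oplus \langle v\rangle$ as lattices with $v^2 = n(n-1)$, completing the plan.
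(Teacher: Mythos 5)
Your identification of $\det L = n$ is fine, but the structural part of your argument rests on a false geometric claim: the attaching circle of the $2$-handle in $W_n$ is \emph{not} nullhomotopic in $Y_n$. As Figure~\ref{fig:trefoilcob} shows, the handle is attached along a meridian $\mu$ of the original knot $K$ (blowing down this $+1$-framed meridian is exactly what turns $n$-surgery into $(n-1)$-surgery), and $[\mu]$ \emph{generates} $H_1(Y_n)\cong\Z/n\Z$. The curve you describe --- the boundary of a meridian disk of the surgery solid torus --- is indeed nullhomotopic, but attaching a $2$-handle along it would give a cobordism from $Y_n$ to a connected sum of $Y_n$ with a lens space, not to $Y_{n-1}$. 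With the correct attaching circle, $H_1(W_n)=0$ and the Mayer--Vietoris map $H_1(Y_n)\to H_1(X)\oplus H_1(W_n)$ is zero (a torsion group maps trivially to the free group $H_1(X)$), so the connecting map $H_2(Z)\to H_1(Y_n)$ is \emph{onto}. Hence $L_X\oplus L_n$ sits inside $L_Z$ with index $n$ rather than equalling it. Your conclusion $L_Z\cong L\oplus\langle n(n-1)\rangle$ also fails a determinant check: $Z$ fills $Y_{n-1}$, so $\det L_Z$ must equal $n-1$, whereas $\det\bigl(L\oplus\langle n(n-1)\rangle\bigr)=n^2(n-1)$.

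Two steps that your splitting was meant to make automatic therefore still need proof. First, torsion-freeness of $H_2(Z)$ must be established directly; the paper does this from the long exact sequence of the pair $(Z,X)$, using $H_*(Z,X)\cong H_*(W_n,Y_n)\cong\Z$ concentrated in degree $2$ together with the freeness of $H_1(X)$ to show that $H_2(Z)\to H_2(Z,X)$ is surjective and that $H_2(Z)$ is an extension of free groups. Second, one must upgrade the (correct) observation that classes from $X$ and from $W_n$ pair trivially to the equality $L_X=v^\perp$: since $L_X\subset v^\perp$ has full rank and $v^\perp/L_X$ is a torsion subgroup of $L_Z/L_X\cong\Z$, it vanishes. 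Your disjoint-surfaces argument yields only the orthogonality $L_X\perp v$, not that $L_X$ and $v$ span $L_Z$ --- and in fact they do not.
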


\begin{proof}
As seen at the beginning of the section, $\det L = n$.

Suppose $L = L_X$; we will show we can choose $M = L_Z$ and $v$ to be the image of the generator of $L_n$ under the map induced by the inclusion $W_n\hookrightarrow Z$, as constructed above.

We begin by showing that $Z$ has torsion-free homology, which shows that $L_Z$ fills $Y_{n-1}$, and in particular that $\det L_Z = n-1$.
Indeed, we can look at the long exact sequence of the pair $(Z,X)$; note that $H_*(Z,X) = H_*(W_n,Y_n)\cong \Z$, supported in degree 2, by excision.
Therefore, the inclusion $X\hookrightarrow Z$ induces an isomorphism in degrees different from 1 and 2;
in those degrees, we have:
\[
0 \longrightarrow H_2(X) \longrightarrow H_2(Z) \longrightarrow H_2(Z,X) \longrightarrow H_1(X) \longrightarrow H_1(Z) \longrightarrow 0.
\]
However, since $H_2(Z,X) \cong \Z$, and $H_2(X)$ has co-rank 1 in $H_2(Z)$, we immediately see that the map $H_2(Z) \to H_2(Z,X)$ is surjective, hence $H_1(X) \cong H_1(Z)$;
at the same time, $H_2(Z)$ sits in an exact sequence between two free groups, hence it is itself free.

We also have that $L_X \subset v^\perp$ as a full rank sub-lattice, since we have an embedding $L_X\oplus L_n \hookrightarrow L_Z$ and $L_X$ has co-rank 1 in $L_Z$.
Finally, since $L_Z/v^\perp = (L_Z/L_X)/(v^\perp/L_X)$ and $L_Z/L_X \cong \Z$, $v^\perp/L_X = 0$, i.e. $L_X = v^\perp$.
\end{proof}

\begin{lemma}\label{lemma:littlelattice}
    Suppose $M$ is a lattice, $v\in M$ has $v^2\neq 0$, and let $d=\text{{\emph{gcd}}}\left\{v\cdot w : w\in M\right\}$. Then 
    \[
        \det(v^\perp) \; = \; \frac{v^2}{d^2}\cdot \det(M).
    \]  
\end{lemma}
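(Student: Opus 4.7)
The plan is to compare the lattice $M$ with the orthogonal direct sum $N := \Z v \oplus v^\perp$ sitting inside it, and to use the standard sublattice index formula $\det(N) = [M:N]^2 \cdot \det(M)$.

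First I would set up the key auxiliary map. Consider the linear functional $\phi : M \to \Z$ defined by $\phi(w) = v \cdot w$. By the definition of $d$, the image of $\phi$ is precisely $d\Z$. Choose any $u \in M$ with $\phi(u) = d$; then because $d\Z$ is free abelian, the short exact sequence $0 \to v^\perp \to M \to d\Z \to 0$ splits, giving a decomposition of abelian groups
\[
M \;=\; \Z u \,\oplus\, v^\perp.
\]
(Note this is a splitting as abelian groups, not as orthogonal lattices — $u$ need not be perpendicular to $v^\perp$.) Writing $v$ in this decomposition as $v = au + w_0$ with $w_0 \in v^\perp$, and pairing with $v$, yields $v^2 = a(v \cdot u) + v \cdot w_0 = ad$, so $a = v^2/d$.

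Next I compute the index $[M:N]$ where $N = \Z v + v^\perp$. Since $v^\perp \subset N$ and, modulo $v^\perp$, the subgroup $N$ is generated by the image of $v = (v^2/d)u + w_0$, which is $(v^2/d)$ times the generator of $M/v^\perp \cong \Z$, we get
\[
[M:N] \;=\; \frac{v^2}{d}.
\]
On the other hand, $v$ is orthogonal to all of $v^\perp$, so $N$ is the genuine orthogonal direct sum of $\langle v^2 \rangle$ and $v^\perp$, giving $\det(N) = v^2 \cdot \det(v^\perp)$.

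Finally I would combine these via the sublattice determinant formula $\det(N) = [M:N]^2 \cdot \det(M)$ (valid since $N \subset M$ has full rank and $v^2 \neq 0$ ensures the index is finite). Substituting gives
\[
v^2 \cdot \det(v^\perp) \;=\; \left(\tfrac{v^2}{d}\right)^2 \det(M),
\]
from which the claimed identity $\det(v^\perp) = (v^2/d^2)\det(M)$ follows immediately. No step here is a genuine obstacle; the only point that requires care is keeping track of the fact that the splitting $M = \Z u \oplus v^\perp$ is only as abelian groups, while the splitting $N = \Z v \oplus v^\perp$ is orthogonal — it is precisely this mismatch that accounts for the factor $v^2/d^2$ rather than a naive $v^2$.
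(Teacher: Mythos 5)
Your proposal is correct and follows essentially the same route as the paper: both arguments compare $M$ with the full-rank sublattice $N=\Z v+v^\perp$, compute $[M:N]=v^2/d$ by examining the image of $N$ in $M/v^\perp\cong\Z$ (the paper via the third isomorphism theorem applied to $w\mapsto v\cdot w$, you via an explicit splitting $M=\Z u\oplus v^\perp$), and conclude with the index formula $\det(N)=[M:N]^2\det(M)$. The only cosmetic difference is that the paper spells out the derivation of that index formula from $[N^*:N]=[N^*:M^*][M^*:M][M:N]$, whereas you cite it as standard.
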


\begin{proof}
    Write $\ell = v^2=v\cdot v$. Define $N\subset M$ as $N = \{m + kv : m\in v^\perp, k\in \mathbb{Z}\}$. As $v$ is orthogonal to $v^\perp$, $N$ is isomorphic to $v^\perp\oplus\langle \ell\rangle$; it follows that $\det(N) = \ell \det(v^\perp)$. Define $f: M \to \mathbb{Z}$ by $f(w) = w\cdot v$; then $\ker f = v^\perp$, and $f(M) = d\mathbb{Z}$, while $f(N) = \ell\mathbb{Z}$.
It follows by the third isomorphism theorem that $M/N = \mathbb{Z}/(\ell/d)\mathbb{Z}$, and in particular $[M:N] = \ell/d$. We now recall that
\begin{align*}
\det(N) &= [N^* : N] = [N^* : M^*] \cdot  [M^* : M] \cdot [M : N]\\
& = [N^* : M^*]\cdot \det(M) \cdot [M : N] \\
& = [M : N]^2 \cdot \det(M),
\end{align*}
from which we obtain $\det(v^\perp) = \det(N)/\ell = (\ell/d^2)\cdot\det(M)$, as claimed.
\end{proof}

\subsection{Heegaard Floer correction terms}\label{subsec:dinvs}

We now recall the relevant background for Heegaard Floer $d$-invariants defined by Ozsv\'{a}th and Szab\'{o}~\cite{os}, building on analogous work of Fr{\o}yshov in the monopole setting~\cite{Froyshov-correction}. Each such {\emph{correction term}} $d(Y,\mathfrak{t})\in\Q$ is an invariant of $(Y,\mathfrak{t})$ where $Y$ is a rational homology 3-sphere and $\mathfrak{t}$ is a spin$^\text{c}$ structure for $Y$. Oszv\'{a}th and Sz\'{a}bo prove the following inequality \cite[Theorem~9.6]{os}: if $X$ is a smooth, compact, oriented, positive definite 4-manifold bounded by a rational homology 3-sphere $Y$, and $\mathfrak{s}$ is a spin$^\text{c}$ structure on $X$, then
\begin{equation}
      -4d(Y,\mathfrak{t}) \; \geqslant\;  b_2(X)-c_1(\mathfrak{s})^2\label{eq:dinvineq},
\end{equation}
where $\mathfrak{t}$ is the restriction of $\mathfrak{s}$ to $Y$. A {\emph{characteristic covector}} for a lattice $L$ is an element $\xi\in L^\ast$ such that $\xi\cdot x \equiv x^2 $ (mod 2) for all $x\in L$. Let $\text{Char}(L)$ denote the set of characteristic covectors for $L$. If the 4-manifold $X$ has no torsion in its homology, then $c_1(\mathfrak{s})$ ranges over the set of characteristic covectors for $L_X$ as $\mathfrak{s}$ varies.

\begin{defn} For a rational homology $3$-sphere $Y$, we set $\delta(Y)=-4\min_{\mathfrak{t}\in\text{\emph{Spin}}^c(Y)} d(Y,\mathfrak{t})$.
For a positive definite lattice $L$, we set $\delta(L)=\text{\emph{rk}}(L)-\min_{\xi\in \text{\emph{Char}}(L)}|\xi^2|$.
\end{defn}

For a positive definite lattice $L$ that fills a rational homology 3-sphere $Y$ inequality~\eqref{eq:dinvineq} implies
\begin{equation}
	\delta(Y) \; \geqslant \; \delta(L).\label{eq:delta}
\end{equation}

The lattice invariant $\delta(L)$ satisfies the following, as is easily verified: $\delta(L_1\oplus L_2)=\delta(L_1)+\delta(L_2)$; $\delta(L)=\text{rk}(L)$ if $L$ is even; $\delta(\langle n\rangle)=(n-1)/n$ if $n$ is odd. Furthermore, $\delta(\Gamma_{12})=8$. Indeed, using the description of $\Gamma_{12}$ from Section~\ref{sec:lattices}, a minimal characteristic covector for $\Gamma_{12}$ is given by $(2,0^{11})$.

A family of knot invariants $V_i(K)\in \Z_{\geqslant 0}$, indexed by $i\in \Z_{\geqslant 0}$, was introduced by Rasmussen~\cite{rasmussen2003floer} (who used the notation $h_i(K)$ instead of $V_i(m(K))$, where $m(K)$ is the mirror of $K$), and later studied by Rasmussen~\cite{rasmussen}, and Ni and Wu~\cite{niwu}. The invariants satisfy $V_{i}(K)-1\leqslant V_{i+1}(K)\leqslant V_i(K)$ for every $i\geqslant 0$, and vanish for large enough $i$. Let $U$ denote the unknot. Then Ni and Wu prove the following~\cite[Proposition~1.6, Remark~2.10]{niwu}:
\begin{equation}
    d\left(S_{p/q}^3(K),i\right) \;  = \; -2\max\left\{V_{\lfloor i/q \rfloor}(K),V_{\lceil (p-i)/q \rceil}(K) \right\} + d\left(S_{p/q}^3(U),i\right)\label{eq:dinvsurgery}
\end{equation}

Here $p/q$ is a positive rational number and $i$ is a non-negative integer less than $p$, corresponding to a spin$^\text{c}$ structure. On the other hand, Rasmussen shows in \cite[Theorem~2.3]{rasmussen} that
\[
    V_i(K) \;\leqslant \; \left\lceil \frac{g_4(K)-|i|}{2}\right\rceil \quad \text{ if }|i| < g_4(K),
\]
while  $V_i(K)= 0$ if $|i|\geqslant g_4(K)$. Here $g_4(K)$ is the slice genus of $K$. In particular, if $g_4(K)\leqslant 2$, there are three possibilities: either $V_i(K)=0$ for all $i\geqslant 0$; $V_0(K)=1$ and $V_i(K)=0$ for $i\geqslant 1$; or $V_0(K)=V_1(K)=1$ and $V_i(K)=0$ for $i\geqslant 2$. These three cases are realized, respectively, by the unknot $U$, the right-handed trefoil $T_{2,3}$, and the $(2,5)$-torus knot $T_{2,5}$. Together with \eqref{eq:dinvsurgery}, this implies the following.

\begin{lemma}\label{lemma:dinvgenus2}
Let $K$ be a knot of slice genus at most $2$. Then $d(S_{p/q}^3(K),i)=d(S_{p/q}^3(K'),i)$ for all $0\leqslant i < p$ and $p/q>0$, where $K'$ is either the unknot $U$, $T_{2,3}$ or $T_{2,5}$.
\end{lemma}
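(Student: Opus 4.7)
The plan is to read this lemma as a direct substitution into the Ni--Wu surgery formula (\ref{eq:dinvsurgery}). The key observation is that the right-hand side of that formula depends on $K$ only through the sequence $(V_i(K))_{i\geqslant 0}$, so any two knots with identical $V$-sequences must have identical surgery $d$-invariants in every spin$^c$ structure. It therefore suffices to show that every knot $K$ with $g_4(K) \leqslant 2$ shares its entire $V$-sequence with one of $U$, $T_{2,3}$, or $T_{2,5}$.

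First I would establish the enumeration of possible $V$-sequences. Combining the non-negativity of $V_i$, the monotonicity $V_{i+1}(K) \leqslant V_i(K)$, and Rasmussen's bound $V_i(K) \leqslant \lceil (g_4(K)-i)/2 \rceil$ for $i < g_4(K)$ (together with $V_i(K) = 0$ once $i \geqslant g_4(K)$), one finds that for $g_4(K) \leqslant 2$ the only possibilities are $V_i \equiv 0$; $V_0 = 1$ with $V_i = 0$ for $i \geqslant 1$; or $V_0 = V_1 = 1$ with $V_i = 0$ for $i \geqslant 2$. This is exactly the trichotomy already noted in the paragraph preceding the lemma.

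Next I would identify knots realizing these three sequences. Since $U$ is smoothly slice, $V_i(U) = 0$ for all $i$. For $T_{2,3}$ and $T_{2,5}$, one can invoke the standard formula expressing $V_i$ of an $L$-space knot in terms of its Alexander polynomial (or equivalently compare $d$-invariants of explicit surgeries): the outputs are $V_0(T_{2,3}) = 1$ with higher $V_i$ vanishing, and $V_0(T_{2,5}) = V_1(T_{2,5}) = 1$ with higher $V_i$ vanishing. Given $K$ with $g_4(K) \leqslant 2$, pick $K' \in \{U, T_{2,3}, T_{2,5}\}$ with the matching $V$-sequence. The Ni--Wu formula then yields $d(S^3_{p/q}(K), i) = d(S^3_{p/q}(K'), i)$ term by term, since both the $-2\max\{\,\cdot\,,\,\cdot\,\}$ piece and the $d(S^3_{p/q}(U), i)$ piece depend only on $p$, $q$, $i$ and on the common $V$-sequence. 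There is no substantive obstacle here: the lemma is a packaging statement, organizing Rasmussen's bound, the monotonicity of $V_i$, and the Ni--Wu surgery formula into a form convenient for the obstruction arguments to come.
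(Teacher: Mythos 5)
Your proposal is correct and follows essentially the same route as the paper: the paper's proof consists precisely of noting that Rasmussen's bound together with the monotonicity of the $V_i$ forces one of the three sequences $(0,0,\dots)$, $(1,0,\dots)$, $(1,1,0,\dots)$, that these are realized by $U$, $T_{2,3}$, $T_{2,5}$, and that the Ni--Wu formula~\eqref{eq:dinvsurgery} depends on $K$ only through its $V$-sequence. Nothing is missing.
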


By \cite[Proposition~4.8]{os} we have $d(S_{n}^3(U),i)=((2i-n)^2-n)/4n$ for $0\leqslant i <n$, and along with~\eqref{eq:dinvsurgery} this computes $d(S_n^3(K),i)$ for $K\in\{T_{2,3},T_{2,5}\}$. Relevant to our arguments below are:
\begin{align}
	\delta(S_2^3(T_{2,3})) \; = \; \;\;\max_{i\in\{0,1\}} -4d\left(S_2^3(T_{2,3}),i\right) \; & = \; 7, \label{eq:dinv23}\\
	\delta(S^3_4(T_{2,5})) \; = \; \max_{i\in\{0,1,2,3\}} -4d\left(S_4^3(T_{2,5}),i\right) \; & = \; 8.\label{eq:dinv25}
\end{align}
Further, we also have $\delta(S_4^3(K))<8$ when $K\in\{U,T_{2,3}\}$.

\subsection{Input from instanton theory} \label{subsec:instantons}
Finally, the most important obstruction we use is the following result from Yang--Mills instanton theory. The $g_4(K)=0$ case follows from Donaldson's Theorem~\cite{d-connections}, and the $g_4(K)=1$ case is a generalization of~\cite[Theorem~4.1]{froyshov-thesis}.

\begin{theorem}[\hspace{-0.0001cm}{\cite[Theorems~1.1~and~1.3]{scaduto-forms}}]\label{thm:instanton}
	Suppose a smooth, compact, oriented, and positive definite $4$-manifold $X$ with no $2$-torsion in its homology has boundary $S_n^3(K)$ for some knot $K$ with slice genus $g_4(K)\leqslant 2$. Then $L_X$ is isomorphic, for some $k\geqslant 0$, to one of the following:
\begin{align*}
	g_4(K)=0: \quad\;\; & \langle 1 \rangle^k;\\
	g_4(K)=1:\quad\;\;  & \langle 1\rangle^k, \qquad E_8\oplus\langle 1\rangle^k;\\
	g_4(K)=2:\quad \;\; & \langle 1 \rangle^k, \qquad E_8\oplus \langle 1 \rangle^k, \qquad \Gamma_{12}\oplus \langle 1 \rangle^k.\\
\end{align*}
\end{theorem}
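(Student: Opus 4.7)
The plan is to apply the classification results of Scaduto \cite{scaduto-forms} directly; I sketch below the strategy that the proof in \cite{scaduto-forms} follows, which I would adopt here. The key input is a smoothly embedded orientable surface $\Sigma\subset B^4$ of minimal genus $g=g_4(K)$ with $\partial\Sigma=K$. The complement $B^4\setminus\nu(\Sigma)$ has boundary $S_0^3(K)$ and $H_2\cong\mathbb{Z}^{2g}$. By attaching $1$- and $2$-handles one builds a cobordism $W\colon S_n^3(K)\to S^3$ whose intersection form has $b_2^+(W)\leqslant g$.

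Gluing $X$ to $W$ and capping the $S^3$ end with $B^4$ produces a closed, smooth, oriented $4$-manifold $Z=X\cup W\cup B^4$ whose intersection form decomposes (up to a finite-index sublattice) as $L_X\oplus L_W$. Since $L_X$ is positive definite and $b_2^+(L_W)\leqslant g$, the form $L_Z$ is ``almost definite,'' with $b_2^+(L_Z)\leqslant g$. For $g=0$, $L_Z$ is genuinely positive definite; Donaldson's diagonalization theorem~\cite{d-connections} forces it to be standard, yielding $L_X\cong\langle 1\rangle^k$. For $g\in\{1,2\}$ I would invoke the more refined Fr{\o}yshov-type moduli space arguments of \cite{froyshov-thesis, scaduto-forms} in order to classify which reduced positive definite lattices can appear as orthogonal summands of $L_Z$, subject to the bound on $b_2^+$ and the absence of $2$-torsion in $H_\ast(X)$: the conclusion is only $E_8$ for $g=1$, and $E_8$ or $\Gamma_{12}$ for $g=2$.

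The hard part will be the instanton analysis for $g=2$: one has to exclude every reduced positive definite indecomposable lattice of rank at most $12$ other than $E_8$ and $\Gamma_{12}$ (for instance $E_7\oplus A_1$, various $D_n$-summands, or rank-$12$ unimodular forms besides $\Gamma_{12}$). In \cite{scaduto-forms} this is achieved by analyzing degenerations of anti-self-dual instanton moduli spaces on $Z$ with $b_2^+=2$, together with characteristic covector inequalities coming from the structure of definite summands; the no-$2$-torsion hypothesis is essential to avoid reducible connections that would otherwise obstruct the relevant index-theoretic counts.
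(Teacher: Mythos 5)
This statement is not proved in the paper at all: Theorem~\ref{thm:instanton} is quoted verbatim from \cite[Theorems~1.1 and~1.3]{scaduto-forms} and used as a black box, and only for $n=1$ --- indeed, as written for general $n$ it is contradicted by the paper's own Theorem~\ref{thm:trefoil}~(ii), since $E_7\oplus\langle 1\rangle^k$ fills $S^3_2(T_{2,3})$ via a $4$-manifold with torsion-free homology, so the intended hypothesis is $n=1$ (your sketch, which sets up a cobordism from $S^3_n(K)$ for arbitrary $n$, inherits this issue). Your proposal ultimately treats the result the same way the paper does: the entire substantive content beyond the $g_4=0$ case (Donaldson) is delegated to the Fr\o yshov-type moduli-space arguments and the lattice classification of \cite{froyshov-thesis,scaduto-forms}, so there is no proof in either place to compare in detail, and as an account of the cited strategy yours is broadly consistent with the source. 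One inaccuracy worth flagging: the bound $b_2^+(L_Z)\leqslant g$ cannot be right as written, since the positive definite lattice $L_X$, of arbitrary rank, embeds into $L_Z$; what is actually bounded is the $b_2^+$ of the piece glued onto $X$ (equivalently, after reversing orientation one obtains a closed $4$-manifold with small $b_2^+$ containing $-L_X$ as a negative definite sublattice), which is the setting in which the characteristic-covector inequalities of \cite{scaduto-forms} apply.
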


\subsection{Proofs of parts (i) of Theorems~\ref{thm:trefoil} and~\ref{thm:2,5}}\label{subsec:obsproofs}

\begin{proof}[Proof of Theorem~\ref{thm:trefoil} (i)]
The case $n=1$ is implied by Theorem~\ref{thm:instanton}. Let $n\geqslant 2$. Suppose a positive definite lattice fills $Y_n=S_n^3(K)$. By Lemma~\ref{lemma:cobordism}, this lattice is isomorphic to a determinant-$n$ lattice $v^\perp \subset M$ for some $v\in M$ with $v^2=n(n-1)$, where $M$ is positive definite, fills $S_{n-1}^3(K)$, and has determinant $n-1$.
Write $M=M'\oplus\langle 1 \rangle^k$ where $M'$ is reduced, and $v=(x,y)$ where $x\in M'$ and $y=\sum y_i e_i\in\langle 1 \rangle^k$. Write $d=\text{gcd}\{v\cdot w:w\in M\}$. Then by Lemma~\ref{lemma:littlelattice} we have
\begin{equation}
	n \; = \; \det(v^\perp) \; = \; \frac{v^2}{d^2}\cdot \det(M) \; = \; \frac{n(n-1)^2}{d^2} \implies d \; = \; n-1.\label{eq:gcd}
\end{equation}

On the other hand, as $e_i\cdot y=y_i$, $d$ divides $\gcd\{y_i\}$, and so there are $b_i\in \Z$ such that $y_i=(n-1)b_i$. Consequently, $y=(n-1)b$ where $b=\sum b_ie_i$. Observe that $n(n-1) - (n-1)^2b^2=  v^2-y^2  = x^2  \geqslant 0$. In particular, if $n\geqslant 3$, after an automorphism of $M$, we are in one of the following two cases:
\begin{equation}
		y=0 \;\;\quad \text{ or } \;\;\quad y=(n-1)e_1 \qquad (n\geqslant 3).\label{eq:twocases}
\end{equation}
Note that in the first case $x^2=v^2=n(n-1)$, while in the latter case $x^2=v^2-y^2=n-1$.

Suppose $M=\langle 1\rangle^{k}$, or equivalently, $n=2$ and $M'=0$. Then after an automorphism of $M$ we may write $v=y=e_1+e_2$, and consequently $ v^\perp\cong \langle 2 \rangle\oplus \langle 1\rangle^{k-2}$. Next suppose $n\geqslant 3$ and $M'=\langle n-1\rangle$. From~\eqref{eq:twocases}, either $y=0$ or $y=(n-1)e_1$. Suppose $y=0$. Then $x$ is a multiple of a generator for $\langle n-1 \rangle$ and $v^\perp=\langle 1 \rangle^{k-1}$, contradicting $\det(v^\perp)=n$. So we must have $y=(n-1)e_1$. Then $x^2=n-1$ and so $x$ is a generator for $\langle n-1\rangle$. The orthogonal complement $v^\perp$ is generated by $x-e_1$ and $e_j$ ($j\neq 1$), isomorphic to $\langle n\rangle\oplus \langle 1 \rangle^{k-1}$. In summary, if $M'=0$ then $v^\perp\cong \langle 2 \rangle\oplus \langle 1\rangle^{k-2}$, and if $M'=\langle n-1\rangle$ then $v^\perp \cong \langle n\rangle\oplus \langle 1\rangle^{k-2}$.

Consulting Lemma~\ref{lemma:dinvgenus2}, suppose the $d$-invariants of $S_n^3(K)$ agree with those for $S_n^3(U)$. We claim the only lattices that occur are $\langle n\rangle\oplus \langle 1 \rangle^k$. Let us look at the case $n=1$ first; as we have assumed that $K$ has slice genus at most $1$, we know that $M'$ is either zero or $E_8$. However, $\delta(E_8)=\text{rk}(E_8)=8$ and $\delta(S_1^3(U))=-4d(S^3)=0$, contradicting~\eqref{eq:delta}. Thus $M=\langle 1 \rangle^k$. The claim then follows by induction. At each step, in which $n\geqslant 2$, $M'$ is isomorphic to $\langle n-1\rangle$, and from the argument in the previous paragraph, we must have $v^\perp \cong \langle n\rangle\oplus \langle 1\rangle^{k-2}$.

Next, suppose the $d$-invariants of $S_n^3(K)$ agree with those of $S_n^3(T_{2,3})$. The proof is again by induction. At each step, $M'$ is either $\langle n-1 \rangle$ or $\mathscr{T}_{n-1}$. We have dealt with the possibilities when $M'=\langle n-1 \rangle$, so we may always assume $M'=\mathscr{T}_{n-1}$. Let $n=2$. Then $M'=E_8$. Then either $v=x\in E_8$ is a root, or $v=y\in \langle 1 \rangle^k$. In the first case, $v^\perp\cong E_7\oplus \langle 1\rangle^{k}$. In the second case, after an automorphism, $v=y=e_1+e_2$, and so $v^\perp\cong E_8\oplus \langle 2\rangle\oplus \langle 1\rangle^{k-2}$. However, $\delta(E_8\oplus\langle 2 \rangle\oplus\langle 1 \rangle^{k-2}) = \delta(E_8)+\delta(\langle 2 \rangle)=8+1=9$, and according to~\eqref{eq:dinv23} we have $\delta(S_2^3(T_{2,3}))=7$, in contradiction to inequality~\eqref{eq:delta}. Next suppose $n\geqslant 3$ and $M'\neq\langle n-1\rangle$; thus $n\leqslant 8$. By~\eqref{eq:twocases}, either $y=0$ or $y=(n-1)e_1$. If $y=0$, then $v=x\in \mathscr{T}_{n-1}$ and $v^\perp\cong \mathscr{T}_{n}\oplus\langle1\rangle^k$ by Lemma~\ref{lemma:distinguished}.

It remains to rule out the cases in which $y=(n-1)e_1$. Here $x^2=n-1$. As $M'=\mathscr{T}_{n-1}$ is even, we must have $n$ odd. Furthermore, the reduced part of $v^\perp$ has rank $10-n$, and its root lattice $\mathsf{R}(v^\perp)=\mathsf{R}(x^\perp)\oplus\mathsf{R}(y^\perp)=\mathsf{R}(x^\perp)$ is not of full rank (recall that, by definition, the root lattice only sees the minimal part of the lattice).
According to Table~\ref{fig:tableroots}, there are no such lattices of determinant $n$ unless $n=7$ and $\mathsf{R}(v^\perp)=A_2$. This would require $\mathsf{R}(x^\perp)=A_2$, where $x\in\mathscr{T}_6=A_1\oplus A_2$; consequently $x\in A_1$, contradicting the condition that $x^2=6$.
\end{proof}

\begin{proof}[Proof of Theorem~\ref{thm:2,5} (i)]
The case $n=1$ is implied by Theorem~\ref{thm:instanton}. We continue the notation from the proof of Theorem~\ref{thm:trefoil}(i), so that the lattice under consideration is $v^\perp$ where $v = (x,y)\in M'\oplus \langle 1\rangle^k=M$ with $v^2=n(n-1)$, $\det(v^\perp)=n$, and $\det(M)=n-1$.

\textbf{Case $n=2$}.  By the case $n=1$ we have that $M'$ is either zero, $E_8$ or $\Gamma_{12}$. If $M'=0$, then $v=y=e_1+e_2$ and $v^\perp\cong \langle 2 \rangle \oplus \langle 1 \rangle^{k-1}$. If $M'=E_8$, then either $y=0$ and $x^2=2$, in which case $v^\perp\cong E_7\oplus\langle 1 \rangle^k$, or $x=0$ and $y=e_1+e_2$, in which case $v^\perp=E_8\oplus \langle 2 \rangle\oplus \langle 1 \rangle^{k-2}$. Lemma~\ref{lemma:e821} below shows that we must have $k-2\geqslant 1$ here, as required by the statement of the theorem. If $M'=\Gamma_{12}$, then either $y=0$ and $x^2=2$ or $v=y=e_1+e_2$. In the first case, $v^\perp \cong \mathscr{C}_{2}\oplus \langle 1 \rangle^k$ by Lemma~\ref{lemma:clattices}. If instead $v=e_1+e_2$, then $v^\perp\cong \Gamma_{12}\oplus \langle 2\rangle\oplus \langle 1 \rangle^{k-2}$. This completes the case $n=2$.

\textbf{Case $n=3$}. By the case of $n=2$, $M'$ is among $\langle 2 \rangle$, $E_7$, $E_8\oplus \langle 2 \rangle$, $\Gamma_{12}\oplus \langle 2\rangle$,  $\mathscr{C}_2$. The cases $\langle 2 \rangle$ and $E_7$ are handled as in the proof of Theorem~\ref{thm:trefoil} (i), and yield the possibilities $\langle 3 \rangle\oplus\langle 1 \rangle^{k-1}$ and $E_6\oplus\langle 1 \rangle^{k}$. Suppose $M'=E_8\oplus \langle 2\rangle$. Write $x=(x_1,x_2)$ where $x_1\in E_8$ and $x_2\in \langle 2\rangle$. First suppose $y=0$. Then $x^2=x_1^2+x_2^2=6$. Suppose $x_2=0$. Then $v^\perp$ contains an orthogonal copy of $\langle 2\rangle$ and so cannot have determinant 3. The only other possibility is that $x_2$ generates $\langle 2 \rangle$. Then $x_1\in E_8$ is of square 4, and hence it is primitive; therefore, there exists $w\in E_8$ with $w\cdot x_1=1$, contradicting $d=2$ from~\eqref{eq:gcd}. Thus $y\neq 0$.  By~\eqref{eq:twocases} we may suppose $y=2e_1$. Then either $x=x_1\in E_8$ is a root or $x=x_2$ generates $\langle 2\rangle$. In the first case, there is again some $w\in E_8$ such that $w\cdot x_1=1$, contradicting~\eqref{eq:gcd}. In the latter case, $v^\perp\cong E_8\oplus\langle 3\rangle\oplus \langle 1 \rangle^{k-1}$.

Essentially the same argument shows that if $M'=\Gamma_{12}\oplus \langle 2\rangle$ then $v^\perp\cong \Gamma_{12}\oplus \langle 3 \rangle\oplus \langle 1 \rangle^{k-1}$. Finally, suppose $M'=\mathscr{C}_2$. If $y=0$ then $v^\perp\cong\mathscr{C}_3\oplus\langle 1 \rangle^k$ by Lemma~\ref{lemma:clattices}. If $y=2e_1$ then $x$ is a root in $\mathscr{C}_2$, and as argued above, we obtain that $d=1$, a contradiction to~\eqref{eq:gcd}. This completes the case $n=3$.

\textbf{Case $n=4$}. By the case of $n=3$, $M'$ is among $\langle 3 \rangle$, $E_6$, $E_8\oplus \langle 3 \rangle$, $\Gamma_{12}\oplus \langle 3\rangle$,  $\mathscr{C}_3$. The cases $\langle 3 \rangle$ and $E_6$ are handled as in the proof of Theorem~\ref{thm:trefoil} (i), and yield the possibilities $\langle 4 \rangle\oplus\langle 1 \rangle^{k-1}$ and $D_5\oplus\langle 1 \rangle^{k}$.

Suppose $M'=E_8\oplus \langle 3\rangle$. Write $x=(x_1,x_2)$ where $x_1\in E_8$ and $x_2\in \langle 3\rangle$. Suppose $y=0$. Then $x^2=x_1^2+x_2^2=12$. We may assume $x_2\neq 0$. Then either $x_2$ generates $\langle 3 \rangle$ or $x=x_2$ is twice a generator for $\langle 3\rangle$. However, as $x_1\in E_8$, and $x^2=x_1^2+x_2^2$, the former case contradicts that $x_1^2$ is even; the latter case implies $v^\perp$ is unimodular. Thus we must have $y\neq 0$. By~\eqref{eq:twocases} we may assume $y=3e_1$. Then $x^2=3$, and $x=x_2$ must generate $\langle 3 \rangle$. In this case $v^\perp\cong E_8\oplus \langle 4\rangle \oplus \langle 1 \rangle^{k-1}$. However,
\[
	\delta(E_8\oplus \langle 4 \rangle \oplus \langle 1 \rangle^{k-1} ) = \delta(E_8) + \delta(\langle 4 \rangle)  =8+1=9 > \delta(S_4^3(K))
\]
for $K\in\{U,T_{2,3},T_{2,5}\}$, in contradiction to~\eqref{eq:delta}. In summary, we cannot have $M'=E_8\oplus\langle 3\rangle$.

 Next, suppose $M'=\Gamma_{12}\oplus \langle 3\rangle$. The argument to rule this case out is much the same as for $M'=E_8\oplus\langle 3\rangle$. The only difference is that we must rule out the possibility that $y=0$ and $x_2^2=3$ in a different way, as $\Gamma_{12}$ is not an even lattice. Note here that $x_1^2=x^2-x_2^2=12-3=9$.
All vectors in $\Gamma_{12}$ of squared norm $9$ are primitive, hence there exists $w\in\Gamma_{12}$ such that $w\cdot x_1=w\cdot x = 1$, contradicting $d=3$ from~\eqref{eq:gcd}.

 Next suppose $M'=\mathscr{C}_3$. If $y=0$ then $v^\perp\cong D_9\oplus \langle 1 \rangle^{k}$ or $v^\perp\cong\mathscr{C}_4\oplus\langle 1 \rangle^k$ by Lemma~\ref{lemma:clattices}. However, in the case that $v^\perp\cong D_9\oplus \langle 1 \rangle^{k}$, we have
\[
	9 \; = \; \text{rk}(D_9) \; = \; \delta(D_9\oplus\langle 1 \rangle^{k}) \; > \; \delta(S_4^3(K))
\]
for $K\in\{U,T_{2,3},T_{2,5}\}$, in contradiction to~\eqref{eq:delta} and~\eqref{eq:dinv25}. Thus $D_9\oplus \langle 1 \rangle^{k}$ cannot occur. Next suppose $y\neq 0$. Then by~\eqref{eq:twocases} we may assume $y=3e_1$, so that $x = x_1\in\mathscr{C}_{3}$ has square 3. Now, if there exists $w\in \mathscr{C}_3$ such that $w\cdot x=1$, then we contradict $d=3$ from~\eqref{eq:gcd}. On the other hand, if there is no such $w$, then by Lemma~\ref{lemma:littlelattice}, the orthogonal complement $x_1^\perp\subset \mathscr{C}_3$ is unimodular. However, $\mathscr{C}_n$ does not have any unimodular summands when $n>1$, for it is a sublattice of $\Gamma_{12}$, an indecomposable unimodular lattice. Thus we cannot have $y\neq 0$. This completes the case $n=4$.

\textbf{Case $n\geqslant 5$}. In each case, $M'$ is among $\langle n-1\rangle$, $\mathscr{T}_{n-1}$, $\mathscr{C}_{n-1}$. The first two cases are dealt with as in the proof of Theorem~\ref{thm:trefoil} (i). The third case is dealt with as was the case $M'=\mathscr{C}_3$ when $n=4$ above. That is, we rule out the possibility of $y\neq 0$ using~\eqref{eq:gcd} and the fact that $\mathscr{C}_n$ does not have any unimodular summands when $n>1$; and when $y=0$, we have $v^\perp \cong \mathscr{C}_{n}\oplus\langle 1 \rangle^k$ by Lemma~\ref{lemma:clattices}.
\end{proof}

\begin{lemma}
Let $L$ be an odd lattice, and $v\in L^*$ a primitive covector.
Then $v^\perp \subset L$ is even if and only if $v$ is characteristic.\label{lemma:evencomp}
\end{lemma}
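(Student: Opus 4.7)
The plan is to split the iff and tackle each direction separately, the converse being the substantive one.

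The easy ($\Leftarrow$) direction follows by unpacking the definition: if $v$ is characteristic and $y\in v^\perp$, the defining congruence $v\cdot y\equiv y^2\pmod 2$ specializes to $0\equiv y^2\pmod 2$, so $v^\perp$ is even. Neither primitivity of $v$ nor oddness of $L$ is needed.

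For the ($\Rightarrow$) direction, my approach is to use primitivity to split off a copy of $\Z$ from $L$ and then exploit oddness to pin down a parity. First I would argue that primitivity of $v\in L^*$ is equivalent to surjectivity of the pairing map $\phi\colon L\to\Z$, $y\mapsto v\cdot y$: in a basis $\{f_i\}$ of $L^*$ dual to a basis of $L$ one has $\phi(L)=\gcd(a_i)\cdot\Z$ where $v=\sum a_i f_i$, and $v$ is primitive iff $\gcd(a_i)=1$. Picking $x_0\in L$ with $v\cdot x_0=1$ then yields an abelian-group splitting $L=\Z x_0\oplus v^\perp$. Writing a general element as $y=nx_0+z$ with $z\in v^\perp$, the evenness of $v^\perp$ gives $y^2\equiv n^2 x_0^2\equiv n\,x_0^2\pmod 2$. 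Since $L$ is odd, some such $y$ has odd square, which forces $x_0^2$ to be odd. Then $v\cdot y=n\equiv n\,x_0^2\equiv y^2\pmod 2$ for every $y\in L$, so $v$ is characteristic.

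The only mildly delicate ingredient is the equivalence of primitivity of $v\in L^*$ with surjectivity of $\phi$, a routine divisibility argument; I do not anticipate a significant obstacle. It is worth noting that both hypotheses on $L$ and $v$ are actually used in the converse: over an even lattice the $0$-covector is characteristic while any primitive $v\in L^*$ automatically has even $v^\perp$, so oddness is necessary; and without primitivity one cannot guarantee the splitting that ultimately produces the element $x_0$ of odd square.
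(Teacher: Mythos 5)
Your proof is correct and follows essentially the same route as the paper's: both directions are handled identically, and for the converse both arguments use primitivity to produce an element $z$ (your $x_0$) with $v\cdot z=1$, observe that $z$ together with $v^\perp$ spans $L$, and use oddness of $L$ to force $z^2$ odd, whence the characteristic congruence holds on all of $L$. The only cosmetic difference is that you carry the parity computation through an explicit splitting $L=\Z x_0\oplus v^\perp$, while the paper checks the congruence on the spanning set directly.
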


Both directions are probably well-known in the lattice community.
The `if' direction was also observed in the proof of~\cite[Proposition~6.2]{BG}.

\begin{proof}
We first prove that if $v$ is characteristic, then $v^\perp$ is even.
Indeed, if $w\in v^\perp$, then
\[
w\cdot w \equiv v\cdot w = 0 \pmod 2.
\]

We now prove the converse.
Since $v$ is primitive, there exists $z$ in $L$ such that $v\cdot z = 1$.
We claim that $v^\perp$ and $z$ span $L$:
indeed, for any $w\in L$, we can write $w = (w - (v\cdot w)z) + (v\cdot w)z$, and the first summand satisfies $v\cdot (w - (v\cdot w)z) = v\cdot w - (v\cdot w)(v\cdot z) = 0$, hence lives in $v^\perp$.
By assumption, $v^\perp$ is even and $L$ is odd; since $v^\perp$ and $z$ span $L$, $z\cdot z$ must be odd, hence
\[
z\cdot z \equiv  v\cdot z = 1 \pmod 2;
\]
on the other hand, for each $w\in v^\perp$ we have $v\cdot w = 0 \equiv w\cdot w \pmod 2$. That is, $v\cdot u \equiv u\cdot u \pmod 2$ for all $u$ in $v^\perp$ and $u = z$, and since $L$ is spanned by $v^\perp$ and $z$, we obtain that $v$ is characteristic.
\end{proof}

\begin{remark}\label{rmk:oddcoefficients}
If $v$ is characteristic and primitive in the odd, indefinite, unimodular lattice ${\rm I}_{a,b}$, then, with respect to a diagonal basis $h_1,\dots,h_a,e_1,\dots,e_{b}$, all coefficients of $v$ are odd.
Indeed, in general, ${\rm Char}(L)$ has a transitive action by $2L$, and it is immediate to check that $h_1+\dots+h_a+e_1+\dots+e_{b}$ is characteristic for ${\rm I}_{a,b}$.
\end{remark}

\begin{lemma}\label{lemma:e821}
Let $K$ be a knot in $S^3$.
Then $S^3_2(K)$ does not bound $E_8\oplus \langle 2 \rangle$.
\end{lemma}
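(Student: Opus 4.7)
The strategy is to reach a contradiction by building a closed smooth spin $4$-manifold whose signature violates Rokhlin's theorem. Suppose, for contradiction, that a smooth, compact, oriented $4$-manifold $X$ with torsion-free homology has $L_X \cong E_8 \oplus \langle 2 \rangle$ and $\partial X = Y := S^3_2(K)$. Let $X'$ denote the $2$-handlebody obtained by attaching a $2$-handle to $B^4$ along $K$ with framing $+2$; then $\partial X' = Y$ and $L_{X'} \cong \langle 2 \rangle$. Form the closed smooth oriented $4$-manifold $Z := X \cup_Y (-X')$.

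The main step is to identify the intersection form of $Z$. A Mayer--Vietoris argument, using that $H_2(Y) = 0$ and that the map $H_1(Y) = \Z/2 \to H_1(X) \oplus H_1(-X')$ vanishes (its target being torsion-free), yields a short exact sequence $0 \to L_X \oplus L_{-X'} \to H_2(Z) \to \Z/2 \to 0$. A parallel Mayer--Vietoris computation shows that $H_1(Z)$ is free abelian, so by Poincar\'{e} duality and universal coefficients $H_2(Z)$ is torsion-free as well. Hence $L_Z$ is an index-$2$ integral overlattice of $L_X \oplus L_{-X'} \cong E_8 \oplus \langle 2 \rangle \oplus \langle -2 \rangle$, whose discriminant group $(\Z/2)^2$ is generated by $[v/2]$ and $[v'/2]$, where $v, v'$ are the generators of the $\langle 2 \rangle$ and $\langle -2 \rangle$ summands. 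Of the three order-$2$ cyclic extensions, only the one adding $g := v/2 + v'/2$ is integral (the other candidate glue vectors $v/2$ and $v'/2$ have self-pairings $\pm 1/2$). Thus $L_Z = \langle L_X \oplus L_{-X'}, g\rangle$ is uniquely determined, and $g^2 = 0$.

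To finish, I would verify that $L_Z$ is an even lattice: any element of $L_Z$ has the form $a + kg$ for $a \in L_X \oplus L_{-X'}$ and $k \in \{0, 1\}$, and then $(a + kg)^2 = a^2 + 2k(a \cdot g) + kg^2 \in 2\Z$ since $L_X \oplus L_{-X'}$ is even, $a \cdot g \in \Z$, and $g^2 = 0$. Consequently $w_2(Z) = 0$, so $Z$ admits a spin structure. Its signature is $\sigma(Z) = \sigma(L_X) + \sigma(L_{-X'}) = 9 - 1 = 8$, which is not divisible by $16$, contradicting Rokhlin's theorem for closed smooth spin $4$-manifolds. The main subtlety lies in the unique identification of $L_Z$ (ruling out the other two candidate index-$2$ overlattices by an integrality check on the glue); once this is in hand, the Rokhlin obstruction produces the contradiction directly.
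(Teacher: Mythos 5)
Your proof is correct, and while it shares the paper's basic setup --- cap off the filling with the surgery trace to get a closed $4$-manifold, then invoke Rokhlin --- it extracts the contradiction by a genuinely different mechanism. The paper glues $-W$ to $X_2(K)$, uses Rokhlin to conclude that the resulting closed form \emph{cannot} be even (its signature is $-8$), hence must be ${\rm I}_{1,9}$, and then derives the contradiction from the arithmetic of characteristic covectors: the class of the $2$-handle cocore has even orthogonal complement, so it is characteristic (Lemma~\ref{lemma:evencomp} and Remark~\ref{rmk:oddcoefficients}), forcing its square to be $\equiv 0 \pmod 8$ rather than $2$. You instead run the logic in the opposite direction: a discriminant-form computation shows that $E_8\oplus\langle 2\rangle\oplus\langle-2\rangle$ has a \emph{unique} integral unimodular overlattice of index $2$ (only the glue vector $(v+v')/2$ is integral), that this overlattice is even, and hence that $Z$ is spin with $\sigma=8$, which contradicts Rokhlin directly. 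Both arguments are sound; yours replaces the paper's characteristic-vector endgame with standard lattice gluing, which is arguably cleaner and makes the even unimodular lattice $E_8\oplus U$ visible, while the paper's version generalizes more readily to situations (as elsewhere in Section~\ref{sec:obstructions}) where one wants to exploit the diagonal form ${\rm I}_{1,b}$ and its characteristic vectors. One small point worth making explicit in your write-up: the injection $L_X\oplus L_{-X'}\hookrightarrow L_Z$ from Mayer--Vietoris is an isometric embedding because classes supported in $X$ and in $-X'$ can be represented by disjoint cycles; you use this implicitly when identifying $L_Z$ as an overlattice of the direct sum.
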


\begin{proof}
Suppose the contrary, and let $W$ be a filling of $Y$, with torsion-free homology and intersection form $E_8\oplus \langle 2 \rangle$.
Let $X_2(K)$ denote the 4-manifold with boundary $S^3_2(K)$ obtained by attaching a 2-handle to $S^3=\partial B^4$ along $K$ with framing $+2$. Let $X = -W \cup X_2(K)$. Since $H_1(W)$ and $H_1(X_2(K))$ are torsion-free, so is $H_1(X)$;
then $X$ is spin if and only if $L_X$ is even.
But $L_X$ has signature $-8$, so $X$ cannot be spin by Rokhlin's theorem.
It follows that $L_X = {\rm I}_{1,9}$.

The generator of $H_2(X_2(K))$ is sent to an element $x\in L_X$ of square $2$, and $L_W= x^\perp \subset L_X$, since $\det L_W = 2$ (see Lemma~\ref{lemma:littlelattice}).
As $L_W$ is even, by Lemma~\ref{lemma:evencomp} and Remark~\ref{rmk:oddcoefficients}, all coordinates of $x$ must be odd, i.e. $x = ah-(b_1e_1+\dots+b_9e_9)$ with $a,b_1,\dots,b_9$ odd;
but then we obtain a contradiction, as $2 = x^2= a^2 -\sum b_i^2 \equiv 1-9 \equiv 0 \pmod 8$.
\end{proof}


\section{Constructions}\label{sec:constructions}

In this section we provide the constructions for parts (ii) of Theorems \ref{thm:trefoil} and \ref{thm:2,5}.
We start by excluding the trivial lattice: this follows either from Donaldson's theorem (because $E_8$ does not embed in a definite diagonal lattice) or from a $d$-invariants computation (because $d(S^3_{1}(T_{2,3})) = d(S^3_{1}(T_{2,5})) = -2$, while the correction term vanishes on 3-manifolds that are null-cobordant).
Note that for $n$-surgery $S_n^3(K)$ on a knot $K$, there is a simply-connected smooth 4-manifold $X_n(K)$ filling it with intersection form $\langle n \rangle$, obtained by attaching an $n$-framed 2-handle to the boundary of a 4-ball.
Upon connect-summing with copies of $\cp$, we obtain the lattices $\langle n\rangle \oplus \langle 1 \rangle^k$ listed in Theorems~\ref{thm:trefoil} and~\ref{thm:2,5}. Next, we recall some standard constructions that prove Theorem~\ref{thm:trefoil} (ii).

Given a knot $K$, we will write $mK$ as a shorthand for the connected sum of $m$ copies of $K$.

\subsection{Seifert spaces and binary polyhedral spaces}

\begin{table}
\centering
\begin{tabular}{ l | c | c | c | l }
 Surgery & Other name & Seifert invariants & Binary Polyhedral &  Lattice \\
  \hline
&&&&\\
$S_1(T_{2,3})$ & $-\Sigma(2,3,5)$ & $(2; \frac{1}{2} , \frac{2}{3},\frac{4}{5})$ & $-SU(2)/I^\ast$  & $\mathscr{T}_1=E_8$\\
		&&&&\\
$S_2^3(T_{2,3})$ & $-\Sigma(2,3,4)$ &  $(2; \frac{1}{2} , \frac{2}{3},\frac{3}{4})$ & $-SU(2)/O^\ast$   & $\mathscr{T}_2=E_7$\\
&&&&\\
$S^3_3(T_{2,3})$  & $-\Sigma(2,3,3)$ & $(2; \frac{1}{2} , \frac{2}{3},\frac{2}{3})$ & $-SU(2)/T^\ast$ & $\mathscr{T}_3=E_6$\\
&&&&\\
$S^3_4(T_{2,3})$  & $P(3,1)$ & $(2; \frac{1}{2} , \frac{1}{2},\frac{2}{3})$ & $-SU(2)/D_{12}^\ast$ & $\mathscr{T}_4=D_5$\\
&&&&\\
$S_5^3(T_{2,3})$  & $L(5,1)$ & $(2; \frac{1}{2} , \frac{2}{3})$  & $-SU(2)/\Z_5$ & $\mathscr{T}_5=A_4$ \\
&&&&\\
$S_6^3(T_{2,3})$ & $L(2,1)\#L(3,1)$ &  & & $\mathscr{T}_6=A_1\oplus A_2$ \\
&&&&\\
$S_7^3(T_{2,3})$ & $L(7,5)$ & $(-2;-\frac{1}{2},-\frac{1}{3})$ & & $\mathscr{T}_7=\Lambda(2,4)$ \\
&&&&  \\
  \hline  
\end{tabular}
\caption{The first several positive integer surgeries on the right-handed trefoil, with the distinguished positive definite lattices $\mathscr{T}_n$ that they bound.}\label{fig:table1}
\end{table}

The $n$-surgeries of $T_{2,3}$ for $1\leqslant n \leqslant 7$ are listed in Table~\ref{fig:table1}. The Seifert-fibered descriptions of these manifolds are from Moser~\cite{moser}, who showed that $(p/q)$-surgery on an $(r,s)$-torus knot is $-L(r,s)\#{-L(s,r)}$ if $p/q=rs$, $-L(|p|,qs^2)$ if $|p-qrs|=1$, and is otherwise a Seifert-fibered space with at most three singular fibers and base orbifold the 2-sphere.
(For us, $L(p,q)$ is $(-p/q)$-surgery along the unknot.)
For $n\neq 6$,
\[
    S_n^3(T_{2,3}) \; = \; M\left(-1;-\frac{1}{2},-\frac{1}{3},\frac{1}{n-6}\right),
\]
the right-hand side denoting the Seifert-fibered space with Seifert invariants $\mathbf{b}=(b;b_1/a_1,\ldots,b_k/a_k)$. Here $b\in\Z$ and $b_i/a_i$ are reduced fractions with $a_i>0$. The homeomorphism class of a Seifert-fibered space is classified by the Euler number $e=b-\sum b_i/a_i$ and the reductions of $b_i/a_i$ modulo 1. Thus $(-1;-1/2,-1/3,-1/5)$ and $(2;1/2,2/3,4/5)$ both determine $S_1^3(T_{2,3})$. Further, $M(b;b_1/a_1,\ldots,b_k/a_k)$ bounds a plumbed 4-manifold $X_\mathbf{b}$ as described in Figure~\ref{fig:seifertdiagram} for $k=3$, which also determines a surgery diagram. Here the integers $t_{i,j}$ come from the Hirzebruch--Jung continued fraction for $a_i/b_i$:
\begin{equation*}
\frac{a_i}{b_i} = t_{i,1}-\cfrac{1}{t_{i,2}-\cfrac{1}{\cdots-\frac{1}{t_{i,m_i}}}}
\end{equation*}
The lattices appearing in the right-hand column of Table~\ref{fig:table1} are realized by the plumbings $X_\mathbf{b}$ where $\mathbf{b}$ is given in the corresponding row of Table~\ref{fig:table1}. Upon connect summing these examples with copies of $\cp$ we obtain all lattices listed in Theorem~\ref{thm:trefoil}. This completes the proof of Theorem~\ref{thm:trefoil}.

Six of the seven surgeries in Table~\ref{fig:table1} are distinguished for admitting spherical geometry. In fact, each is realized, after possibly reversing orientation, as a binary polyhedral space, i.e. a quotient of $SU(2)$ by a finite subgroup $\Gamma$, see e.g.~\cite[Section~1.2.1]{saveliev}. Each such space is the boundary of a minimal resolution of a Kleinian singularity $\C^2/\Gamma$ and its intersection form is a negative definite root lattice. For $n\leqslant 5$, each such resolution is orientation-reversing diffeomorphic to the corresponding plumbing from above.

Similar to the Seifert descriptions for surgeries on $T_{2,3}$, Moser's results imply that for $1\leqslant n\leqslant 9$, $S_n^3(T_{2,5})= M(-1; -1/2,-2/5,1/(n-10))$, and the plumbings  $X_\mathbf{b}$ with $\mathbf{b}=(2;1/2,3/5,(9-n)/(10-n))$ realize the lattices $\mathscr{C}_n$. For $n=10$, we have $S_{10}^3(T_{2,5})=L(2,1)\# L(5,3)$; as $A_1$ fills $L(2,1)$ and $\Lambda(3,2)$ fills $L(5,3)$, the lattice $\mathscr{C}_{10}$, defined as the direct sum thereof, fills $S_{10}^3(T_{2,5})$. Finally, for $n=11$ we have $S_{11}^3(T_{2,5})=L(11,7)$, and $\Lambda(3,4)$ fills this lens space. After connect-summing these examples with copies of $\cp$, we obtain all the lattices $\mathscr{C}_n\oplus \langle 1 \rangle^k$ listed in Theorem~\ref{thm:2,5}.

\begin{figure}
\centering
\begin{tikzpicture}[scale=0.8]
	\draw[thick,rounded corners=6pt]
(-1,0) -- (0,0) -- (0,2.25) -- (-6,2.25) -- (-6,0) -- (-1,0);
\draw[white,line width=1mm] (-4.45,0) -- (-4.05,0);
\draw[white,line width=1mm] (-.95,0) -- (-0.55,0);
\draw[white,line width=1mm] (-2.3,0) -- (-2.7,0);
\draw[thick,rounded corners=6pt]
(-5.25,0.2) -- (-5.25,1) -- (-4.25,1) -- (-4.25,-1) -- (-5.25,-1) -- (-5.25,-.2);
\draw[thick,rounded corners=6pt]
(-1.75,0.2) -- (-1.75,1) -- (-.75,1) -- (-.75,-1) -- (-1.75,-1) -- (-1.75,-.2);
\draw[thick,rounded corners=6pt]
(-3.5,0.2) -- (-3.5,1) -- (-2.5,1) -- (-2.5,-1) -- (-3.5,-1) -- (-3.5,-.2);
\node[font=\small] at (-6.35,1.25) {$b$};
\node[font=\small] at (-4.75,1.5) {$a_1/b_1$};
\node[font=\small] at (-1.25,1.5) {$a_3/b_3$};
\node[font=\small] at (-3,1.5) {$a_3/b_3$};
\end{tikzpicture}\qquad
\begin{tikzpicture}
	\draw (0,0) -- (1,1);
	\draw (0,0) -- (1,0);
	\draw (0,0) -- (1,-1);
	\draw (1,1) -- (2.75,1);
	\draw (1,0) -- (2.75,0);
	\draw (1,-1) -- (2.75,-1);
	\draw (4.25,1) -- (5,1);
	\draw (4.25,0) -- (5,0);
	\draw (4.25,-1) -- (5,-1);
	
	\draw[fill=black] (0,0) circle(.06);
	\draw[fill=black] (1,0) circle(.06);
	\draw[fill=black] (1,1) circle(.06);
	\draw[fill=black] (1,-1) circle(.06);
	\draw[fill=black] (2,0) circle(.06);
	\draw[fill=black] (2,1) circle(.06);
	\draw[fill=black] (2,-1) circle(.06);
	\draw[fill=black] (5,0) circle(.06);
	\draw[fill=black] (5,1) circle(.06);
	\draw[fill=black] (5,-1) circle(.06);
	
	\node[font=\small] at (-.45,0) {$b$};
	\node[font=\small] at (1,1.4) {$t_{1,1}$};
	\node[font=\small] at (2,1.4) {$t_{1,2}$};
	\node at (3.5,1) {$\ldots$};
	\node[font=\small] at (5,1.4) {$t_{1,m_1}$};
	\node[font=\small] at (1,0.4) {$t_{2,1}$};
	\node[font=\small] at (2,0.4) {$t_{2,2}$};
	\node at (3.5,0) {$\ldots$};
	\node[font=\small] at (5,0.4) {$t_{2,m_2}$};
	\node[font=\small] at (1,-.6) {$t_{3,1}$};
	\node[font=\small] at (2,-.6) {$t_{3,2}$};
	\node at (3.5,-1) {$\ldots$};
	\node[font=\small] at (5,-.6) {$t_{3,m_3}$};
\end{tikzpicture}
\caption{}\label{fig:seifertdiagram}
\end{figure}

\subsection{PL spheres and rational complex curves}

In the next two sections, we will study how fillings of 3-manifolds obtained by doing surgery along a knot in $S^3$ can be constructed starting from embedded PL spheres.

Consider an embedded PL sphere $S$ in a closed, oriented, 4-manifold $X$.
The surface $S$ is smooth away from a finite number of points $p_1,\ldots,p_m$; at a neighborhood of $p_i$, $S$ is the cone over a knot $K_i\subset S^3$.
We say that the singularity at $p_i$ is of {\emph{type $K_i$}}, and that $K_i$ is the \emph{link} of $S$ at $p_i$.
Call $n = [S]\cdot [S]$ the self-intersection of $S$;
then it is a good exercise to show that a regular (closed) neighborhood $N$ of $S$ is diffeomorphic to $X_n(K)$, the trace of $n$-surgery along the knot $K := K_1\#\dots\# K_m$. In particular, $W_S = -(X\setminus{\rm Int}(N))$ has boundary $S^3_n(K)$.
Furthermore, if $n>0$, then $W_S$ is positive definite.
With a conscious abuse of terminology, we will refer to $W_S$ as the \emph{complement} of $S$.

We will study the topology of complements of PL spheres in the next section; here we pause instead for a short trip in complex algebraic geometry, which is an excellent source of such objects.
Classical references are, for instance,~\cite{Wall} for general treatment of curve singularities, and ~\cite{namba} for the study of plane singular curves; we also suggest~\cite{moe} for a more modern, and more topologically flavored, exposition of some of the material from~\cite{namba}, and for a quicker introduction to singularities of complex curves.

An irreducible complex curve $C$ in a $\cp$ is defined by an equation $F(x,y,z) = 0$, where $F$ is an irreducible homogeneous polynomial;
the degree of $F$ is the degree of the curve.
We say that $C$ is rational and cuspidal if $C$ is an embedded PL sphere in $\cp$.
For instance, if $a<b$ are coprime positive integers, the curve defined by the equation $x^az^{b-a} - y^b = 0$ is rational and cuspidal;
the link at $(0:0:0)$ is the torus knot $T_{a,b}$.

Each singular point $p$ comes with a multiplicity, that counts the number of local intersections of $C$ with a line;
the multiplicity of the singularity of type $T_{a,b}$ (with $a<b$) is $a$.
We can interpret the singularity in terms of blow-ups: when we blow up $\cp$ at a singular point of a degree-$d$ curve $C$ that has multiplicity $a$, the homology class of the proper transform of $C$ in the blow-up is $dh - ae$, where $h$ is the line class in $H_2(\cp)$ and $e$ is the homology class of the exceptional divisor in the blow-up.
Similarly, if we blow up at a $k$-fold point (e.g. a transverse double point), the homology class of the proper transform gains a summand $-ke$.

Finally, recall that every curve singularity can be resolved by blow-ups;
the collection of multiplicities of the (non-trivial) singularities encountered in the process forms the \emph{multiplicity sequence} of the singularity;
for example, a singularity of type $T_{3,5}$ has multiplicity sequence $[3,2]$: the multiplicity of $T_{3,5}$ is $5$, and blowing up yields a curve with singularity $T_{2,3}$;
the latter has multiplicity $2$, and a single blow-up resolves it.
Analogously, one sees that the multiplicity sequence of the singularity of type $T_{a,ka+1}$ is the string $[a,\dots,a]$ of length $k$.

\subsection{Fillings constructed from PL spheres}

As mentioned at the beginning of the previous section, we will look here at an alternative perspective one can take on the trefoil lattices $\mathscr{T}_n$, which shows that they also fill surgeries along the cinquefoil knot $T_{2,5}$.
Indeed, we describe here a more general framework to realize lattices as fillings by means of PL spheres.

We focus here on PL spheres in closed, oriented 4-manifolds $X$ with $H_1(X) = 0$, $b^+(X) = 1$, and odd intersection form; for instance, a blow-up of $\cp$ gives an example of such a manifold.
Recall that, if $S$ is such a sphere, we called $W_S$ the complement of (a small open, regular neighborhood of) $S$ in $X$.

Clearly, the intersection form $L_S$ of $W_S$ is the orthogonal of $[S]$ in $H_2(X)$. Furthermore,
by Lemma~\ref{lemma:littlelattice}, if $[S]$ is primitive, $\det L_S = n$, and, using excision for the pairs $(X,N)$ and $(W_S,\partial W_S)$ and the long exact sequence for a pair, one sees that $W_S$ has torsion-free homology.
(Note that in general $\det L_S$ divides $n$.)

Vice-versa, fix $n>0$ and a knot $K\subset S^3$. If $W$ is a positive definite filling of $S^3_n(K)$ with torsion-free homology, then $X = X_n(K) \cup -W$ is a closed 4-manifold with torsion-free homology, which contains a PL embedded sphere of square $n$, whose unique singularity is a cone over $K$.
(If $K$ is a connected sum, one can split the singularity into singularities that are cones over the connected summands of $K$.)
By surgery along loops in $X$, we can also ensure that $H_1(X) = 0$.
By construction, $b^+(X) = 1$, and by Donaldson's Theorem B of~\cite{d-connections,donaldson-orientations}, $X$ is not spin.
Since $H_1(X) = 0$, this implies that the intersection form of $X$ is odd, and hence diagonal.

In the following statement, write $h$ for a generator of $H_2(X)$, where $X$ is any homology $\cp$.


\begin{prop}\label{prop:PLspheres}
Let $K$ be either $T_{2,3}$, $T_{2,5}$, or $T_{3,4}$.
Then there is a PL sphere $S$ in a homotopy $\cp$ in the homology class $3h$ with a unique singularity of type $K$.
\end{prop}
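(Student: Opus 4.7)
For $K=T_{2,3}$, I would take the projective cuspidal cubic $\{y^2z=x^3\}\subset\mathbb{CP}^2$: it is a topologically embedded $2$-sphere of self-intersection $9$ in the class $3h$, with a single singular point at $[0:0:1]$ which is a cone on the right-handed trefoil. This handles the first case directly in $\mathbb{CP}^2$ itself, without passing to a different homotopy $\mathbb{CP}^2$.

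For $K\in\{T_{2,5},T_{3,4}\}$, there is no such algebraic realization, because the genus formula $(p-1)(q-1)=(d-1)(d-2)$ for a rational plane curve with a single $T_{p,q}$-cusp has no integer solution for $(p,q)\in\{(2,5),(3,4)\}$. The plan is therefore to realize $S$ in a \emph{different} homotopy $\mathbb{CP}^2$, obtained from a blow-up of $\mathbb{CP}^2$ by a rational blow-down. Concretely, I would begin from a rational cuspidal curve $C\subset\mathbb{CP}^2$ of some degree $d>3$ whose cusps consist of one of type $K$ together with auxiliary $T_{2,3}$-cusps; such curves are classical, and for $K=T_{2,5}$ a rational cuspidal quartic with cusps of type $T_{2,5}$ and $T_{2,3}$ suffices (the genus defects $2+1$ matching the arithmetic genus $3$), while for $K=T_{3,4}$ a rational cuspidal curve of somewhat higher degree with a $T_{3,4}$-cusp and several $T_{2,3}$-cusps plays the analogous role.

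Having blown up $\mathbb{CP}^2$ at each auxiliary cusp the correct number of times to resolve it, the proper transform $\tilde{C}$ is a PL sphere in some $\mathbb{CP}^2\#N\overline{\mathbb{CP}}^2$ carrying only a cusp of type $K$. The key step is then to identify, among the exceptional divisors and proper transforms of auxiliary lines through the cusps, a negative-definite plumbing configuration $P$ whose boundary is a lens space bounding a rational homology ball (for instance, via Lisca's classification). Rationally blowing down $P$ produces a closed $4$-manifold $X'$; Mayer--Vietoris and van Kampen calculations should show that $X'$ is simply connected with $b_2(X')=1$ and odd intersection form, hence a homotopy $\mathbb{CP}^2$ by Freedman's theorem, and a bookkeeping of intersection numbers should show that the image of $\tilde{C}$ is a PL sphere of self-intersection $9$ in the class $3h$, still carrying only the cusp of type $K$.

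The main obstacle will be the combinatorial design: one must choose the auxiliary curves, the order of blow-ups, and the plumbing configuration $P$ so that simultaneously (a) $P$ bounds a rational homology ball, (b) the rational blow-down leaves $X'$ simply connected with exactly $b_2=1$, and (c) the self-intersection of the proper transform descends to $9$ and its class to $3h$. I expect this to be most delicate for $K=T_{3,4}$, since the higher genus of the cusp forces a longer chain of blow-ups and tighter constraints on the selection of the auxiliary configuration.
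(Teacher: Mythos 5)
Your first case ($K=T_{2,3}$, the cuspidal cubic in $\cp$ itself) is exactly the paper's argument. For $K=T_{2,5}$ and $K=T_{3,4}$, however, what you give is a program rather than a proof, and its key step is both unexecuted and obstructed in its most natural instantiation. The paper's route is far more direct and does not pass through cuspidal curves at all: one observes that $S^3_9(T_{2,5})=L(9,4)$ bounds a rational homology ball $W$ built from one $1$-handle and one $2$-handle (Lecuona/Baker), and likewise $S^3_9(T_{3,4})$ bounds the rational homology ball of Figure~\ref{f:T349}; gluing the surgery trace $X_9(K)$ to $-W$ along the common boundary produces a closed $4$-manifold with no $1$-handles and $\chi=3$, hence a homotopy $\cp$, and the cone on $K$ capped off by the core of the $2$-handle is the required PL sphere of square $9$, i.e.\ in the class $3h$. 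No ambient blow-up of $\cp$, no choice of auxiliary lines, and no rational blow-down bookkeeping is needed.

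The gap in your plan is concrete. Take $K=T_{2,5}$ and the quartic with one $T_{2,5}$-cusp and one $T_{2,3}$-cusp: after blowing up the auxiliary cusp, the proper transform $\tilde C$ lies in $\cp\#\cpbar$ in the class $4h-2e_1$ with self-intersection $12$. A rational blow-down of a configuration $P$ \emph{disjoint} from $\tilde C$ preserves the self-intersection of $\tilde C$, so you can never reach square $9$ this way; and in $\cp\#\cpbar$ there is in any case no suitable negative-definite configuration disjoint from $4h-2e_1$ to blow down (the only classes of square $-4$ are $\pm2e_1$, which are imprimitive and pair nontrivially with $\tilde C$). If instead $\tilde C$ meets $P$, then its ``image'' after the blow-down is not a closed surface until you cap it off inside the rational ball, which is a genuinely different construction that you have not specified. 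The $T_{3,4}$ case is worse: a rational quartic with a single $T_{3,4}$-cusp exists (its $\delta$-invariant $3$ equals the arithmetic genus), so there are no auxiliary cusps to blow up, and you are left needing to pass from a square-$16$ sphere in class $4h$ to a square-$9$ sphere in class $3h$ with no mechanism on the table. Since you yourself flag the ``combinatorial design'' as unresolved, the two nontrivial cases of the proposition remain unproved in your write-up.
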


\begin{proof}
When $K = T_{2,3}$, we may take $S$ to be the cuspidal cubic in $\cp$, i.e. the zero set of the polynomial $x^2z - y^3$.

When $K = T_{2,5}$, $S^3_9(K) = L(9,4)$; $L(9,4)$ is obtained as surgery along a knot in $S^1\times S^2 = \partial(S^1\times D^3)$~\cite[Theorem 1.3]{LecuonaBaker}, so it bounds a rational homology ball $W$ constructed with one 1-handle and 2-handle;
gluing $X_9(K)$ and $-W$ along their common boundary yields $X$, a homotopy $\cp$; indeed, $X$ has a handle decomposition with no 1-handles, and $\chi(X) = 3$.

When $K = T_{3,4}$, $S^3_9(K)$ bounds the rational homology ball shown in Figure~\ref{f:T349}, and the same argument as above shows that $X_9(K)$ embeds in a homotopy $\cp$.
\end{proof}

\begin{figure}
\begin{center}
\labellist
\pinlabel $-1$ at 210 68
\endlabellist
\includegraphics[scale=0.8]{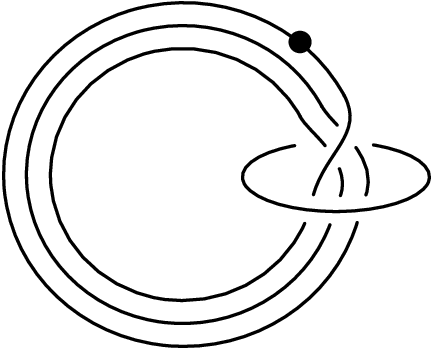}
\end{center}
\caption{A rational homology ball filling $S^3_9(T_{3,4})$.
Adding a $-1$-framed meridian to the dotted circle gives an embedding of $-X_9(T_{3,4})$ in $\overline{\cp}$.}\label{f:T349}
\end{figure}

\begin{remark}
In the case when $K = T_{2,5}$ and $K = T_{3,4}$ one can show using Kirby calculus that we can choose the gluing diffeomorphism in such a way as to obtain $\cp$.
\end{remark}

Given $X$ and $S$ as in Proposition~\ref{prop:PLspheres}, we can produce primitive homology classes in blow-ups of $X$, by blowing up along points of $S$.
Recall that we write $h$ for the generator of $H_2(X)$. When blowing up, we will write $e_1,e_2,\dots$ for the classes of exceptional divisors. The lattice of $X$ blown up $\ell$ times is then isomorphic to $\text{I}_{1,\ell}$, the unimodular odd lattice of rank $1+\ell$ and signature $1-\ell$, with diagonalizing basis $h,e_1,\ldots,e_\ell$ such that $h^2=1$ and $e_i^2=-1$. Henceforth this isomorphism between $\text{I}_{1,\ell}$ and the lattice of $\smash{X\# \ell \cpbar}$ will be implicit.

Let $ 1\leqslant n\leqslant 8$. We realize a PL sphere $S_n$ in a $(9-n)$-fold blow-up $X$ by blowing up at $9-n$ generic points along the surface $S$ provided by Proposition~\ref{prop:PLspheres}. The PL sphere $S_n$ has the same type of singularity as $S$. The primitive homology class $[S_n]=3h - e_1 - \dots - e_{9-n} \in \text{I}_{1,9-n}$ has $[S_n] \cdot [S_n] = n > 0$, and in particular its complement is negative definite. For the following statement, we recall our convention from Section \ref{sec:lattices} that $\mathscr{T}_8=\langle 8\rangle$.

\begin{lemma}\label{lemma:Tnorthogonal}
For $1 \leqslant n \leqslant 8$, the orthogonal complement of $[S_n]\in {\emph{I}}_{1,9-n}$ is isomorphic to $-\mathscr{T}_n$.
\end{lemma}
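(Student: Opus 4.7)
The plan is to verify that $-[S_n]^\perp$ is an even positive definite lattice of rank $9-n$ and determinant $n$, and then invoke Lemma~\ref{lemma:evenlattices} to identify it with $\mathscr{T}_n$.

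Write $v = [S_n] = 3h - e_1 - \cdots - e_{9-n} \in {\rm I}_{1,9-n}$. Two basic observations: $v$ is primitive, since the coefficient $-1$ on $e_1$ is a unit in $\Z$; and $v \cdot v = 9 - (9-n) = n > 0$. Primitivity says that $d := \gcd\{v \cdot w : w \in {\rm I}_{1,9-n}\} = 1$, so Lemma~\ref{lemma:littlelattice} gives $\det v^\perp = (v^2/d^2)\cdot\det {\rm I}_{1,9-n} = n$. Combined with the signature $(1,9-n)$ of ${\rm I}_{1,9-n}$ and the fact that $v^2 > 0$, this also shows that $v^\perp$ is negative definite of rank $9-n$; equivalently, $-v^\perp$ is positive definite of rank $9-n$ and determinant $n$.

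For evenness I would apply Lemma~\ref{lemma:evencomp}: since $v$ is primitive in the odd lattice ${\rm I}_{1,9-n}$, the complement $v^\perp$ is even if and only if $v$ is characteristic. This in turn reduces to a short mod-$2$ check on the orthogonal basis $h, e_1, \ldots, e_{9-n}$: the coefficients of $v$ relative to this basis are $3$ and $-1$, and the diagonal entries of the Gram matrix are $1$ and $-1$, both odd. Granting this, $-v^\perp$ is an even positive definite lattice satisfying $\det + {\rm rk} = n + (9-n) = 9$, so Lemma~\ref{lemma:evenlattices} identifies it with $\mathscr{T}_n$, yielding the claim.

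I do not anticipate any serious obstacle; essentially all the content has been packaged into Lemmas~\ref{lemma:littlelattice}, \ref{lemma:evencomp}, and \ref{lemma:evenlattices}, and what is left is a straightforward verification. The only mild point to be alert to is the borderline case $n = 8$, where $\mathscr{T}_8 = \langle 8 \rangle$ is even (any generator squares to $8$) and rank one, so Lemma~\ref{lemma:evenlattices} does still apply.
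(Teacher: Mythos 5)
Your proof is correct, but it takes a genuinely different route from the paper's. The paper simply exhibits an explicit basis of $[S_n]^\perp$ in each case: for $1\leqslant n\leqslant 6$ the vectors $e_1-e_2,\dots,e_{8-n}-e_{9-n}$ and $h-e_1-e_2-e_3$, which realize the root diagram of $-\mathscr{T}_n$; for $n=7$ the vectors $h-2e_1-e_2$ and $e_1-e_2$ generating $-\Lambda(2,4)$; and for $n=8$ the single vector $h-3e_1$ of square $-8$. You instead argue abstractly: primitivity of $[S_n]$ plus Lemma~\ref{lemma:littlelattice} gives $\det [S_n]^\perp = n$, the signature count gives negative definiteness of rank $9-n$, the characteristic-vector criterion of Lemma~\ref{lemma:evencomp} gives evenness (your mod-$2$ check on the basis is the right one, and is consistent with Remark~\ref{rmk:oddcoefficients}), and then Lemma~\ref{lemma:evenlattices} pins down the isomorphism type. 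Each step checks out, including the borderline case $n=8$, where $\langle 8\rangle$ is indeed the unique even rank-one lattice of determinant $8$. The trade-off: your argument is uniform in $n$ and avoids case-by-case basis manipulations, but it leans on the classification table behind Lemma~\ref{lemma:evenlattices} (stated in the paper without proof, citing Conway--Sloane), whereas the paper's computation is self-contained and, as a by-product, hands you explicit generators of each $-\mathscr{T}_n$ inside ${\rm I}_{1,9-n}$, which is convenient for the subsequent constructions (e.g.\ Lemma~\ref{lemma:double}). Note that the paper itself observes, in the proof of Lemma~\ref{lemma:distinguished}, that the identification of the $\mathscr{T}_n$ can alternatively be routed through Lemma~\ref{lemma:evenlattices}, so your strategy is very much in the spirit of the paper's toolkit.
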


\begin{proof}
When $1 \leqslant n\leqslant 6$, $[S_n]^\perp$ is spanned by $e_1-e_2, \dots, e_{8-n}-e_{9-n}$ and $h-e_1-e_2-e_3$, which generate the root lattice $-\mathscr{T}_n$. When $n=7$, it is spanned by $h-2e_1-e_2$ and $e_1-e_2$, which generate the lattice $-\Lambda(2,4)$. Finally, when $n=8$,  $[S_n]^\perp$ is spanned by $h-3e_1$, which has square $-8$.
\end{proof}

\begin{corollary}
For $1\leqslant n \leqslant 8$, $\mathscr{T}_n$ fills $S^3_n(K)$ for $K = T_{2,3}, T_{2,5}, T_{3,4}$.
\end{corollary}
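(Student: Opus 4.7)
The corollary is essentially a packaging result, combining Proposition~\ref{prop:PLspheres} with Lemma~\ref{lemma:Tnorthogonal} and the PL-sphere-to-filling framework set up at the beginning of Section~4.2. My plan is to proceed as follows.

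For each of $K = T_{2,3}, T_{2,5}, T_{3,4}$, I first invoke Proposition~\ref{prop:PLspheres} to produce a PL sphere $S \subset X$, where $X$ is a homotopy $\cp$, with $[S] = 3h$ and a unique singularity of type $K$. The singular point is isolated, so I may choose $9-n$ distinct points lying in the smooth locus of $S$ and blow up $X$ at these points. The resulting PL sphere $S_n \subset X' := X \# (9-n)\cpbar$ (the proper transform) then has the same singularity type $K$, and its homology class is $[S_n] = 3h - e_1 - \dots - e_{9-n} \in \text{I}_{1,9-n}$, with $[S_n]^2 = 9 - (9-n) = n$.

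Next, I apply the complement construction $W_{S_n} = -(X'\setminus \Int N)$ described before Proposition~\ref{prop:PLspheres}. Since the unique singularity of $S_n$ is of type $K$, a regular neighborhood of $S_n$ is diffeomorphic to $X_n(K)$, hence $\partial W_{S_n} = S^3_n(K)$. The hypothesis $n > 0$ forces $W_{S_n}$ to be positive definite, and the class $[S_n]$ is visibly primitive in $\text{I}_{1,9-n}$ (the $e_1$-coefficient is $-1$), so the discussion before the proposition guarantees that $W_{S_n}$ has torsion-free homology and that its intersection form, as a lattice, is obtained from the orthogonal complement $[S_n]^\perp \subset H_2(X')$ by reversing the sign of the pairing. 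Lemma~\ref{lemma:Tnorthogonal} identifies $[S_n]^\perp \cong -\mathscr{T}_n$, so $L_{W_{S_n}} \cong \mathscr{T}_n$, proving the claim.

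The only delicate point, and the one I would double-check carefully, is that the $9-n$ blow-up points can be chosen in the smooth locus of $S$ in sufficiently general position that the proper transform is again an embedded PL sphere whose only singularity is the original cone on $K$, so that the homology class and singularity data really are as claimed. This is standard (at a smooth point of $S$, the proper transform under a blow-up is obtained by removing a disk neighborhood and gluing in a sphere of self-intersection $-1$ meeting the exceptional divisor transversely once), but it is the one nontrivial bit of bookkeeping.
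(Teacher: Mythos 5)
Your proposal is correct and follows exactly the paper's route: blow up the PL sphere of Proposition~\ref{prop:PLspheres} at $9-n$ generic smooth points to get the class $3h-e_1-\dots-e_{9-n}$, take the complement $W_{S_n}$ using the framework at the start of the subsection, and identify its intersection form via Lemma~\ref{lemma:Tnorthogonal}. The paper treats the corollary as an immediate consequence of precisely this discussion, so there is nothing to add.
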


After connect-summing the realizations in this corollary with copies of $\cp$, we obtain the lattices $\mathscr{T}_n\oplus \langle 1\rangle^k$ listed in Theorem~\ref{thm:2,5}. We have also recovered all non-diagonal lattices in Theorem~\ref{thm:trefoil}.

We now turn to realising $E_8\oplus\langle 2\rangle\oplus\langle 1\rangle$ and $E_8\oplus\langle 3\rangle$ as intersection forms of fillings of $S^3_2(K)$ and $S^3_3(K)$ respectively, for $K = T_{2,5}$. We consider a few other knots along the way. For the following statement, when $k\leqslant \ell$, we view $\text{I}_{1,k}$ as the sublattice of $\text{I}_{1,\ell}$ spanned by $h,e_1,\dots,e_k$.

\begin{lemma}\label{lemma:double}
Suppose $v\in\emph{\text{I}}_{1,k}$ with $v^2 = 1$, whose orthogonal complement is $\Lambda\subset \emph{\text{I}}_{1,k}$; then:
\begin{itemize}
\item[(i)] the orthogonal of $2v-e_{k+1}$ in $\emph{\text{I}}_{1,k+1}$ is isomorphic to $\Lambda \oplus \langle-3\rangle$;
\item[(ii)] the orthogonal of $2v-e_{k+1}-e_{k+2}$ in $\emph{\text{I}}_{1,k+2}$ is isomorphic to $\Lambda \oplus \langle-2\rangle\oplus \langle-1\rangle$.
\end{itemize}
\end{lemma}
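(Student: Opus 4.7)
The plan is to reduce each part to a direct computation inside the rank-$2$ (resp.\ rank-$3$) sublattice of $\text{I}_{1,k+1}$ (resp.\ $\text{I}_{1,k+2}$) that is orthogonal to $\Lambda$, after splitting off $\Lambda$ as an orthogonal summand.

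First I would make the preliminary observation that since $v$ is primitive of square~$1$ in the unimodular lattice $\text{I}_{1,k}$, Lemma~\ref{lemma:littlelattice} gives $\det\Lambda = 1$; hence $\Lambda\oplus\langle v\rangle$ is a full-rank unimodular sublattice of $\text{I}_{1,k}$, so $\text{I}_{1,k}=\Lambda\oplus\langle v\rangle$. Consequently
\[
\text{I}_{1,k+1}=\Lambda\oplus\langle v\rangle\oplus\langle e_{k+1}\rangle,\qquad \text{I}_{1,k+2}=\Lambda\oplus\langle v\rangle\oplus\langle e_{k+1}\rangle\oplus\langle e_{k+2}\rangle,
\]
and the orthogonal complement of $\Lambda$ in either ambient lattice is the obvious $\text{I}_{1,1}$ or $\text{I}_{1,2}$ summand spanned by $v$ and the extra $e_j$'s.

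For part~(i), the class $w=2v-e_{k+1}$ has $w^2=3$ and is primitive (since $w\cdot e_{k+1}=1$), so Lemma~\ref{lemma:littlelattice} yields $\det(w^\perp)=3$. Clearly $\Lambda\subset w^\perp$, and one checks by hand that $u:=v-2e_{k+1}$ lies in the $\text{I}_{1,1}$ summand spanned by $v,e_{k+1}$, is orthogonal to $w$, and satisfies $u^2=-3$. Thus $\Lambda\oplus\langle u\rangle\subset w^\perp$ is a rank-$(k+1)$ sublattice of determinant~$3$, and the determinants and ranks force $w^\perp=\Lambda\oplus\langle -3\rangle$.

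For part~(ii), the same strategy applies with $w=2v-e_{k+1}-e_{k+2}$: one has $w^2=2$, $w\cdot e_{k+1}=1$ so $w$ is primitive, and $\det(w^\perp)=2$ by Lemma~\ref{lemma:littlelattice}. Inside the $\text{I}_{1,2}$ summand $\langle v,e_{k+1},e_{k+2}\rangle$ I would exhibit two mutually orthogonal classes orthogonal to $w$, namely $u_1:=e_{k+1}-e_{k+2}$ with $u_1^2=-2$ and $u_2:=v-e_{k+1}-e_{k+2}$ with $u_2^2=-1$. Then $\Lambda\oplus\langle u_1\rangle\oplus\langle u_2\rangle\subset w^\perp$ has the right rank $k+2$ and determinant $2$, so equality holds and $w^\perp\cong\Lambda\oplus\langle -2\rangle\oplus\langle -1\rangle$.

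There is no real obstacle here; the only care required is in verifying that $\Lambda$ splits off cleanly as a summand of the ambient lattice (the first paragraph), after which everything reduces to a determinant count and the explicit production of the rank-$1$ (resp.\ rank-$2$) orthogonal complement of $w$ inside the small $\text{I}_{1,1}$ or $\text{I}_{1,2}$ factor.
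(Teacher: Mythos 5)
Your proposal is correct and takes essentially the same route as the paper: exhibit the explicit orthogonal vectors ($v-2e_{k+1}$ for (i), and $e_{k+1}-e_{k+2}$, $v-e_{k+1}-e_{k+2}$ for (ii)) together with $\Lambda$ inside the complement, then conclude by matching rank and determinant using Lemma~\ref{lemma:littlelattice}. The only cosmetic differences are that the paper writes out only part (ii) and declares (i) analogous, and that your preliminary splitting $\text{I}_{1,k}=\Lambda\oplus\langle v\rangle$ is harmless extra scaffolding not needed for the argument.
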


\begin{proof}
We only prove (ii); (i) is analogous.
Pick a basis $w_1,\dots,w_k$ for $\Lambda$;
we claim that $w_1,\dots,w_k$, $e_{k+1}-e_{k+2}$, $v-e_{k+1}-e_{k+2}$ is a basis for the orthogonal of $2v-e_{k+1}-e_{k+2}$ exhibiting the orthogonal decomposition.
Indeed, it is easy to see that $w_j$, $e_{k+1}-e_{k+2}$, and $v-e_{k+1}-e_{k+2}$ are pairwise orthogonal for every $j$, that $(e_{k+1}-e_{k+2})^2 = -2$, and that $(v-e_{k+1}-e_{k+2})^2 = -1$, and hence they span $\Lambda \oplus \langle-2\rangle\oplus \langle-1\rangle$.
Since both the determinant of this lattice and the square of $2v-e_{k+1}-e_{k+2}$ are equal to 2, in fact the orthogonal to $2v-e_{k+1}-e_{k+2}$ is isomorphic to $\Lambda \oplus \langle-2\rangle\oplus \langle-1\rangle$, as claimed.
\end{proof}

In light of Lemmas~\ref{lemma:Tnorthogonal} and~\ref{lemma:double} above, in order to realize $E_8\oplus\langle 2\rangle\oplus\langle 1\rangle$ and $E_8\oplus\langle 3\rangle$ as fillings of $S^3_2(K)$ and $S^3_3(K)$, it suffices to find a homotopy $\smash{\cp \# 8\cpbar}$ and realize the homology class $6h-2e_1-\dots-2e_8 \in \text{I}_{1,8}$ in its lattice as the class of a PL sphere with a singularity of type $K$.

\begin{prop}\label{prop:E823}
Let $K$ be either $2T_{2,3}$, $T_{2,5}$, $T_{2,7}$, or $T_{3,4}$.
Then there is a PL sphere $S$ in a homotopy $\smash{\cp\#8\cpbar}$ in the homology class $6h-2e_1-\dots-2e_8$ with a unique singularity of type $K$.
\end{prop}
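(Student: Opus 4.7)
My plan is to realize $S$, in each of the four cases, as the proper transform, possibly followed by a topological band-sum of disjoint components, of a suitable algebraic curve $D \subset \cp$ (or in a blow-up of $\cp$) whose class after resolving a prescribed set of $8$ nodes becomes $2v = 6h - 2\sum_{i=1}^{8} e_i$ and whose only remaining singularity is a cusp of type $K$. Since a degree-$6$ plane curve has $p_a = 10$, rationality of a plane sextic $D$ is equivalent to its total $\delta$-invariant being $10$; $8$ nodes contribute $\delta = 8$, and this matches the total $\delta$ exactly when the remaining singularity has $\delta = 2$, i.e., in the two cases $K = T_{2,5}$ ($\delta = 2$) and $K = 2T_{2,3}$ ($\delta = 1+1 = 2$).

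\smallskip

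\emph{Cases $K = T_{2,5}$ and $K = 2T_{2,3}$.} Take $D$ to be an irreducible rational plane sextic carrying, respectively, a single $A_4$-cusp and $8$ nodes, or two $A_2$-cusps and $8$ nodes. Such sextics exist by a standard dimension count on the Severi variety in $|\mathcal{O}_{\cp}(6)|$, which has dimension $27$: the imposed singularities are of codimension $4+8 = 12$ in each case. Blowing up the $8$ nodes entirely resolves them and yields a connected PL sphere of class $2v$ whose only surviving singularities are the prescribed cusps. In the case $K = 2T_{2,3}$ the two $T_{2,3}$-cusps on the resulting sphere are merged into a single $T_{2,3}\# T_{2,3} = 2T_{2,3}$-cusp by the standard topological pinching operation along an embedded arc on the sphere connecting them; this operation preserves both the ambient manifold and the homology class.

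\smallskip

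\emph{Case $K = T_{3,4}$.} Since $\delta(T_{3,4}) = 3$, no irreducible sextic can carry both a $T_{3,4}$-cusp and $8$ nodes ($3+8 > 10$), so we take $D$ reducible. Let $D = Q \cup C$, where $Q$ is the classical rational cuspidal quartic $y^3 z = x^4$ (whose only singularity, at $[0:0:1]$, is of type $E_6 = T_{3,4}$), and $C$ is a smooth conic chosen generically so that $Q$ and $C$ meet transversally in $8$ distinct smooth points away from the cusp. Blowing up these $8$ intersection points yields disjoint PL spheres $\tilde Q$ (class $4h - \sum_{i=1}^{8} e_i$, retaining the $T_{3,4}$-cusp) and $\tilde C$ (class $2h - \sum_{i=1}^{8} e_i$, smooth) whose classes sum to $2v$. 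An orientation-preserving band-sum along an embedded arc in their common complement joins them into a single connected PL sphere of class $2v$ whose only singularity is the $T_{3,4}$-cusp inherited from $\tilde Q$.

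\smallskip

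\emph{Case $K = T_{2,7}$.} The Puiseux exponents $(2, 7)$ of an $A_6 = T_{2,7}$ cusp force the minimum degree of a plane curve carrying it to be $7$, so the direct sextic construction fails. Instead let $Q' \subset \cp$ be an irreducible rational septic with one $A_6$-cusp and $12$ nodes (which exists by a Severi-variety dimension count on the $35$-dimensional $|\mathcal{O}(7)|$). Blowing up the $12$ nodes yields a PL sphere of class $7h - 2\sum_{i=1}^{12} e_i$ in $\cp \# 12 \cpbar$ carrying only the $A_6$-cusp. A sequence of Cremona transformations and rational blow-downs of suitable configurations of exceptional $(-1)$-curves then reduces the ambient back to a homotopy $\cp \# 8 \cpbar$, transforms the class into $2v$, and preserves the $T_{2,7}$-singularity.

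\smallskip

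\textbf{Main obstacle.} The $T_{2,7}$ case is the subtle one, because the Puiseux obstruction on $A_6$-cusps rules out the clean sextic construction and forces one to pass through a higher-degree auxiliary curve followed by a non-trivial birational reduction. For the remaining three cases the key inputs are the existence of the appropriate rational cuspidal sextics (standard Severi-variety dimension counts) together with the explicit cuspidal quartic $y^3 z = x^4$, and the constructions reduce to blow-ups and, in the $T_{3,4}$ case, a purely topological band-sum.
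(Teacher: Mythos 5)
Your $T_{2,7}$ case, which you correctly flag as the hard one, does not work as written. First, existence of an irreducible septic with one $A_6$-cusp and $12$ nodes is asserted only via a ``Severi-variety dimension count''; a dimension count bounds the codimension of a non-empty equisingular stratum but does not prove non-emptiness, and the same objection applies to your sextics with an $A_4$-cusp (resp.\ two $A_2$-cusps) and $8$ nodes. Second, and more seriously, the proposed reduction from $\cp\#12\cpbar$ back to a homotopy $\cp\#8\cpbar$ is not a construction: Cremona transformations act as isometries of $\mathrm{I}_{1,N}$ and so preserve the square of the class, but $\bigl(7h-2\textstyle\sum_{1}^{12}e_i\bigr)^2=1$ while $\bigl(6h-2\sum_1^8 e_i\bigr)^2=4$, so no sequence of Cremona moves relates them; and a rational blow-down replaces a plumbing by a rational homology ball, so it generally destroys the property of being a homotopy $\cp\#8\cpbar$ (and can introduce torsion). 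The self-intersection mismatch means any honest fix must pass $(-1)$-spheres through $S$, which you have not specified and which would create new singular points. The paper avoids algebraic curves entirely here: a positive-to-negative crossing change takes $T_{2,7}$ to $T_{2,9}$, an explicit genus-$0$ band cobordism (Lemma~\ref{lemma:T354}) takes $T_{2,9}$ to $T_{4,4}$, and $T_{4,4}$ is capped by four disks in the Euler-number-$1$ disk bundle over $S^2$; stacking these produces an immersed degree-$4$ PL sphere in $\cp$ whose only singularity is a $T_{2,7}$-cone, after which one adds two lines, smooths two double points, and blows up the remaining double points exactly as in the other cases. Your $T_{2,5}$ and $2T_{2,3}$ cases would likewise be on firmer ground if, instead of invoking unproven irreducible sextics, you used the paper's reducible configuration: Namba's rational quartic with one $T_{2,5}$- and one $T_{2,3}$-cusp, plus two generic lines, with two of the nine double points smoothed topologically and the remaining seven double points and the trefoil cusp blown up.

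Your $T_{3,4}$ argument, by contrast, is correct and is genuinely simpler than the paper's: taking the cuspidal quartic $y^3z=x^4$ union a generic conic, blowing up all eight transverse intersection points, and tubing the two disjoint proper transforms (classes $4h-\sum e_i$ and $2h-\sum e_i$) along an embedded arc produces an embedded PL sphere in the class $6h-2\sum_1^8e_i$ with a single $T_{3,4}$-cone point; this bypasses the paper's detour through the crossing change to $T_{3,5}$ and the braid cobordism to $T_{4,4}$. The merging of two cone points into a single cone over the connected sum in your $2T_{2,3}$ case is also fine, though unnecessary given the paper's convention that a connected-sum singularity may be split into singularities over the summands.
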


\begin{proof}
We start with $K = T_{2,5}$. There is a singular degree-4 complex curve $C$ in $\cp$ that is rational and has one singularity of type $T_{2,5}$ and one of type $T_{2,3}$, see e.g.~\cite[Theorem~2.2.5(2)]{namba}.
We add two generic lines $\ell, \ell'$ to $C$ and smooth two of the resulting double intersections, one on each line.
So far we have constructed an \emph{immersed} PL sphere in $\cp$ with the two singularities of $C$ and seven additional double points:
indeed, there are eight intersections points between $C$ and $\ell \cup \ell'$, and one intersection between $\ell$ and $\ell'$, but two of these intersections have been smoothed.
Blowing up at the remainaing double points and at the trefoil cusp yields a PL sphere whose only singularity is of type $T_{2,5}$, and whose homology class is the desired one.

Blowing up once at the $T_{2,5}$-singularity instead of at the $T_{2,3}$-singularity in the last step, we obtain a PL sphere with two singular points of type $T_{2,3}$. Thus the same result holds for $2T_{2,3}$.

When $K = T_{3,4}$, we observe that there is an immersed concordance from $K$ to $T_{3,5}$, given by a positive crossing change;
in Lemma~\ref{lemma:T354} we then exhibit a 3-band cobordism $\Sigma$ from $T_{3,5}$ to $T_{4,4}$ in $S^3\times I$;
if we view $S^3$ as the boundary of the disc bundle $E$ over $S^2$ with Euler number $+1$ (i.e. a neighbourhood of a line in $\cp$), we can cap off $T_{4,4}$ in $E$ with four disjoint disks, each intersecting the 0-section of $E$ transversely and positively once.
We can build an immersed PL sphere in $\cp$ by gluing the cone over $K$, the immersed concordance to $T_{3,5}$, $\Sigma$, and the four discs in $E$; this sphere lives in the homology class $4h$, since it meets a line (the 0-section in $E$) algebraically four times.
The same argument as above now concludes the proof.

The same argument works for $K = T_{2,7}$: there is a (positive-to-negative) crossing change to $T_{2,9}$, and we apply Lemma~\ref{lemma:T354} as above.
\end{proof}

\begin{lemma}\label{lemma:T354}
There are genus-$0$ cobordisms from $T_{3,5}$ and $T_{2,9}$ to $T_{4,4}$.
\end{lemma}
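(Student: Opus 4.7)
My plan is as follows. Both cobordisms are planar surfaces with $1+4=5$ boundary components, so each has Euler characteristic $\chi = 2-5 = -3$. Since attaching a band (a $1$-handle) to the identity cobordism $K\times [0,1]$ lowers $\chi$ by $1$ and changes the component count by $\pm 1$, each cobordism must be built from exactly three bands, and since the number of components rises from $1$ to $4$, each of the three bands must split a component into two.

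I would realize each cobordism by exhibiting the three bands explicitly on a standard braid diagram. View $T_{3,5}$ as the closure of the positive $3$-braid $(\sigma_1\sigma_2)^5$, $T_{2,9}$ as the closure of $\sigma_1^9 \in B_2$, and $T_{4,4}$ as the closure of $(\sigma_1\sigma_2\sigma_3)^4 \in B_4$. Markov stabilizations, which are ambient isotopies and do not count as bands, first promote the starting braid to a $4$-braid with the same link type. I would then place three bands so that the associated saddle moves, combined with braid relations, rewrite the word as $(\sigma_1\sigma_2\sigma_3)^4$.

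A more conceptual alternative exploits the realization of $T_{3,5}$, $T_{2,9}$, and $T_{4,4}$ as the links at the origin of the plane curve singularities $y^3-x^5=0$, $y^2-x^9=0$, and $xy(x-y)(x+y)=0$ (four concurrent lines), respectively. A generic one-parameter deformation of either of the first two polynomials toward four concurrent lines defines an embedded cobordism in the ambient $5$-ball whose critical points are Morse-type saddles, each contributing a band to the boundary cobordism. Choosing the deformation so that the final fiber is exactly four concurrent lines would give precisely three saddles, matching the required Euler characteristic.

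The main obstacle will be verifying that the terminal link is $T_{4,4}$ and not some other $4$-component link of Euler characteristic $-3$. This would be tackled combinatorially, by tracking the $4$-braid word through the three saddle moves and reducing it to $(\sigma_1\sigma_2\sigma_3)^4$ via the braid relations, or geometrically, by arranging the algebraic deformation to have four concurrent lines at the origin as its limiting fiber. In any case, the crux is not the count of bands, which is forced by Euler characteristic, but the identification of the final link.
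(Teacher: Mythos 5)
Your framing of the problem is right, and your primary strategy is the same as the paper's: present everything as $4$-braids, with $T_{4,4}$ the closure of the full twist $(\sigma_1\sigma_2\sigma_3)^4$, and realize the cobordism by three band moves read off from the braid words. But the proposal stops exactly where the proof begins. The Euler characteristic bookkeeping only tells you that \emph{if} a connected genus-$0$ band cobordism exists it must consist of three splitting bands; it gives no reason such bands exist, and \emph{a priori} it is not clear that $T_{3,5}$ or $T_{2,9}$ cobounds a planar surface with $T_{4,4}$ at all. The entire content of the lemma is the explicit choice of bands, which you defer to ``place three bands so that the associated saddle moves, combined with braid relations, rewrite the word.'' The paper does this concretely: writing $\Delta=(xyz)^4$ for the full twist in $B_4$, it conjugates $(xy)^5z$ (a stabilization of the $3$-braid for $T_{3,5}$) into the form $xyxy\bar z\bar y\bar x\bar y\bar z\,\Delta$ and then deletes three letters (one $x$ and two $\bar z$), each deletion being an oriented band, after which the braid relations collapse the word to $\Delta$; for $T_{2,9}$ it uses the $4$-braid $(xyz)^2(xz)^2x$ coming from the cable description and inserts three letters to reach $\Delta$. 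Without an analogous explicit computation your argument is a plan, not a proof.

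The ``more conceptual alternative'' via deformations of plane curve singularities cannot be repaired, so it is not a safe fallback. Deforming $y^3-x^5$ (Milnor number $\mu=8$) or $y^2-x^9$ ($\mu=8$) so that a nearby fiber acquires an ordinary quadruple point ($\mu=9$) violates semicontinuity of the Milnor number. Running the degeneration the other way, a deformation of the quadruple point whose nearby fiber has a single $E_8$ (resp. $A_8$) point would produce, in the annular region between a small ball about the singular point and the Milnor ball, a curve piece with Euler characteristic $(1-9)-(1-8)-1=-1$ and $1+4=5$ boundary circles, i.e. genus $-1$, which is absurd. This is consistent with the paper's own remark that the cobordisms involving $T_{3,5}$, $T_{2,9}$ (and $T_{4,5}$) ``do not come from algebraic geometry'': the genus-$0$ cobordism here is a genuinely smooth/braid-theoretic construction, not the restriction of an algebraic deformation.
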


\begin{proof}
We are going to exhibit a ribbon cobordism from $T_{3,5}$ to $T_{4,4}$ obtained by attaching three (orientation-coherent) bands to $T_{3,5}$.
An Euler characteristic computation immediately shows that such a cobordism has genus 0.

We will write $x,y,z$ for the three standard generators of the 4-braid group $B_4$, which satisfy the relations $xyx = yxy, yzy = zyz, xz=zx$; we denote with $\xin$ ($\yin, \zin$) the inverse of $x$ ($y$, $z$, respectively).
We also call $\Delta$ the full twist on four strands, i.e. $\Delta = (xyz)^4$.
The knot $T_{3,5}$ is the closure of the 4-braid $(xy)^5z$ (which is a positive Markov stabilisation of the 3-braid $(xy)^5$), 
and $T_{4,4}$ is the closure of the braid $\Delta$.

Observe that we can write
\[
\Delta^{-1} = (\zin\yin\xin)^2(\zin\yin\xin\zin\yin\xin) = (\zin\yin\xin)^2(\zin\yin\zin\xin\yin\xin) = (\zin\yin\xin)^2(\yin\zin\yin\xin\yin\xin).
\]
We then have
\begin{align*}
(xy)^5z &= (xy)^5z\Delta^{-1}\Delta = (xy)^5z\zin\yin\xin\zin\yin\xin\yin\zin\yin\xin\yin\xin\Delta \sim xyxy\zin\yin\xin\yin\zin\Delta,
\end{align*}
where $\sim$ denotes conjugation in the braid group, and we have used that $\Delta$ is in the center of $B_4$.
We now attach three bands to this braid by cancelling three factors in the above expression; this corresponds to adding the inverses of the corresponding generator, which is indeed an oriented band:
\[
(xy)^5z \sim xyxy\zin\yin\xin\yin\zin\Delta \leadsto \cancel{x}yxy\cancel{\zin}\yin\xin\yin\cancel{\zin}\Delta = \Delta.
\]

For the cobordism from $T_{2,9}$, we view $T_{2,9}$ as the $(2,9)$-cable of the unknot, viewed as the $(2,1)$-torus knot.
Then attaching three bands as in Figure~\ref{f:T294}, we get the desired cobordism; see~\cite{Baader}.
In braid terms, this corresponds to viewing $T_{2,9}$ as the closure of the 4-braid $(xyz)^2(xz)^2x$ (with $x$ interchanging the two top strands in the figure, $y$ the two central ones, and $z$ the two bottom ones), and adding three generators (two $y$ and one $\xin$) as follows:
\[
(xyz)^2(xz)^2x \leadsto (xyz)^2(xyz)(xyz)x\xin = \Delta.\qedhere
\]
\end{proof}

\begin{figure}
\includegraphics[scale=1]{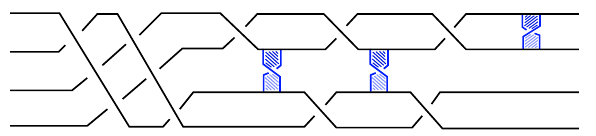}
\caption{The band attachments of Lemma~\ref{lemma:T354}.}\label{f:T294}
\end{figure}

\begin{corollary}
$E_8\oplus\langle 2\rangle\oplus \langle 1 \rangle$ fills $S_2^3(K)$ and $E_8\oplus \langle 3\rangle$ fills $S^3_3(K)$ for $K = 2T_{2,3}$, $T_{2,5}$, $T_{2,7}$, $T_{3,4}$.
\end{corollary}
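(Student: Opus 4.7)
The plan is to combine Proposition~\ref{prop:E823} with Lemma~\ref{lemma:double} via a blow-up at smooth points. Proposition~\ref{prop:E823} produces, for each $K$ in the list, a PL sphere $S$ in a homotopy $\cp\#8\cpbar$ of class $2v$, where
\[
v \;=\; 3h - e_1-\dots -e_8 \;\in\; \text{I}_{1,8},
\]
with a unique singularity of type $K$. Note that $v^2 = 1$, so $(2v)^2 = 4$; moreover, by Lemma~\ref{lemma:Tnorthogonal} (case $n=1$), the orthogonal complement $v^\perp\subset\text{I}_{1,8}$ is isomorphic to $-E_8$, since $v=[S_1]$.

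The first step is to blow up the ambient 4-manifold at one or two smooth points of $S$. This produces homotopies $\smash{\cp\#9\cpbar}$ and $\smash{\cp\#10\cpbar}$ containing PL spheres $S'$ and $S''$ with the same singularity of type $K$, in the classes
\[
[S']\;=\; 2v - e_9, \qquad [S''] \;=\; 2v - e_9 - e_{10},
\]
of self-intersection $3$ and $2$ respectively. These classes are primitive, as the coefficient of $e_9$ is $-1$, so by the general discussion preceding Proposition~\ref{prop:PLspheres}, the complements $W_{S'}$ and $W_{S''}$ have torsion-free homology, are positive definite, and bound $S^3_3(K)$ and $S^3_2(K)$, respectively; their intersection forms are obtained by negating the orthogonal complements $[S']^\perp$ and $[S'']^\perp$.

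The second step is to compute these orthogonals using Lemma~\ref{lemma:double} with $\Lambda = v^\perp = -E_8$. Part (i) of the lemma gives
\[
[S']^\perp \;=\; (2v-e_9)^\perp \;\cong\; -E_8 \oplus \langle -3 \rangle,
\]
and part (ii) gives
\[
[S'']^\perp \;=\; (2v-e_9-e_{10})^\perp \;\cong\; -E_8 \oplus \langle -2 \rangle \oplus \langle -1 \rangle.
\]
Consequently, $W_{S'}$ has intersection form $E_8\oplus\langle 3 \rangle$ and fills $S^3_3(K)$, while $W_{S''}$ has intersection form $E_8\oplus\langle 2 \rangle \oplus \langle 1\rangle$ and fills $S^3_2(K)$, as required.

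There is no real obstacle: the construction is a direct application of two already-established results. The only verification needed is that Proposition~\ref{prop:E823} indeed gives a sphere in the class $2v$ (which matches the stated class $6h-2e_1-\dots-2e_8$), and that blowing up at a smooth point of the PL sphere only modifies its class by subtracting the new exceptional class while leaving its singularities unchanged — both of which are standard.
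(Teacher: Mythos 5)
Your proposal is correct and follows exactly the route the paper intends: the sentence preceding Proposition~\ref{prop:E823} reduces the corollary to realizing the class $6h-2e_1-\dots-2e_8=2v$ by a PL sphere with a singularity of type $K$, and then Lemma~\ref{lemma:double} (with $\Lambda=v^\perp\cong -E_8$ from Lemma~\ref{lemma:Tnorthogonal}) applied after blowing up at one or two smooth points gives the stated orthogonal complements. You have simply spelled out the blow-up step and primitivity check that the paper leaves implicit.
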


We have now completed the proof of Theorem~\ref{thm:2,5} (ii), having realized all lattices listed therein for $S_n^3(T_{2,5})$. Nevertheless, we continue our discussion and show how to realize the lattices $\mathscr{C}_n$ in the same fashion as the others.

\begin{lemma}\label{lemma:Gamma12}
For $1\leqslant n \leqslant 11$, the orthogonal complement of the vector $v=4h-2e_1-e_2-\dots-e_{13-n}$ in ${\emph{\text{I}}}_{1,13-n}$ is isomorphic to $-\mathscr{C}_n$.
\end{lemma}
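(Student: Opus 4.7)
The plan is first to compute the invariants of $v^\perp$, and then to exhibit an explicit basis matching the plumbing graph of $-\mathscr{C}_n$. A direct expansion gives $v^2 = 16 - 4 - (12-n) = n$, and since $v \cdot e_2 = 1$ the gcd $d$ appearing in Lemma~\ref{lemma:littlelattice} is $1$, so $\det(v^\perp) = n$. As $v^2 > 0$ and the ambient form has signature $(1, 13-n)$, the orthogonal complement $v^\perp$ is negative definite of rank $13-n$.

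For $1 \leqslant n \leqslant 10$, I propose the candidate basis
\[
f := h - 2e_1, \qquad g := h - e_1 - e_2 - e_3, \qquad s_i := e_i - e_{i+1} \text{ for } 2 \leqslant i \leqslant 12-n,
\]
while for $n = 11$ I take instead $f := h - 2e_1$ and $f' := h - e_1 - 2e_2$. One checks directly that each is orthogonal to $v$ (e.g.\ $v\cdot g = 4-2-1-1=0$, $v\cdot s_i = 1-1 = 0$, $v\cdot f' = 4-2-2 = 0$) and that the vectors are linearly independent; this produces $13-n$ generators in each case. The inner products are routine: $f^2 = -3$, $g^2 = s_i^2 = -2$, $f'^2 = -4$, $f\cdot g = f\cdot f' = -1$, $g\cdot s_3 = 1$ and $g\cdot s_i = 0$ for $i\neq 3$, and $s_i\cdot s_j = 1$ when $|i-j|=1$, $=0$ when $|i-j|\geqslant 2$.

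The graph of nonzero off-diagonal pairings is a tree (a forest for $n=10$), so by flipping signs of appropriate basis vectors one can normalize every such pairing to $-1$. The resulting plumbing graph is: for $1\leqslant n\leqslant 8$, a T-shape with main chain $f{-}g{-}s_3{-}\cdots{-}s_{12-n}$ of weights $3,2,\dots,2$ and side-branch $s_2$ attached to $s_3$; for $n=9$, the linear chain $f{-}g{-}s_3{-}s_2$ of weights $3,2,2,2$; for $n=10$, the disjoint union of the chain $f{-}g$ of weights $3,2$ with the isolated vertex $s_2$ of weight $2$; and for $n=11$, the chain $f{-}f'$ of weights $3,4$. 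Comparing with Figure~\ref{f:Cns} and using the identifications $\mathscr{C}_9 = \Lambda(3,2,2,2)$, $\mathscr{C}_{10} = A_1\oplus\Lambda(2,3)$, $\mathscr{C}_{11} = \Lambda(3,4)$, these are precisely the plumbing descriptions of $-\mathscr{C}_n$. Hence the constructed sublattice is isomorphic to $-\mathscr{C}_n$, in particular has determinant $n$; since $v^\perp$ has the same rank $13-n$ and determinant $n$, the sublattice fills $v^\perp$, yielding the desired isomorphism. The main delicate point is the combinatorial bookkeeping at the three regime changes $n = 9, 10, 11$, where the shape of the plumbing graph transitions from T-shape to line, then to forest, then to a short chain of different weights; the inner-product and sign-change verifications themselves are purely mechanical.
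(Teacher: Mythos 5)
Your proof is correct and follows essentially the same route as the paper, which exhibits the identical basis (up to sign: $2e_1-h$, $h-e_1-e_2-e_3$, and the $e_i-e_{i+1}$, with $h-e_1-2e_2$ for $n=11$) and matches it against the plumbing graphs of Figure~\ref{f:Cns}. Your additional determinant count via Lemma~\ref{lemma:littlelattice}, confirming that the exhibited vectors span all of $v^\perp$ rather than a proper full-rank sublattice, makes explicit a step the paper leaves to the reader.
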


\begin{proof}
When $n\leqslant 10$, we can exhibit a basis of $v^\perp$ as follows: $e_2-e_3,\dots,e_{12-n}-e_{13-n}$, $2e_1-h$, and $h-e_1-e_2-e_3$.
When $n=11$, we can choose the basis $2e_1-h$ and $h-e_1-2e_2$.
One immediately verifies that these bases realize the defining plumbing graphs for $-\mathscr{C}_n$, as in shown in Figure~\ref{f:Cns}.
\end{proof}

\begin{prop}\label{prop:C-lattices}
Let $K$ be either $2T_{2,3}$, $T_{2,5}$, or $T_{3,4}$.
Then there is a PL sphere $S$ in a homotopy $\smash{\cp\#\cpbar}$ in the homology class $4h-2e_1$ with a unique singularity of type $K$.
\end{prop}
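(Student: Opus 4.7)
The plan is to produce, for each $K$, a degree-$4$ immersed PL sphere $S$ in $\cp$ whose singular set consists of a single cone point of type $K$ together with one additional point of multiplicity $2$ (either a cusp, a node, or a transverse self-intersection), and then to blow up $\cp$ at this additional point. Since the multiplicity is $2$, the proper transform $\tilde S$ will represent the class $[S]-2e_1=4h-2e_1$ in $\text{I}_{1,1}$, will have self-intersection $12$, and the additional singularity will be resolved, leaving only the cone point of type $K$.

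For $K=T_{2,5}$, I would take $S$ to be the classical rational plane quartic carrying one $T_{2,5}$-cusp and one $T_{2,3}$-cusp (its existence follows from \cite[Theorem~2.2.5]{namba}, and the genus formula $0 = \binom{3}{2}-\delta(T_{2,5})-\delta(T_{2,3})=3-2-1$ is indeed satisfied), and then blow up $\cp$ at the $T_{2,3}$-cusp $p$. The standard local computation of the blow-up of $y^2=x^3$ at the origin shows that the proper transform of an $A_2$-cusp is a smooth curve tangent to the exceptional divisor; hence the proper transform is a PL sphere in $\cp\#\cpbar$ whose only singularity is the untouched $T_{2,5}$-cusp. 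For $K=2T_{2,3}$ I would apply the same procedure to Steiner's tricuspidal quartic, whose three singularities are all of type $A_2=T_{2,3}$, and blow up one of the three cusps; the proper transform then carries the two remaining $T_{2,3}$-cusps at distinct points, which can be combined, up to an isotopy of $S$ inside $\cp\#\cpbar$, into a single cone point of type $T_{2,3}\# T_{2,3}=2T_{2,3}$, following the convention recalled just before Proposition~\ref{prop:PLspheres}.

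For $K=T_{3,4}$ there is no complex-curve realization, because $\delta(T_{3,4})=3$ already saturates the genus bound for quartics. Instead, I would reuse the construction from the proof of Proposition~\ref{prop:E823}: gluing the cone over $T_{3,4}$, the immersed concordance from $T_{3,4}$ to $T_{3,5}$ given by a single positive crossing change (which introduces exactly one transverse double point), the embedded cobordism of Lemma~\ref{lemma:T354} from $T_{3,5}$ to $T_{4,4}$, and four disjoint discs in the unit disc bundle over a line produces an immersed PL sphere in $\cp$ of class $4h$ whose only singularities are the $T_{3,4}$ cone point and one transverse double point $q$. Blowing up $\cp$ at $q$ separates the two local sheets of $S$ through $q$ into two disjoint transverse intersections with the exceptional divisor, so the proper transform is a PL sphere in $\cp\#\cpbar$ representing $4h-2e_1$ whose only singularity is the $T_{3,4}$ cone point.

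The computation of homology classes is the standard $[\tilde S]=\pi^*[S]-m\,e_1$ at a point of multiplicity $m=2$, and the only potentially delicate point is the $T_{3,4}$ case, where one must verify that no further intersections are accidentally introduced by the concatenation of cobordisms; this is immediate from the fact that the ribbon cobordism of Lemma~\ref{lemma:T354} and the four discs in the disc bundle are embedded and disjoint, while the crossing change contributes exactly one transverse double point. Everything else is a book-keeping exercise using the material already in Section~\ref{sec:constructions}.
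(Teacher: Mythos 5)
Your proposal is correct and follows essentially the same route as the paper: blow up a degree-$4$ immersed PL sphere once at a multiplicity-$2$ point (a cusp for $T_{2,5}$ and $2T_{2,3}$, the transverse double point of the crossing-change concordance for $T_{3,4}$), which is precisely how the paper reduces this proposition to the constructions in the proof of Proposition~\ref{prop:E823} with the auxiliary lines omitted. The only cosmetic difference is in the $2T_{2,3}$ case, where you blow up a cusp of the tricuspidal quartic while the paper blows up once at the $T_{2,5}$-cusp of the bicuspidal quartic to reduce it to a $T_{2,3}$-cusp; both give the class $4h-2e_1$ with two $T_{2,3}$ singularities.
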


\begin{proof}
	This follows from the constructions provided in the proof of Proposition~\ref{prop:E823}. For example, for the case of $K=T_{2,5}$, we omit the addition of the two additional lines $\ell$ and $\ell'$, and only blow up at the trefoil cusp singularity.
\end{proof}

\begin{corollary}
For $1\leqslant n \leqslant 11$, $\mathscr{C}_n$ fills $S_n^3(K)$ for $K = 2T_{2,3}$, $T_{2,5}$, $T_{3,4}$.
\end{corollary}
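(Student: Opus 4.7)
The plan is to mimic the argument used for the corollary realizing the lattices $\mathscr{T}_n$ via Lemma~\ref{lemma:Tnorthogonal}, this time drawing on Proposition~\ref{prop:C-lattices} and Lemma~\ref{lemma:Gamma12}. Fix one of the knots $K\in\{2T_{2,3},T_{2,5},T_{3,4}\}$ and let $S\subset X$ be the PL sphere of class $4h-2e_1$ in the homotopy $\cp\#\cpbar$ provided by Proposition~\ref{prop:C-lattices}, so that $[S]^2=12$ and the only singular point of $S$ is of type $K$.

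For $1\leqslant n\leqslant 11$, I would blow up $X$ at $12-n$ smooth points of $S$ chosen generically, and let $S_n$ denote the PL sphere obtained as the proper transform. The resulting ambient manifold is a homotopy $\cp\#(13-n)\cpbar$, with lattice $\text{I}_{1,13-n}$; in the standard basis $h,e_1,\ldots,e_{13-n}$, the homology class of $S_n$ is
\[
[S_n]\;=\;4h-2e_1-e_2-\cdots-e_{13-n},
\]
which is primitive and has self-intersection $16-4-(12-n)=n$. Moreover, since we only blew up at smooth points, $S_n$ continues to have a unique singularity, still of type $K$.

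Apply the general framework preceding Proposition~\ref{prop:PLspheres}: the complement $W_{S_n}$ is a smooth 4-manifold with boundary $S^3_n(K)$, it is positive definite because $n>0$, and it has torsion-free homology because $[S_n]$ is primitive. Its intersection form is the orthogonal complement of $[S_n]$ in $\text{I}_{1,13-n}$, and Lemma~\ref{lemma:Gamma12} identifies this complement with $-\mathscr{C}_n$; reversing orientation yields a filling of $S^3_n(K)$ with intersection form $\mathscr{C}_n$, as required.

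There is essentially no obstacle here: all the nontrivial content has been packed into Proposition~\ref{prop:C-lattices} (which constructs the PL sphere of class $4h-2e_1$ with a singularity of the prescribed type) and Lemma~\ref{lemma:Gamma12} (which computes the orthogonal complement). The only routine check is that generic blow-ups at smooth points of $S$ do not affect the singularity and do produce a primitive class with the claimed self-intersection, both of which are immediate.
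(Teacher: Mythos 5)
Your argument is exactly the paper's: the corollary is obtained by blowing up the PL sphere of Proposition~\ref{prop:C-lattices} at $12-n$ generic smooth points to land in the primitive class $4h-2e_1-e_2-\cdots-e_{13-n}$, and then invoking the general complement construction together with Lemma~\ref{lemma:Gamma12}. The routine checks you flag (primitivity for $n\leqslant 11$, self-intersection $n$, persistence of the singularity type) are all correct, so the proposal is complete and takes essentially the same approach as the paper.
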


\subsection{Further examples}

The above constructions concern the lattices appearing in Theorems \ref{thm:trefoil} and \ref{thm:2,5}, but the method is clearly very general. Here we offer a few more examples involving other lattices, including some related to the discussion in~\cite[Section~5]{scaduto-forms}, by looking at surgeries along knots of slice genus at least 3.

If we go by rank, the first reduced unimodular positive definite lattice that is neither isomorphic to $E_8$ nor $\Gamma_{12}$ is the rank-14 lattice $E_7^2$, which is labelled by its root lattice $E_7\oplus E_7$. This lattice is isomorphic to the complement of the vector $6h-2e_1-\cdots -2e_7-e_8-\cdots-e_{14}\in\text{I}_{1,14}$, see e.g.~\cite[Section~4]{scaduto-niemeier} for this and later such claims. This motivates us to define $\mathscr{E}_{n}$ for $1\leqslant n \leqslant 7$ as the orthogonal complement of the vector $6h-2e_1-\cdots -2e_7-e_8-\cdots-e_{15-n}\in \text{I}_{1,15-n}$. It is readily verified that $\mathscr{E}_n$ is a lattice of rank $15-n$, determinant $n$, and $\mathscr{E}_1=E_7^2$.

\begin{prop}
For $1\leqslant n\leqslant 7$, $\mathscr{E}_n$ fills $S_n^3(K)$ where $K=T_{2,3}\# T_{2,5}, 3T_{2,3}, T_{2,7}$, or $T_{3,4}$.\label{prop:e72}
\end{prop}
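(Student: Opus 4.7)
The plan closely follows Proposition~\ref{prop:E823}. For each $K$ in the list, we aim to produce a PL sphere $S$ in a homotopy $\smash{\cp\#7\cpbar}$ representing the class $6h-2e_1-\cdots-2e_7$ whose singular set, read as a connected sum of knots, is $K$. Once $S$ is in hand, blowing up at $8-n$ generic smooth points of $S$ gives a PL sphere $S_n$ in a homotopy $\smash{\cp\#(15-n)\cpbar}$ of class $v_n=6h-2e_1-\cdots-2e_7-e_8-\cdots-e_{15-n}$ and self-intersection $n$, still carrying a $K$-singularity. Since $v_n$ is primitive in ${\rm I}_{1,15-n}$, the complement $W_{S_n}$ automatically has torsion-free homology, boundary $S^3_n(K)$, and intersection form exactly $\mathscr{E}_n$ by its very definition as the orthogonal complement of $v_n$.

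For $K\in\{T_{2,3}\# T_{2,5},\,3T_{2,3},\,T_{3,4}\}$, the starting sphere comes from an algebraic-geometric construction as in Proposition~\ref{prop:E823}. Take a rational plane quartic $C\subset\cp$ whose cusp data is $K$ -- the usual quartic with one $T_{2,3}$ and one $T_{2,5}$ cusp (used in Proposition~\ref{prop:E823}), the Steiner tricuspidal quartic, or the unicuspidal quartic $\{y^3z=x^4\}$, respectively -- together with two generic lines $\ell,\ell'$. Smoothing two of the nine resulting intersection points (so as to glue the three components into a single PL sphere) and blowing up the remaining seven then yields $S$. The only difference from the proof of Proposition~\ref{prop:E823} is that we do \emph{not} further blow up any cusp of $C$, so all cusps of $C$ survive on $S$ and their connected sum is exactly $K$.

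The hard case is $K=T_{2,7}$, since no rational plane quartic admits a unique $T_{2,7}$-cusp (its Puiseux data does not fit in degree four, so the construction above is unavailable). Here the plan is cobordism-theoretic, mimicking the $T_{2,7}$-case in Proposition~\ref{prop:E823} and Lemma~\ref{lemma:T354}: build an immersed PL sphere in $\cp$ of class $6h$ carrying a $T_{2,7}$-cusp and exactly seven double points, then blow up all seven. The immersed sphere is assembled in three pieces inside the decomposition $\cp=B^4\cup(S^3\times I)\cup E$, where $E$ is the $D^2$-bundle over $S^2$ of Euler number $+1$: (i) the cone on $T_{2,7}$ inside $B^4$; (ii) an immersed genus-zero cobordism in $S^3\times I$ from $T_{2,7}$ to the six-component unlink, with seven double points, built from three crossing changes realising the unknotting number of $T_{2,7}$, two further pairs of cancelling crossing changes contributing the remaining four double points, and five band splittings in the spirit of Lemma~\ref{lemma:T354}; and (iii) six meridian discs of the zero-section inside $E$, which account for the class $6h$. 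A direct Euler-characteristic count ($1-12+6=-5=2-7$) confirms that this assembly is topologically a PL sphere with exactly seven double points; verifying these details is the main technical point of this case.
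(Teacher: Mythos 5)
Your construction for $K=T_{2,3}\# T_{2,5}$, $3T_{2,3}$, and $T_{3,4}$ is essentially the paper's argument: the paper adds a generic \emph{conic} to the rational cuspidal quartic (giving $8$ double points, one of which is smoothed) rather than two lines (giving $9$, two of which are smoothed), but either way one blows up exactly $7$ double points to land in the class $6h-2e_1-\cdots-2e_7$, and the surrounding bookkeeping (primitivity, torsion-free homology of the complement, generic blow-ups to reach $6h-2e_1-\cdots-2e_7-e_8-\cdots-e_{15-n}$) is the same.

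The $T_{2,7}$ case, however, contains a genuine gap, and it stems from a false premise. You assert that no rational plane quartic has a unique singularity of type $T_{2,7}$. This is wrong: a $T_{2,7}$ cusp is an $A_6$ singularity $y^2=x^7$ with $\delta$-invariant $3$, which exactly kills the arithmetic genus $3$ of a quartic, and such unicuspidal rational quartics appear in the classification the paper cites (\cite[Theorem~2.2.5]{namba}; see also \cite[Section~3.1]{moe}). The paper therefore treats $T_{2,7}$ identically to the other three cases. (The reason Proposition~\ref{prop:E823} needed a cobordism trick for $T_{2,7}$ is that \emph{there} one must blow up an eighth point, which for the other quartics is supplied by an extra cusp; here only seven blow-ups are needed, so no trick is required.) Your replacement construction is also flawed as stated: six \emph{disjoint} discs in the Euler-number-$+1$ disc bundle $E$, each meeting the zero-section transversely and positively once, have boundaries that pairwise link $+1$ (they are Hopf fibres), so the link you must cap off in $E$ is $T_{6,6}$, not the six-component unlink --- exactly as in Lemma~\ref{lemma:T354}, where the target is $T_{4,4}$. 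You would then need a genus-$0$, suitably immersed cobordism from $T_{2,7}$ to $T_{6,6}$ with precisely seven positive double points, which you have not produced and which you yourself flag as unverified. As written, that case of the proposition is not proved.
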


\begin{proof}
Begin with a rational quartic in $\cp$ with cuspidal singularities of types $K_1,\ldots,K_m$. Add a generic conic, so that there are 8 double points. Smooth one double point, blow up at the remaining 7 double points, and blow up at $8-n$ generic points. The resulting PL sphere is in the homology class $6h-2e_1-\cdots -2e_7-e_8-\cdots-e_{15-n}$. This shows that $\mathscr{E}_n$ fills $S_n^3(K_1\#\cdots \# K_m)$. Finally, a rational cuspidal quartic either has one singularity of type $T_{3,4}$ or $T_{2,7}$; two cusps of types $T_{2,5}$ and $T_{2,3}$; or three singularities each of type $T_{2,3}$. See~\cite[Theorem~2.2.5]{namba} and the discussion thereafter, or~\cite[Section~3.1]{moe}.
\end{proof}

\begin{remark}
A similar argument works for $K=T_{3,5}$, $T_{2,9}$, and $T_{4,5}$, since in both cases there is a genus-0 cobordism to $T_{4,4}$, which is exactly what we are using to construct the required homology class.
\end{remark}

The next reduced positive definite unimodular lattice, by rank, is the rank 15 lattice $A_{15}$, again labelled by its root lattice. This lattice is isomorphic to the orthogonal complement of $4h-e_1-\cdots-e_{15}\in\text{I}_{1,15}$. We thus define $\mathscr{A}_{n}$ for $1\leqslant n\leqslant 15$ to be the orthogonal complement of $4h-e_1-\cdots -e_{16-n}\in\text{I}_{16-n}$. Then $\mathscr{A}_n$ is a lattice of rank $16-n$, determinant $n$, and $\mathscr{A}_1=A_{15}$. Taking any rational cuspidal quartic as in the proof of Lemma \ref{prop:e72}, and blowing up $16-n$ generic times, we are led to the following proposition.

\begin{prop}
For $1\leqslant n\leqslant 15$, $\mathscr{A}_n$ fills $S_n^3(K)$ where $K=T_{3,4}$, $T_{2,7}$, $T_{2,3}\# T_{2,5}$, $3T_{2,3}$, $T_{2,9}$, $T_{3,5}$, or $T_{4,5}$
\end{prop}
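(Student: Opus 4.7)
The plan is to follow the same blueprint used in Proposition~\ref{prop:e72}: for each knot $K$ in the list, build a degree-$4$ singular PL sphere $S$ in $\cp$ (i.e.\ in homology class $4h$) with a unique singularity of type $K$; then blow up at $16-n$ generic smooth points of $S$. The proper transform $S'$ lies in $X = \cp\#(16-n)\cpbar$, has the same singularity, and represents the homology class $4h-e_1-\cdots-e_{16-n}$. By the general framework of Section~4.2, the complement $W_{S'}$ is a positive definite filling of $S_n^3(K)$ with torsion-free homology, and its intersection lattice is $[S']^\perp$. By the very definition of $\mathscr{A}_n$, this orthogonal complement is $\mathscr{A}_n$, as required.

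The only real content, then, is exhibiting a degree-$4$ PL sphere in $\cp$ with the appropriate singularity. For $K = T_{3,4}$, $T_{2,7}$, $T_{2,3}\#T_{2,5}$, or $3T_{2,3}$ this is classical: there exist rational cuspidal quartics realizing each of these singularity types, as recalled in Proposition~\ref{prop:e72} (see \cite[Theorem~2.2.5]{namba} and~\cite[Section~3.1]{moe}). For $K = T_{3,5}$, $T_{2,9}$, $T_{4,5}$, I would use the ingredient already in hand from the proof of Proposition~\ref{prop:E823}: namely, $T_{4,4}$ bounds four disjoint disks in a tubular neighbourhood of a line in $\cp$, and Lemma~\ref{lemma:T354} (together with the positive crossing changes used there) provides genus-$0$ cobordisms from each of $T_{3,5}$, $T_{2,9}$, $T_{4,5}$ to $T_{4,4}$. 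Gluing the cone on $K$, the (immersed) concordance to $T_{3,5}$ or $T_{2,9}$ when needed, the genus-$0$ cobordism to $T_{4,4}$, and the four disks gives an immersed PL sphere in $\cp$ whose homology class meets a line algebraically four times, hence equals $4h$; blowing up at each of the transverse double points turns it into a PL sphere whose only singularity is of type $K$. The case $K = T_{4,5}$ is handled identically, using directly a genus-$0$ cobordism from $T_{4,5}$ to $T_{4,4}$.

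Once $S$ is constructed, the rest is routine: choose $16-n$ smooth points on $S$, blow up $X$ at each of them, and take the proper transform $S'$, which now sits in $\text{I}_{1,16-n}$ in class $4h - e_1-\cdots-e_{16-n}$. Its square is $n$, and the class is primitive, so by the discussion preceding Proposition~\ref{prop:PLspheres} the complement $W_{S'}$ is positive definite, has torsion-free homology, boundary $S_n^3(K)$, and intersection lattice $[S']^\perp$, which by definition is $\mathscr{A}_n$.

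The main obstacle, such as it is, is verifying that one really can realize each listed $K$ as the singularity of an appropriate degree-$4$ PL sphere; but this requires no new technique beyond what is already developed for Propositions~\ref{prop:E823} and~\ref{prop:e72}, and it is essentially the content of the remark following Proposition~\ref{prop:e72}. Thus the proof reduces to a short citation of the previous constructions.
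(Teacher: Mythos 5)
Your proof is correct, and for the four ``algebraic'' knots ($T_{3,4}$, $T_{2,7}$, $T_{2,3}\#T_{2,5}$, $3T_{2,3}$) it coincides with the paper's: take a rational cuspidal quartic and blow up at $16-n$ generic smooth points. (One small point of phrasing: for the composite knots the quartic has two or three cusps rather than ``a unique singularity of type $K$'', but the framework of Section~\ref{sec:constructions} explicitly allows several cone points whose types connect-sum to $K$, so nothing is lost.) For the three remaining knots $T_{3,5}$, $T_{2,9}$, $T_{4,5}$ you take a genuinely different route from the paper. The paper observes that $S^3_{16}(K)$ bounds a rational homology ball built with handles of index at most $2$ (via \cite{LecuonaBaker} for the lens space $S^3_{16}(T_{3,5})$, via \cite{AcetoGolla} for $T_{2,9}$, and via the Kirby-calculus picture of Figure~\ref{f:T349} for $T_{4,5}$), so that gluing it to the surgery trace produces a homotopy $\cp$ containing the desired PL sphere of square $16$. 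You instead reuse the braid-theoretic input of Lemma~\ref{lemma:T354}: cone on $K$, then an embedded genus-$0$ band cobordism to $T_{4,4}$, then four disjoint capping disks in a neighbourhood of a line, giving a PL sphere in class $4h$ in $\cp$ itself. This is exactly the alternative the paper gestures at in the remark following Proposition~\ref{prop:e72}; it avoids citing the rational-ball results, at the cost of having to supply the (easy) band cobordism from $T_{4,5}$ to $T_{4,4}$, e.g.\ deleting one $x$, one $y$ and one $z$ from $(xyz)^5$, which Lemma~\ref{lemma:T354} as stated does not cover.

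Two cautions. First, your sentence about ``blowing up at each of the transverse double points'' is a leftover from Proposition~\ref{prop:E823} and is potentially dangerous here: blowing up at a double point would change the class from $4h$ to $4h-2f$, whose further generic blow-downs do \emph{not} have orthogonal complement $\mathscr{A}_n$ (the square would drop by $4$). Fortunately, for $K=T_{3,5}$, $T_{2,9}$, $T_{4,5}$ no crossing change is needed before reaching $T_{4,4}$, so the cobordism is embedded, there are no double points, and the class really is $4h$; you should say this explicitly rather than invoke blow-ups at double points. Second, make sure the $16-n$ blow-up points are chosen on the smooth part of $S$, so that $[S']=4h-e_1-\cdots-e_{16-n}$ is primitive and the complement has determinant $n$ and torsion-free homology, as required by Lemma~\ref{lemma:littlelattice} and the discussion preceding Proposition~\ref{prop:PLspheres}.
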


Note that the last three cases above do not come from algebraic geometry; for instance, as in Figure~\ref{f:T349}, one can show that the trace of $16$-surgery along $T_{4,5}$ embeds in $\cp$, and the same argument outlined above applies in this case, too.

For the remaining two cases, we know from~\cite{AcetoGolla} that $16$-surgery along both knots bound rational homology ball, and in fact it is not hard to show that one can construct such rational balls using only handles of index at most 2 (since $S^3_{16}(T_{3,5})$ is a lens space, this follows from~\cite{LecuonaBaker}), so that gluing the rational homology ball with the trace of the surgery yields a homotopy $\cp$.

Finally, we consider the unique reduced positive definite lattice of rank 16 that is odd: $D_8^2$. This is isomorphic to the orthogonal complement of $8h-4e_1-3e_2-2e_3-\cdots -2e_{10}-e_{11}-\cdots -e_{16}\in \text{I}_{1,16}$. Thus we define $\mathscr{D}_n$ for $1\leqslant n \leqslant 7$ to be the orthogonal complement of $8h-4e_1-3e_2-2e_3-\cdots -2e_{10}-e_{11}-\cdots -e_{17-n}\in \text{I}_{1,17-n}$, a lattice of rank $17-n$, determinant $n$, and $\mathscr{D}_1=D_8^2$. We have:

\begin{prop}
For $1\leqslant n\leqslant 7$, $\mathscr{D}_n$ fills $S_n^3(K)$, where $K$ is among $T_{2,3}\# T_{3,4}$, $T_{3,5}$, $T_{2,9}$, $T_{2,3}\#T_{2,7}$, $2T_{2,5}$, $4 T_{2,3}$ and $2T_{2,3}\# T_{2,5}$.
\end{prop}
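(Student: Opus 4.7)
The plan is to mirror the approach of Proposition~\ref{prop:e72}, now working with degree-$8$ plane configurations in place of degree-$6$ ones. Since $\mathscr{D}_n$ is defined as the orthogonal complement in $\mathrm{I}_{1,17-n}$ of the primitive vector
\[
v_n = 8h - 4e_1 - 3e_2 - 2e_3 - \cdots - 2e_{10} - e_{11} - \cdots - e_{17-n},
\]
the framework set up after Proposition~\ref{prop:PLspheres} reduces the problem to exhibiting $v_n$ as the class of a PL sphere in a homotopy $\cp \# (16-n)\cpbar$ whose singularities together have link type $K$. Blowing up further at smooth points of the sphere contributes the terms $-e_i$ with $i \geqslant 11$, so only the case $n=7$ has to be handled.

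The guiding calculation is that the leading coefficient pattern $(4,3)$ in $v_7$ is precisely the multiplicity sequence of the $T_{4,7}$ plane curve singularity, since blowing up $y^4 = x^7$ at the origin produces a $T_{3,4}$ singularity, which resolves in one further blow-up; the subsequent eight coefficients of $2$ correspond to ordinary nodes. As $\delta(T_{4,7}) = 9$, eight ordinary nodes contribute $\delta = 8$, and every $K$ in the list satisfies $\delta(K) = 4$, a rational plane octic bearing a $T_{4,7}$ cusp, eight nodes, and additional cusps realizing $K$ would have total $\delta$-invariant $9 + 8 + 4 = 21$, matching the arithmetic genus $\binom{7}{2}$ of an octic. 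Its proper transform in $\cp \# 9\cpbar$, obtained by blowing up twice at the $T_{4,7}$ cusp and once at each node, would realize $v_7$ with the desired residual singularity of type $K$.

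For each algebraic $K \in \{T_{2,3}\# T_{3,4},\, T_{2,3}\# T_{2,7},\, 2T_{2,5},\, 4T_{2,3},\, 2T_{2,3}\# T_{2,5}\}$, I would exhibit such a rational octic either as an irreducible curve with a $T_{4,7}$ cusp and further cuspidal locus of type $K$, or as the union of two lower-degree rational cuspidal curves arranged to share a tangent cusp configuration producing a $T_{4,7}$-type singularity on the union while the surviving cusps collectively realize $K$; the classification of rational cuspidal plane curves of low degree (see e.g.~\cite[Theorem~2.2.5]{namba}) supplies the needed component curves. For $K \in \{T_{3,5}, T_{2,9}\}$, I would instead combine the genus-$0$ cobordisms to $T_{4,4}$ of Lemma~\ref{lemma:T354} with the disk-bundle-cap construction of Proposition~\ref{prop:E823} adapted to the degree-$8$ setting, or equivalently start from an algebraic configuration and apply the positive crossing changes $T_{3,4} \leadsto T_{3,5}$ or $T_{2,7} \leadsto T_{2,9}$ already used in the proof of that proposition.

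The hardest part will be the case-by-case verification of the existence of the required singular octics (or their PL analogues): one must construct an explicit plane curve, or an explicit PL sphere in the homotopy $\cp \# 9\cpbar$, with exactly the prescribed singularity structure (a $T_{4,7}$-cusp, eight nodes, and the cusps giving $K$), and verify that its proper transform has class $v_7$. This is a classical problem in algebraic geometry for the algebraic $K$ and requires careful bookkeeping in the band-cobordism constructions for $T_{3,5}$ and $T_{2,9}$. Once the PL sphere is in hand, the identification $v_7^\perp \cong -\mathscr{D}_7$ is a direct lattice computation following the pattern of Lemmas~\ref{lemma:Tnorthogonal} and~\ref{lemma:Gamma12}.
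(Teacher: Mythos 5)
Your reduction to the single class $8h-4e_1-3e_2-2e_3-\cdots-2e_{10}$ (with the remaining $n\leqslant 7$ cases obtained by blowing up generic points) and the $\delta$-invariant bookkeeping ($9+8+4=21=\binom{7}{2}$) are both correct, and packaging the coefficients $(4,3)$ into a single $T_{4,7}$ cusp and the eight $2$'s into ordinary nodes is a legitimate-looking alternative strategy. But the proof has a genuine gap exactly where you flag it: you never establish that a rational octic with a $T_{4,7}$ cusp, eight nodes, and a residual cuspidal locus of type $K$ actually exists for any of the seven knots, and the genus count $21$ is only a necessary condition, not a construction. Worse, your fallback route is impossible in principle: $T_{4,7}$ is the link of the \emph{unibranch} singularity $y^4=x^7$, so it cannot arise at a point where two distinct components of a reducible curve meet tangentially --- such a point has at least two local branches and its link is a multi-component link, not a knot. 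The treatment of $T_{3,5}$ and $T_{2,9}$ (``adapted to the degree-$8$ setting'') is likewise not carried out.

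The paper sidesteps these existence problems entirely by never using a $T_{4,7}$ cusp. It starts from explicitly known rational cuspidal curves of degrees $5$, $6$ and $7$ (quintics from \cite{namba}, sextics and septics from \cite[Theorem~1.1]{fenske}) whose singularities have multiplicity sequences such as $[4,2,2,2]$ and $[3,3]$; it then adds one or two lines, smooths a few of the resulting double points so that the union becomes an immersed PL sphere of degree $8$, and blows up at the remaining double/triple points and at selected infinitely near points of the cusps. In that construction the $-4e_1$ and $-3e_2$ come from blow-ups at \emph{two different} singular points (a multiplicity-$4$ point and a multiplicity-$3$ point, or a triple point of the line arrangement), and the eight $-2e_i$ come from a mixture of ordinary double points and infinitely near double points of the cusps; each of the seven knots is realized by choosing the appropriate curve from the classification and deciding where to blow up. To complete your approach you would need to actually produce the required octics (a hard, open-ended problem about rational cuspidal curves); the more realistic fix is to trade the $T_{4,7}$ cusp for configurations built from curves already known to exist, as the paper does.
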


\begin{proof}
According to~\cite[Theorem~1.1]{fenske} Case 4 with $a=d=2$, there exists a rational septic in $\cp$ with two cuspidal singularities: one has multiplicity sequence $[4,2,2,2]$, and the other $[3,3]$. Add a line to this curve, introducing seven double points, and smooth one of them. Blow up at the first singularity three times, the second singularity once, the remaining $6$ double points, and $7-n$ generic points. The multiplicity sequence $[4,2,2,2]$ has been reduced to $[2]$, which is of type $T_{2,3}$, and that of $[3,3]$ to $[3]$, of type $T_{3,4}$. The resulting PL sphere is in the homology class $8h-4e_1-3e_2-2e_3-\cdots -2e_{10}-e_{11}-\cdots -e_{17-n}\in \text{I}_{1,17-n}$ and has singularities of types $T_{2,3}$ and $T_{3,4}$, from which the result follows for $K=T_{2,3}\# T_{3,4}$.

From~\cite[Theorem~1.1]{fenske} Case 5 with $a=d=2$, there also exists a rational septic in $\cp$ with two cuspidal singularities, one with multiplicity sequence $[4,2,2]$, and the other with $[3,3,2]$. Repeating the above construction on this curve yields a PL sphere in the same homology class having one singularity with multiplicity sequence $[3,2]$, which has type $T_{3,5}$. This proves the result for $K=T_{3,5}$.

Next, by~\cite[Theorem~1.1]{fenske} Case 1 with $a=d=2$ and $b=1$, there is a rational sextic $C$ in $\cp$ with two cuspidal singularities, having multiplicity sequences $[4,2,2,2]$ and $[2]$. Add two lines $\ell,\ell'$ to $C$ that intersect at a point of $C$, forming a triple point, and otherwise intersect $C$ in a total of $10$ double points. Smooth a double point intersecting $\ell$, and another intersecting $\ell'$. Then blow up the remaining 8 double points and the triple point. Finally, blow up once at the first singularity. The multiplicity sequences of the singularities are now $[2,2,2]$ and $[2]$, which are of type $T_{2,7}$ and $T_{2,3}$, respectively. Our PL sphere is in the required homology class, and the result follows for the case of $T_{2,3}\#T_{2,7}$ after blowing up at $7-n$ generic points.

Similarly, by~\cite[Theorem~1.1]{fenske} Case 1 with $a=d=2$ and $b=0$, there is a sextic with two singularities, with multiplicity sequences $[4,2,2]$ and $[2,2]$; and by~\cite[Theorem~1.1]{fenske} Case 8 with $a=4$, a sextic with two singularities with multiplicity sequences $[4]$ and $[2,2,2,2]$. Repeating the construction from the previous paragraph for these two sextics proves the result for $K=2T_{2,5}$ and $K=T_{2,9}$, respectively.

Next, consider a quintic $C$ with four cusps $p_1,\ldots, p_4$, with $p_1$ of type $T_{2,7}$, and the other three of type $T_{2,3}$; see~\cite[Theorem~2.3.10]{namba} or~\cite[Section~6.1.4]{moe}.
Pick a generic point $p$ on $C$. Let $\ell_1$ be the line passing through $p$ and $p_1$, and let $\ell_2$ and $\ell_3$ be two generic lines passing through $p$. Smooth out one generic point of intersection of $\ell_i$ with $C$ for $i=1,2,3$, and then blow up at $p$, once at $p_1$, and at the other intersection points of $\ell_i$ with $C$. (There are three such points on each of $\ell_2$ and $\ell_3$, and one on $\ell_1$.)
In total, we have blown up at one quadruple point, $p$, a triple point, $p_1$, and seven double points.
By blowing up at either cusp we obtain the class $8h-4e_1-3e_2-2e_3-\dots-2e_{10}$: blowing up at $p_1$ again yields a curve with four $T_{2,3}$ cusps, while blowing up at $p_2$ yields one $T_{2,5}$ and two $T_{2,3}$ cusps. Blowing up at $7-n$ generic points shows that $\mathscr{D}_n$ fills $S_n(K)$ for $K=4 T_{2,3}$ and $K=2T_{2,3}\# T_{2,5}$.
\end{proof}

\subsection{Final remarks}\label{ss:finalrmks}

We conclude this section with some comments on the limitations of this approach.
One can try to use Lemma~\ref{lemma:double} to produce fillings of $S^3_2(K)$ and $S^3_3(K)$, where $K$ is either $T_{2,5}$ or $2T_{2,3}$, with intersection form $\Gamma_{12}\oplus\langle2\rangle\oplus\langle1\rangle$ and $\Gamma_{12}\oplus\langle3\rangle$, respectively;
while we have attempted to employ this strategy, we haven't succeeded.
We will focus on the case of $+3$-surgery and of $\Gamma_{12}\oplus\langle3\rangle$, the other case being analogous.
For one thing, the adjunction inequality shows that the homology class $8h-4e_1-2e_2-\dots-2e_{12}-e_{13}$ cannot be represented by a genus-2 surface, so it certainly cannot be represented by a PL sphere with a singularity of type $T_{2,5}$ or $2T_{2,3}$.
We have tried with several other classes, obtained from $8h-4e_1-2e_2-\dots-2e_{12}-e_{13}$ by applying reflection automorphisms of $I_{1,13}$;
some of them passed the adjunction formula test, but we were unable to produce singular complex curves with the required multiplicities at the singularities.


\vphantom{{\shortstack{1\\ 2\\ 3}}}
\begin{table}
\centering
\begin{tabular}{*{14}{>{\centering\arraybackslash}m{9.5mm}}}
 rk$\backslash$det  & 1 & 2 & 3 & 4 & 5  & 6 & 7 & 8 & 9 & 10 & 11\\
\hline\\
1 &  \textbf{--} & $A_1$  & $O_1$ & $O_1$ & $O_1$ & $O_1$ & $O_1$ & $O_1$ & $O_1$ & $O_1$ & \vspace{-.15cm}$O_1$ \vphantom{{\shortstack{1\\ 2\\ 3}}}\\[12pt]
2 &  \textbf{--} & \textbf{--}  &  $A_2$ & \textbf{--} & $A_1O_1$  &   \textbf{--}  & \framebox{$A_1O_1$} & $O_2$ & $A_1O_1$ & \textbf{--} & \vspace{.2cm}\shortstack{\framebox{$O_2$} \\ $A_1O_1$} \\[12pt]
3 &  \textbf{--} & \textbf{--}  & \textbf{--}  & $A_3$ & \textbf{--}  & \textbf{--} & $A_2O_1$ & $A_1^2O_1$ &   \textbf{--} & $A_2O_1$ & \vphantom{{\shortstack{1\\ 2\\ 3}}} \\[12pt]
4 &   \textbf{--} & \textbf{--}  & \textbf{--} & $D_4$ &  \framebox{$A_4$} & \textbf{--} & \textbf{--} & $A_3O_1$ & \framebox{$A_3O_1$} & \vphantom{{\shortstack{1\\ 2\\ 3}}} &  \\[12pt]
5 &  \textbf{--}  & \textbf{--}  & \textbf{--}  &  \framebox{$D_5$} & \textbf{--}  & $A_5$ & \textbf{--} & \framebox{$D_4O_1$} & \vphantom{{\shortstack{1\\ 2\\ 3}}}  & & \\[12pt]
6 &  \textbf{--} & \textbf{--}  & \framebox{$E_6$} & $D_6$ & \textbf{--}  & \textbf{--} &  \shortstack{\framebox{$D_5O_1$}\\ $A_6$}  & \vphantom{{\shortstack{1\\ 2\\ 3}}} &  &  & \\[12pt]
7 &  \textbf{--} & \framebox{$E_7$} & \textbf{--} & $D_7$ & $E_6O_1$  & \framebox{$D_6O_1$}   & \vphantom{{\shortstack{1\\ 2\\ 3}}}&& &  & \\[12pt]
8 &  \framebox{$E_8$} & \textbf{--}  & $E_7O_1$ & $D_8$ &  \shortstack{\framebox{$D_7O_1$} \\ $E_7O_1$}& \vphantom{{\shortstack{1\\ 2\\ 3}}} & &&& & \\[12pt]
9 & \textbf{--} & \textbf{--}  & \textbf{--} & {\shortstack{\framebox{$D_8O_1$}\\ $E_7A_1O_1$ \\ $D_9$}} & \vphantom{{\shortstack{1\\ 2\\ 3}}}  & & & & & & \\[12pt]
10 &  \textbf{--} & \textbf{--}  &  \framebox{$D_9O_1$} & \vphantom{{\shortstack{1\\ 2\\ 3}}}  & &  &   &&&  & \\[12pt]
11 & \textbf{--}   &  \framebox{$D_{10}A_1$} & \vphantom{{\shortstack{1\\ 2\\ 3}}} &  &   &  &&& &  & \\[12pt]
12 & \framebox{$D_{12}$} & \vphantom{{\shortstack{1\\ 2\\ 3}}}  &   &  &  &  &  &&& &
\end{tabular}

\caption{Reduced indecomposable positive definite integral lattices of low rank and determinant taken from \cite{cs-low}. A dash ``\textbf{--}'' indicates that there are no lattices of the associated rank and determinant. The 
lattices $\mathscr{T}_n$ and $\mathscr{C}_n$ have been boxed.}\label{fig:tableroots}
\end{table}

\bibliography{references}
\bibliographystyle{alpha}

\vspace{.4cm}

\end{document}